\newcommand{\nc}{\newcommand}
\nc{\fg}{\mathfrak{f} } \nc{\vg}{\mathfrak{v} } \nc{\wg}{\mathfrak{w} }
\nc{\zg}{\mathfrak{z} } \nc{\ngo}{\mathfrak{n} } \nc{\kg}{\mathfrak{k} }
\nc{\mg}{\mathfrak{m} } \nc{\bg}{\mathfrak{b} } \nc{\ggo}{\mathfrak{g} }
\nc{\ggob}{\overline{\mathfrak{g}} } \nc{\sog}{\mathfrak{so} }
\nc{\sug}{\mathfrak{su} } \nc{\spg}{\mathfrak{sp} } \nc{\slg}{\mathfrak{sl} }
\nc{\glg}{\mathfrak{gl} } \nc{\cg}{\mathfrak{c} } \nc{\rg}{\mathfrak{r} }
\nc{\hg}{\mathfrak{h} } \nc{\tg}{\mathfrak{t} } \nc{\ug}{\mathfrak{u} }
\nc{\dg}{\mathfrak{d} } \nc{\ag}{\mathfrak{a} } \nc{\pg}{\mathfrak{p} }
\nc{\sg}{\mathfrak{s} } \nc{\affg}{\mathfrak{aff} } \nc{\qg}{\mathfrak{q} } \nc{\lgo}{\mathfrak{l} }
\nc{\pca}{\mathcal{P}} \nc{\nca}{\mathcal{N}} \nc{\lca}{\mathcal{L}}
\nc{\oca}{\mathcal{O}} \nc{\mca}{\mathcal{M}} \nc{\tca}{\mathcal{T}}
\nc{\aca}{\mathcal{A}} \nc{\cca}{\mathcal{C}} \nc{\gca}{\mathcal{G}}
\nc{\sca}{\mathcal{S}} \nc{\hca}{\mathcal{H}} \nc{\bca}{\mathcal{B}}
\nc{\dca}{\mathcal{D}} \nc{\val}{\operatorname{val}}
\nc{\vp}{\varphi} \nc{\ddt}{\tfrac{d}{dt}} \nc{\dsdt}{\tfrac{d^2}{dt^2}} \nc{\dds}{\frac{d}{ds}}
\nc{\dpar}{\frac{\partial}{\partial t}} \nc{\im}{\mathrm{i}}
\nc{\SO}{\mathrm{SO}} \nc{\Spe}{\mathrm{Sp}} \nc{\Sl}{\mathrm{SL}}
\nc{\SU}{\mathrm{SU}} \nc{\Or}{\mathrm{O}} \nc{\U}{\mathrm{U}} \nc{\Gl}{\mathrm{GL}}
\nc{\Se}{\mathrm{S}} \nc{\Cl}{\mathrm{Cl}} \nc{\Spein}{\mathrm{Spin}}
\nc{\Pin}{\mathrm{Pin}} \nc{\G}{\mathrm{GL}_n(\RR)} \nc{\g}{\mathfrak{gl}_n(\RR)}
\nc{\RR}{{\Bbb R}} \nc{\HH}{{\Bbb H}} \nc{\CC}{{\Bbb C}} \nc{\ZZ}{{\Bbb Z}} \nc{\SSS}{{\Bbb S}}
\nc{\FF}{{\Bbb F}} \nc{\NN}{{\Bbb N}} \nc{\QQ}{{\Bbb Q}} \nc{\PP}{{\Bbb P}} \nc{\OO}{{\Bbb O}}
\nc{\vs}{\vspace{.2cm}} \nc{\vsp}{\vspace{1cm}} \nc{\ip}{\langle\cdot,\cdot\rangle}
\nc{\ipp}{(\cdot,\cdot)} \nc{\la}{\langle} \nc{\ra}{\rangle} \nc{\unm}{\tfrac{1}{2}}
\nc{\unc}{\tfrac{1}{4}} \nc{\und}{\tfrac{1}{16}} \nc{\no}{\vs\noindent}
\nc{\lam}{\Lambda^2(\RR^n)^*\otimes\RR^n} \nc{\tangz}{{\rm T}^{\rm Zar}}
\nc{\nor}{{\sf n}}  \nc{\mum}{/\!\!/} \nc{\kir}{/\!\!/\!\!/}
\nc{\Ri}{\tfrac{4\Ric_{\mu}}{||\mu||^2}} \nc{\ds}{\displaystyle}
\nc{\ben}{\begin{enumerate}} \nc{\een}{\end{enumerate}} \nc{\f}{\frac}
\nc{\lb}{[\cdot,\cdot]} \nc{\isn}{\tfrac{1}{||v||^2}}
\nc{\gkp}{(\ggo=\kg\oplus\pg,\ip)} \nc{\ukh}{(\ug=\kg\oplus\hg,\ip)}
\nc{\tgkp}{(\tilde{\ggo}=\kg\oplus\pg,\ip)}
\nc{\wt}{\widetilde}
\nc{\iop}{\mathtt{i}} \nc{\jop}{\mathtt{j}}
\nc{\Hess}{\operatorname{Hess}} \nc{\ad}{\operatorname{ad}}
\nc{\Ad}{\operatorname{Ad}} \nc{\rank}{\operatorname{rk}}
\nc{\Irr}{\operatorname{Irr}} \nc{\End}{\operatorname{End}}
\nc{\Aut}{\operatorname{Aut}} \nc{\Inn}{\operatorname{Inn}}
\nc{\Der}{\operatorname{Der}} \nc{\Ker}{\operatorname{Ker}}
\nc{\Iso}{\operatorname{Iso}} \nc{\Diff}{\operatorname{Diff}}
\nc{\Lie}{\operatorname{L}} \nc{\tr}{\operatorname{tr}} \nc{\dif}{\operatorname{d}}
\nc{\sen}{\operatorname{sen}} \nc{\modu}{\operatorname{mod}}
\nc{\CRic}{\operatorname{PP}} \nc{\Cric}{\operatorname{P}} \nc{\Ricci}{\operatorname{Ric}}
\nc{\sym}{\operatorname{sym}} \nc{\herm}{\operatorname{herm}} \nc{\symac}{\operatorname{sym^{ac}}}
\nc{\symc}{\operatorname{sym^{c}}} \nc{\scalar}{\operatorname{Sc}}
\nc{\grad}{\operatorname{grad}} \nc{\ricci}{\operatorname{Rc}} \nc{\kil}{\operatorname{B}} \nc{\cas}{\operatorname{C}}
\nc{\Nor}{\operatorname{Norm}}  \nc{\ricc}{\operatorname{Rc^{c}}}
\nc{\Ricc}{\operatorname{Ric^{c}}} \nc{\ricac}{\operatorname{Rc^{ac}}}
\nc{\Ricac}{\operatorname{Ric^{ac}}} \nc{\Riem}{\operatorname{Rm}} \nc{\Sec}{\operatorname{Sec}}
\nc{\riccig}{\operatorname{ric^{\gamma}}} \nc{\mm}{\operatorname{m}} \nc{\Mm}{\operatorname{M}}
\nc{\Le}{\operatorname{L}} \nc{\tang}{\operatorname{T}}
\nc{\level}{\operatorname{level}} \nc{\rad}{\operatorname{r}}
\nc{\abel}{\operatorname{ab}} \nc{\CH}{\operatorname{CH}} \nc{\Cone}{{\mathcal C}} \nc{\CCone}{\operatorname{CC}} \nc{\CP}{{\mathcal P}}
\nc{\mcc}{\operatorname{mcc}} \nc{\Adj}{\operatorname{Adj}}
\nc{\Order}{\operatorname{O}}  \nc{\inj}{\operatorname{inj}} \nc{\proy}{\operatorname{pr}}
\nc{\vol}{\operatorname{vol}} \nc{\Diag}{\operatorname{Dg}} \nc{\Diagg}{\operatorname{Diag}}
\nc{\Spec}{\operatorname{Spec}} \nc{\Ima}{\operatorname{Im}} \nc{\Rea}{\operatorname{Re}}
\nc{\spann}{\operatorname{span}} \nc{\Aff}{\operatorname{Aff}} \nc{\E}{\operatorname{E}} \nc{\id}{\operatorname{id}} \nc{\dete}{\operatorname{det}} \nc{\Crit}{\operatorname{Crit}}
\theoremstyle{plain}
\newtheorem{theorem}{Theorem}[section]
\newtheorem{proposition}[theorem]{Proposition}
\newtheorem{corollary}[theorem]{Corollary}
\newtheorem{lemma}[theorem]{Lemma}
\theoremstyle{definition}
\newtheorem{definition}[theorem]{Definition}
\theoremstyle{remark}
\newtheorem{remark}[theorem]{Remark}
\newtheorem{example}[theorem]{Example}
\title{Prescribing Ricci curvature on homogeneous spaces}
\author{Jorge Lauret} \author{Cynthia E. Will}
\address{FaMAF, Universidad Nacional de C\'ordoba and CIEM, CONICET (Argentina)}
\email{jorgelauret@unc.edu.ar} \email{cynthia.will@unc.edu.ar}
\thanks{This research was partially supported by a grant from Univ. Nac. de C\'ordoba (Argentina).}
\begin{document}

\maketitle

\begin{abstract}
The prescribed Ricci curvature problem in the context of $G$-invariant metrics on a homogeneous space $M=G/K$ is studied.  We focus on the metrics at which the map $g\mapsto\ricci(g)$ is, locally, as injective and surjective as it can be.  Our main result is that such property is generic in the compact case.  Our main tool is a formula for the Lichnerowicz Laplacian we prove in terms of the moment map for the variety of algebras.     
\end{abstract}

\tableofcontents

\section{Introduction}\label{intro}

The prescribed Ricci curvature problem on a differentiable manifold $M$ is a classical problem in geometric analysis (see e.g.\ \cite[Chapter 5]{Bss}):  given a symmetric $2$-tensor $T$, one asks about the existence and uniqueness of a Riemannian metric $g$ (up to scaling) and a constant $c>0$ such that
\begin{equation}\label{PRP-intro}
\ricci(g) = cT, 
\end{equation}
where $\ricci(g)$ denotes the Ricci tensor of $g$.  The role of this in principle auxiliary constant $c$ is to compensate the scaling invariance of $\ricci$, however, the possibility of having solutions with different constants for the same $T$ raises a subtle problem.  For a compact $M$, the set of such constants for a fixed $T>0$ is bounded above (see \cite{DtrKso}).  

There are many classical results on the prescribed Ricci curvature problem (see the recent survey \cite{BttPlm}), we found the following one particularly inspiring.  It was first proved by DeTurck and then independently by Hamilton using different methods.  

\begin{theorem}\cite{Dtr1}, \cite[Theorem 5.1]{Hml}.  
Let $\overline{g}$ be a metric on $\SSS^n$ of constant curvature $+1$, so $\ricci(\overline{g})=\overline{g}$.  Then the image of a neighborhood of $\overline{g}$ under the map $\ricci$ is a submanifold of codimension one in a neighborhood of $\overline{g}$.  For every $T$ near $\overline{g}$ there exists a unique constant $c$ such that $\ricci(g)=cT$ for some $g$ near $\overline{g}$, and g is also the unique solution in the
neighborhood of $\overline{g}$ if we normalize the volume so $V(g) = V(\overline{g})$.
\end{theorem}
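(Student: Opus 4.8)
The plan is to carry out DeTurck's argument (Hamilton's alternative applies the Nash--Moser inverse function theorem directly to $\ricci$, using the same curvature input). The obstruction to inverting $g\mapsto\ricci(g)$ near $\overline{g}$ is that this map is $\Diff(\SSS^n)$-equivariant and scale-invariant, hence \emph{not elliptic}: its principal symbol degenerates along the gauge directions, the range of $D\ricci_{\overline{g}}$ is constrained by the linearized contracted second Bianchi identity, and $D\ricci_{\overline{g}}$ restricts to the identity on the infinite-dimensional space of Lie derivatives $\{\mathcal{L}_X\overline{g}\}$. To break the gauge I would pass to the DeTurck-modified operator $\widehat{\ricci}(g):=\ricci(g)-\tfrac12\mathcal{L}_{V(g)}g$, where $V(g)^k=g^{pq}\bigl(\Gamma(g)^k_{pq}-\Gamma(\overline{g})^k_{pq}\bigr)$ is the DeTurck vector field relative to the fixed background $\overline{g}$ (so $V(\overline{g})=0$). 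Then $\widehat{\ricci}$ is a quasilinear second-order \emph{elliptic} operator, still scale-invariant, with $\widehat{\ricci}(\overline{g})=\ricci(\overline{g})=\overline{g}$, and its linearization at $\overline{g}$ is $D\widehat{\ricci}_{\overline{g}}=\tfrac12\Delta_L$, where $\Delta_L$ is the Lichnerowicz Laplacian (normalized so that $\Delta_L\overline{g}=0$); this operator is elliptic and formally self-adjoint, hence Fredholm of index $0$ between suitable H\"older spaces of symmetric $2$-tensors. Since the isometry group $\Or(n+1)$ of $\overline{g}$ is compact, Ebin's slice theorem applies, and combined with the contracted Bianchi identity it lets one move between the two equations: after composing $T$ with a diffeomorphism close to the identity — which affects none of the assertions of the theorem — solving $\ricci(g)=cT$ near $\overline{g}$ becomes solving the elliptic equation $\widehat{\ricci}(g)=cT$ near $\overline{g}$.

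The essential geometric input is the kernel of $\Delta_L$ on the round sphere. As $\SSS^n$ is Einstein, $\Delta_L$ preserves the splitting of symmetric $2$-tensors into pure-trace tensors $f\overline{g}$ and trace-free tensors; a direct computation gives $\Delta_L(f\overline{g})=(\nabla^*\nabla f)\overline{g}$, whose kernel is exactly $\RR\overline{g}$, while on trace-free tensors the constancy of the sectional curvature collapses the curvature term of $\Delta_L$ to a constant, $\Delta_L=\nabla^*\nabla+\tfrac{2n}{n-1}\,\id$ (using $\ricci(\overline{g})=\overline{g}$), which is positive definite and hence has no kernel. Therefore $\ker\Delta_L=\RR\overline{g}$ is one-dimensional, and by ellipticity and self-adjointness the cokernel of $\tfrac12\Delta_L$ is also $\RR\overline{g}$; its image is the closed codimension-one subspace $\mathcal{H}=(\RR\overline{g})^{\perp}=\{k:\int_{\SSS^n}\operatorname{tr}_{\overline{g}}k\;dV_{\overline{g}}=0\}$.

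To conclude I would apply the implicit function theorem to $\Phi(g,c,T):=\widehat{\ricci}(g)-cT$ near $(\overline{g},1,\overline{g})$, where $\Phi(\overline{g},1,\overline{g})=0$. Its partial differential in $(g,c)$ there is $(h,c')\mapsto\tfrac12\Delta_L h-c'\overline{g}$; since $\overline{g}\notin\operatorname{Im}\Delta_L=\mathcal{H}$, this map is onto $\mathcal{H}\oplus\RR\overline{g}$, the whole space, with kernel $\RR\overline{g}\times\{0\}$ — precisely the scaling direction. Splitting off this one-dimensional kernel by imposing the volume normalization $V(g)=V(\overline{g})$ (transverse to scaling), the implicit function theorem produces, for every $T$ near $\overline{g}$, a unique pair $(g,c)$ near $(\overline{g},1)$ with $\widehat{\ricci}(g)=cT$ and $V(g)=V(\overline{g})$, depending smoothly on $T$; dropping the normalization restores the one-parameter family $\{\lambda g\}$. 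Undoing the gauge of the first paragraph — and using that $\widehat{\ricci}(g)=\ricci(g)$ on solutions lying in the correct gauge — this gives that $\ricci$ maps a neighborhood of $\overline{g}$ onto a codimension-one submanifold through $\overline{g}$ (the $\Diff(\SSS^n)$-saturation of the codimension-one image of a slice), that $c$ is uniquely determined by $T$, and that $g$ is the unique nearby solution once its volume is normalized to $V(\overline{g})$.

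The step I expect to be the main obstacle is the gauge-fixing itself: $\ricci$ is genuinely non-elliptic, so everything rests on the first paragraph — making the DeTurck modification precise and, above all, checking on the closed manifold $\SSS^n$ (via the contracted Bianchi identity) that solutions of the elliptic equation $\widehat{\ricci}(g)=cT$ are honest solutions of $\ricci(g)=cT$, rather than merely gauge-equivalent ones. The computation $\ker\Delta_L=\RR\overline{g}$, which encodes the rigidity of the round metric, is then what forces the codimension to be exactly one and the constant $c$ to be unique.
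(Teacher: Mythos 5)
First, a point of reference: the paper does not prove this statement at all --- it is quoted from DeTurck \cite{Dtr1} and Hamilton \cite[Theorem 5.1]{Hml} as motivation for Definition \ref{RLI-hom}, so there is no in-paper proof to compare you against. Judged on its own terms, your outline is a faithful reconstruction of DeTurck's elliptic route: the gauge obstruction to inverting $\ricci$, the DeTurck modification whose linearization at the Einstein metric is $\unm\Delta_L$, the computation $\Ker\Delta_L=\RR\overline{g}$ on the round sphere (your constants check out: on trace-free tensors the curvature term contributes $+\tfrac{2n}{n-1}$, so $\Delta_L$ is positive there, while on the pure-trace part the kernel is the constants), and the implicit function theorem for $(g,c)\mapsto\widehat{\ricci}(g)-cT$ with the volume normalization removing the scaling direction. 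It is worth observing that the present paper never needs this analytic machinery: in the homogeneous setting $d\ricci|_g$ acts on the finite-dimensional space $\sca^2(M)^G$, the identity $d\ricci|_g=\unm\Delta_L$ there is obtained algebraically via the moment map (Corollary \ref{LL}), and the role of $\Ker\Delta_L=\RR\overline{g}$ is played by the condition $\Ker d\ricci|_g=\RR g$ in Theorem \ref{RLI-thm}.

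The genuine gap is the one you flag yourself, and it bites harder for uniqueness than for existence. The implicit function theorem gives existence and uniqueness for the gauge-fixed equation $\widehat{\ricci}(g)=cT$. To recover existence for $\ricci(g)=cT$ you must show that the solution satisfies $V(g)=0$; this is done by applying the Bianchi operator $\beta_g=\delta_g+\unm d\,\tr_g$ to the equation and invoking a vanishing theorem for the resulting elliptic system in $V$, an argument you name but do not carry out. For the uniqueness assertion the situation is worse: a second solution $g'$ of $\ricci(g')=c'T$ near $\overline{g}$ need not lie in the Bianchi gauge relative to $\overline{g}$, and pulling it back by a diffeomorphism $\varphi$ to restore the gauge replaces $T$ by $\varphi^*T$, so you end up comparing solutions of \emph{different} gauge-fixed equations and the IFT uniqueness no longer applies. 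This is precisely why Hamilton's proof of Theorem 5.1 runs the Nash--Moser inverse function theorem directly on the unmodified, non-elliptic map $(g,c)\mapsto\left(c\,\ricci(g),V(g)\right)$ (cf.\ the remark following the definition of $\widetilde{\ricci}$ in Section \ref{preric-sec}): that route yields local injectivity of $\ricci$ itself, not merely of its gauge-fixed cousin. Your sketch needs either that, or an explicit argument that every nearby solution of $\ricci(g)=cT$ can be normalized into the gauge without changing $T$; neither is supplied.
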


This was later proved for any irreducible symmetric space of compact type in \cite{Dtr2}, actually for any Einstein metric $g$ on a compact $M$ whose Lichnerowicz Laplacian has a one-dimensional kernel (e.g., if $g$ is de Rham irreducible and has nonnegative sectional curvature).  It is also satisfied by some non-compact manifolds, including the real and complex hyperbolic spaces (see \cite{Dly}).  

In this paper, we study the prescribed Ricci curvature problem in a homogeneous context.  More precisely, we fix a homogeneous manifold $M$ and a Lie group $G$ acting transitively on $M$ (or equivalently, a homogeneous space $M=G/K$, where $K$ is the isotropy subgroup of $G$ at a point $o\in M$) and consider equation \eqref{PRP-intro} for a $G$-invariant tensor $T$ and metric $g$.  The question is therefore about the image and the injectivity (up to scaling) of the function
$$
\ricci:\mca^G\longrightarrow\sca^2(M)^G,  
$$
where $\sca^2(M)^G$ and $\mca^G$ respectively denote the finite-dimensional vector space of all $G$-invariant symmetric $2$-tensors and the open subset of all $G$-invariant metrics on $M$.  By fixing a background metric $\overline{g}\in\mca^G$, the number $\dete_{\overline{g}}{g}$ plays the same role as the volume of $g\in\mca^G$ beyond the compact case.  

We aim to understand the plausibility of the following concept, whose definition is strongly motivated by the above theorem.  

\begin{definition} (See Definition \ref{RLI-hom}). 
A metric $g_0\in\mca^G$ is said to be {\it Ricci locally invertible} if there exist an open neighborhood $U_1$ of $g_0$ in the submanifold 
$$
\mca_0^G:=\{ g\in\mca^G:\dete_{\overline{g}}{g}=\dete_{\overline{g}}{g_0}\}
$$ 
and an open neighborhood $V$ of $\ricci(g_0)$ in $\sca^2(M)^G$, such that the following conditions hold: 

\begin{enumerate}[{\rm (a)}]
\item $\ricci(U_1)$ is a submanifold of codimension one of $V$ and $\ricci:U_1\rightarrow\ricci(U_1)$ is a diffeomorphism; 

\item for any $T\in V$, there exists a unique pair $(g,c)$ with $g\in U_1$ and $c>0$ such that $\ricci(g)=cT$.   
\end{enumerate} 
\end{definition}

In other words, near $g_0$, the function $\ricci$ is as invertible as it can be, and for any $T$ near $\ricci(g_0)$, existence and (local) uniqueness of solutions to the prescribed Ricci curvature problem \eqref{PRP-intro} are guaranteed.   
The subset 
$$
\mca^G_{inv}:=\left\{ g\in\mca^G:g\;\mbox{is Ricci locally invertible}\right\}
$$
is open and $\Aut(G/K)$-invariant; furthermore, the subset $\RR_ +\ricci(\mca^G_{inv})$ is open in $\sca^2(M)^G$ by condition (b).  Regarding the study of equation \eqref{PRP-intro}, the question of whether $\mca^G_{inv}$ is dense in $\mca^G$ arises.  It follows from condition (a) that, 
$$
\mca^G_{inv}\subset\mca^G_{\ricci}:=\left\{ g\in\mca^G:\Ker d\ricci|_g=\RR g\right\},  
$$
the set of maximal rank metrics.  It is also natural to wonder about the relationship between Ricci local invertibility and the invertibility of the map
$$
\widetilde{\ricci}:\mca^G\longrightarrow\sca^2(M)^G, \qquad \widetilde{\ricci}(g):=(\dete_{\overline{g}}{g})\ricci(g),
$$
breaking the scaling invariance of $\ricci$.  The subset of metrics
$$ 
\mca^G_{\widetilde{\ricci}}:=\left\{ g\in\mca^G:\widetilde{\ricci}\; \mbox{is a local diffeomorphism at}\; g\right\}
$$
appears as more tractable than $\mca^G_{inv}$.  Since $\ricci(g)$ is a rational function in the coordinates of $g\in\mca^G$, both subsets $\mca^G_{\widetilde{\ricci}}$ and $\mca^G_{\ricci}$ are either empty or open and dense in $\mca^G$, and the subset 
$$
\mca^G_{\scalar}:=\left\{ g\in\mca^G:\scalar(g)\ne 0\right\}
$$
is always open and dense unless $G$ is abelian (see Lemma \ref{RLI-lem}), where $\scalar(g)$ is the scalar curvature of $g$.  

We assume from now on that $G$ is unimodular.  The main results of the present paper are described by the following five theorems.    

\begin{theorem} (See Theorem \ref{RLI-thm}).  
$\mca^G_{\widetilde{\ricci}}=\mca^G_{\ricci}\cap\mca^G_{\scalar}\subset\mca^G_{inv}\subset \mca^G_{\ricci}$.  Moreover, if $\mca^G_{\ricci}$ is nonempty, then $\mca^G_{inv}$ is so.  
\end{theorem}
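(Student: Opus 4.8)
The plan is to unwind the four-way chain of set relations one inclusion at a time, using the local structure of $\ricci$ and $\widetilde{\ricci}$ near a fixed metric $g$, together with the scaling behaviour $\ricci(tg)=\ricci(g)$. First I would record the basic differential-geometric facts: along the ray $t\mapsto tg$ the Ricci tensor is constant, hence $g\in\Ker d\ricci|_g$ always, so $\Ker d\ricci|_g\supseteq\RR g$ and $\mca^G_{\ricci}$ is exactly the locus where this inclusion is an equality. Differentiating $\widetilde{\ricci}(g)=(\dete_{\overline{g}}{g})\ricci(g)$ and using that the function $g\mapsto\dete_{\overline{g}}{g}$ has nonzero differential everywhere (its differential at $g$ in direction $h$ is $(\dete_{\overline{g}}{g})\,\tr(g^{-1}h)$), I would compute that $d\widetilde{\ricci}|_g(h) = (\dete_{\overline{g}}{g})\big(\tr(g^{-1}h)\,\ricci(g) + d\ricci|_g(h)\big)$; the point is that, because $\scalar(g)=\tr(g^{-1}\ricci(g))$, the two summands "see" complementary directions. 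This is the computational heart and I expect it to be the main obstacle: one must show that $d\widetilde{\ricci}|_g$ is invertible precisely when $\Ker d\ricci|_g=\RR g$ \emph{and} $\scalar(g)\ne 0$, i.e.\ that the rank-one correction $h\mapsto\tr(g^{-1}h)\ricci(g)$ exactly compensates for the one-dimensional kernel $\RR g$ of $d\ricci|_g$, which forces the scalar curvature to enter.

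Granting the identity $\mca^G_{\widetilde{\ricci}}=\mca^G_{\ricci}\cap\mca^G_{\scalar}$, the inclusion $\mca^G_{\widetilde{\ricci}}\subset\mca^G_{inv}$ would be proved by the inverse function theorem applied to $\widetilde{\ricci}$: if $\widetilde{\ricci}$ is a local diffeomorphism at $g_0$, it carries a neighborhood of $g_0$ diffeomorphically onto an open set in $\sca^2(M)^G$; I would then intersect with the level hypersurface $\mca^G_0$ and show that $\ricci$ restricted to $U_1:=U\cap\mca^G_0$ is a diffeomorphism onto a codimension-one submanifold of a neighborhood $V$ of $\ricci(g_0)$, which is condition (a). For condition (b), given $T$ near $\ricci(g_0)$, solving $\ricci(g)=cT$ with $g\in U_1$ amounts to finding the unique point of the ray $\RR_+T$ lying in $\widetilde{\ricci}(U_1)$ after rescaling $T$ to the appropriate determinant level; uniqueness follows from the injectivity of $\widetilde{\ricci}$ on $U$ and the fact that each ray $\RR_+ g$ meets $\mca^G_0$ in exactly one point. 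The inclusion $\mca^G_{inv}\subset\mca^G_{\ricci}$ is immediate from condition (a): a diffeomorphism onto a codimension-one submanifold of the ambient space forces $d\ricci|_g$ to have rank exactly $\dim\mca^G - 1$, and since $\RR g$ is always in the kernel, the kernel must be exactly $\RR g$.

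Finally, for the last sentence — if $\mca^G_{\ricci}\ne\emptyset$ then $\mca^G_{inv}\ne\emptyset$ — I would invoke the rationality of $g\mapsto\ricci(g)$: by Lemma \ref{RLI-lem}, $\mca^G_{\ricci}$ and $\mca^G_{\scalar}$ are each either empty or open and dense (the latter because $G$ is nonabelian, which is automatic here since $\mca^G_{\ricci}\ne\emptyset$ would be impossible for abelian $G$), so if $\mca^G_{\ricci}$ is nonempty then both are open and dense, hence their intersection $\mca^G_{\widetilde{\ricci}}=\mca^G_{\ricci}\cap\mca^G_{\scalar}$ is open and dense and in particular nonempty; by the chain just established, $\mca^G_{inv}\supseteq\mca^G_{\widetilde{\ricci}}\ne\emptyset$. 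The one point requiring care is the claim that $\scalar\not\equiv 0$ on $\mca^G_{\ricci}$ when the latter is nonempty; I would argue this by noting that a metric with $d\ricci|_g$ of maximal rank and $\scalar(g)=0$ is non-generic among maximal-rank metrics (the scalar curvature is a nonconstant rational function once $G$ is nonabelian), so density of $\mca^G_{\scalar}$ guarantees the intersection is nonempty.
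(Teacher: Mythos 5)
Your overall route coincides with the paper's: differentiate $\widetilde{\ricci}$, characterize when $d\widetilde{\ricci}|_g$ is invertible, pass from a local diffeomorphism of $\widetilde{\ricci}$ to Ricci local invertibility by slicing with the determinant level set, and get nonemptiness of $\mca^G_{inv}$ from the rationality argument of Lemma \ref{RLI-lem}. However, the step you yourself single out as ``the computational heart'' is left as a heuristic, and it does not follow from the rank-one-perturbation structure alone. Writing $d\widetilde{\ricci}|_g h=(\dete_{\overline{g}}{g})\bigl(\tr(g^{-1}h)\,\ricci(g)+d\ricci|_g h\bigr)$, the general linear-algebra fact is: this operator is invertible if and only if $\Ker d\ricci|_g=\RR g$ \emph{and} $\ricci(g)\notin\operatorname{Im} d\ricci|_g$. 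To convert the second condition into $\scalar(g)\ne 0$ you need the identity $\operatorname{Im} d\ricci|_g\perp_g\RR g$, which is the paper's Corollary \ref{imort} and is exactly where unimodularity of $G$ enters: it is proved from the moment-map identity $\la d\overline{\Ricci}|_IA,I\ra=-\la\theta(A)\mu_\pg,\mu_\pg\ra+2\la\Mm_{\mu_\pg},A\ra=0$ (equivalently, one can differentiate $\scalar$ and use $\grad(\scalar)_g=-\ricci(g)$). Once you have it, $\Ker d\ricci|_g=\RR g$ forces $\operatorname{Im} d\ricci|_g=(\RR g)^{\perp_g}$ by dimension count, so $\ricci(g)\notin\operatorname{Im} d\ricci|_g$ becomes $\la\ricci(g),g\ra_g=\scalar(g)\ne 0$. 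Your phrase ``the two summands see complementary directions'' is precisely this assertion; without Corollary \ref{imort} it is unproved, and with it the equality $\mca^G_{\widetilde{\ricci}}=\mca^G_{\ricci}\cap\mca^G_{\scalar}$ is a short verification.

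A second gap is the uniqueness half of condition (b) in the inclusion $\mca^G_{\widetilde{\ricci}}\subset\mca^G_{inv}$. If $(g_1,c_1)$ and $(g_2,c_2)$ both solve $\ricci(g_i)=c_iT$ with $g_i\in U_1\subset\mca^G_1$, then $\widetilde{\ricci}(g_1)=\widetilde{\ricci}(a g_2)$ with $a=(c_1/c_2)^{1/n}$, but nothing yet guarantees that $a$ is close to $1$, so $ag_2$ may escape the neighborhood on which $\widetilde{\ricci}$ is injective; your appeal to each ray meeting $\mca^G_0$ in one point concerns the domain and does not close this. The paper handles it by shrinking the neighborhoods and arguing by contradiction with sequences $g_k,g_k'\to g_0$ in $\mca^G_1$ satisfying $\ricci(g_k')=c_k\ricci(g_k)$, showing $c_k\to 1$ and only then invoking local injectivity of $\widetilde{\ricci}$. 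Finally, a small caveat on the last sentence: your claim that $\mca^G_{\ricci}\ne\emptyset$ forces $G$ nonabelian fails in the degenerate case $\dim\sca^2(M)^G=1$ with $G$ abelian (there $\ricci\equiv 0$ and $\Ker d\ricci|_g=\RR g$ trivially); the precise statement, Theorem \ref{RLI-thm}\,(iv), assumes $G$ nonabelian, and that hypothesis is what makes $\mca^G_{\scalar}$ open and dense via Lemma \ref{RLI-lem}.
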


This theorem implies that $\mca^G_{inv}$ is either empty or open and dense in $\mca^G$.  What is most likely?  Note that, consequently, the open subset $\RR_+\ricci(\mca^G_{inv})=\widetilde{\ricci}(\mca^G_{inv})$ is also either empty or dense in $\RR_+\ricci(\mca^G)=\widetilde{\ricci}(\mca^G)$, the set of \eqref{PRP-intro}-solvable $T$'s.   
    
If $M=G_1/K_1\times G_2/K_2$, $G=G_1\times G_2$ and the isotropy representation of $G_1/K_1$ does not contain the trivial subrepresentation, then $\mca^G_{inv}$ is empty.  This condition is not even necessary, we found that $\mca^G_{inv}$ is also empty for $M=\SSS^5\times\SSS^1$, $G=\SU(3)\times S^1$.  However, our second main result shows that emptynness of $\mca^G_{inv}$ is less common than one may expect.   

\begin{theorem} (See Theorem \ref{dRic-thm2}). 
Let $g\in\mca^G$ be a naturally reductive metric with respect to $G$.  If $g$ is holonomy irreducible, then $g\in\mca^G_{\ricci}$.  In particular, $g$ is Ricci locally invertible if in addition $\scalar(g)\ne 0$.  
\end{theorem}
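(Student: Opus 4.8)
The plan is to show that for a naturally reductive metric $g$, the kernel of $d\ricci|_g$ acting on $\sca^2(M)^G$ is controlled by the Lichnerowicz Laplacian $\Delta_L$ of $g$, and then to invoke the classical fact that on a compact homogeneous Einstein... — wait, $g$ need not be Einstein here, so I must be more careful. The right strategy is: first, recall the general linearization formula $d\ricci|_g = \tfrac12\Delta_L - \delta^*(\text{Bianchi terms})$, but when restricted to $G$-invariant tensors the divergence/Bianchi corrections can be absorbed, and one is left (modulo the scaling direction $\RR g$, which always lies in the kernel) with an operator whose $G$-invariant kernel one must compute. For naturally reductive metrics, the key input should be the paper's announced formula for $\Delta_L$ in terms of the moment map for the variety of algebras (mentioned in the abstract); I would apply that formula to reduce the computation of $\Ker d\ricci|_g$ on $\sca^2(M)^G$ to a statement about the moment map / stratification at the point representing the naturally reductive Lie bracket.

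The key steps, in order: (1) Fix the reductive decomposition $\ggo=\kg\oplus\pg$ adapted to $g$ and express $\sca^2(M)^G \cong (\Sym^2\pg^*)^K$; recall that naturally reductive means the $\pg$-component of the bracket is skew, which makes the Levi-Civita connection and curvature especially tractable. (2) Write down $d\ricci|_g$ on $(\Sym^2\pg^*)^K$ using the Lichnerowicz-Laplacian formula from the earlier part of the paper, identifying $\Delta_L$ with an explicit symmetric endomorphism built from the moment map $\mm$ at the bracket $\mu$ of $\ggo$. (3) Show that the $G$-invariant kernel of this operator is exactly $\RR g$. Here is where holonomy irreducibility enters: by the holonomy principle, $G$-invariant parallel symmetric $2$-tensors correspond to the commutant of the holonomy representation on $\pg$, so holonomy irreducibility forces the space of parallel $G$-invariant symmetric $2$-tensors to be exactly $\RR g$; combined with a positivity/ellipticity statement for the relevant part of $\Delta_L$ (coming from the moment-map formula, the non-parallel directions contribute strictly), one concludes $\Ker d\ricci|_g \cap \sca^2(M)^G = \RR g$, i.e.\ $g\in\mca^G_\ricci$. (4) The ``in particular'' is then immediate: if moreover $\scalar(g)\ne0$, then $g\in\mca^G_\ricci\cap\mca^G_\scalar=\mca^G_{\widetilde\ricci}\subset\mca^G_{inv}$ by the first main theorem.

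The hard part will be Step (3) — precisely, showing that the only $G$-invariant directions on which the (suitably reduced) Lichnerowicz-type operator $d\ricci|_g$ vanishes are the parallel ones, and hence just $\RR g$. On a compact homogeneous space $\Delta_L$ is a nonnegative operator on transverse-traceless tensors only under curvature hypotheses, so one cannot simply quote nonnegativity; instead I expect the proof to exploit the special algebraic structure of naturally reductive brackets so that the moment-map formula for $\Delta_L$ becomes manifestly positive-definite off the parallel subspace, with holonomy irreducibility pinning that subspace down to $\RR g$. A secondary technical point is handling the Bianchi/divergence correction terms in the linearization of $\ricci$; on a general manifold these obstruct identifying $\ker d\ricci$ with $\ker\Delta_L$, but in the $G$-invariant naturally reductive setting one should verify they do not enlarge the invariant kernel (e.g.\ by showing any invariant tensor in the kernel is transverse-traceless up to the scaling factor, using unimodularity of $G$).
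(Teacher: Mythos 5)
Your plan matches the paper's proof: the paper shows (Lemma \ref{dRic}) that for a naturally reductive metric $d\overline{\Ricci}|_I=\unm\cas_\pg$, where $\cas_\pg(A)=-\sum[\ad_\pg X_i,[\ad_\pg X_i,A]]$ is manifestly nonnegative with kernel the symmetric commutant of $\ad\kg|_\pg+\ad_\pg\pg$ (which generates the holonomy algebra, since the canonical connection coincides with Levi--Civita here), so holonomy irreducibility gives $\Ker d\ricci|_g=\RR g$ and Theorem \ref{RLI-thm} finishes the argument --- exactly your steps (2)--(4). Your worry about Bianchi/divergence corrections is moot in the paper's treatment, since $d\ricci|_g$ is computed directly on invariant tensors via the moment map (Lemma \ref{dRicmu}) rather than by passing through $\Delta_L$, which also avoids any compactness assumption.
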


This can be applied to the so-called D'Atri-Ziller metrics.  Given a compact Lie group $M=H=G/\Delta K$, where $G=H\times K$ and $K\subset H$ is a nontrivial closed Lie subgroup, $\mca^G$ is identified with the set of all left-invariant metrics on $H$ which are in addition $\Ad(K)$-invariant.  D'Atri and Ziller \cite{DtrZll} proved that any $g\in\mca^G$ is naturally reductive with respect to $G$.  Moreover, they showed that if $\kg$ can not be decomposed as $\kg=\kg\cap\hg_1\oplus\kg\cap\hg_2$ for nonzero ideals $\hg_1$ and $\hg_2$ of $\hg$ such that $\hg=\hg_1\oplus\hg_2$ (e.g., if $H$ is simple), then they are all holonomy irreducible and hence $\mca^G=\mca^G_{\ricci}$ and $\mca^G_{\scalar}\subset\mca^G_{inv}$ by the above theorem.  In particular, $\RR_+\ricci(\mca^G)\cap\mca^G$ is open. 

In Section \ref{natred-sec}, departing from classical results due to Kostant \cite{Kst2} and D'Atri and Ziller \cite{DtrZll} on the geometry and algebraic aspects of naturally reductive spaces, we prove some extra technical properties that allows us to conclude that $\mca^G_{inv}$ is nonempty for most compact homogeneous spaces.    

\begin{theorem} (See Theorem \ref{cor-dense}). 
Assume that a homogeneous space $M=G/K$ admits a naturally reductive metric with respect to $G$ (e.g., if $G$ is compact).  If there exist no nonzero ideals $\ggo_1$ and $\ggo_2$ of $\ggo$ such that $\ggo=\ggo_1\oplus\ggo_2$ and $\kg=\kg\cap\ggo_1\oplus\kg\cap\ggo_2$ (e.g., if $\ggo$ is indecomposable), then $\mca^G_{inv}$ is open and dense in $\mca^G$.  
\end{theorem}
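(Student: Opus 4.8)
The plan is to use Theorems \ref{RLI-thm} and \ref{dRic-thm2} to reduce the statement to producing a single naturally reductive metric on $M=G/K$ with irreducible holonomy, and then to obtain such a metric by showing that any nontrivial de Rham splitting of a naturally reductive metric is matched by a splitting of the pair $(\ggo,\kg)$ of the type ruled out by hypothesis. For the reduction, recall that $\mca^G_{inv}$ is open and that, by Theorem \ref{RLI-thm}, $\mca^G_{\widetilde{\ricci}}=\mca^G_\ricci\cap\mca^G_\scalar\subseteq\mca^G_{inv}\subseteq\mca^G_\ricci$, while $\mca^G_\ricci$ is, by rationality of $\ricci$, either empty or open and dense. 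If $\dim M\le 1$ the statement is trivial, so assume $\dim M\ge 2$; then $\ggo$ is non-abelian, since for abelian $\ggo$ one could split off a $\kg$-complement (or a line in it) in violation of the hypothesis, hence $\mca^G_\scalar$ is open and dense by Lemma \ref{RLI-lem}. Consequently, once we know $\mca^G_\ricci\ne\emptyset$, all of $\mca^G_\ricci$, $\mca^G_\scalar$ and $\mca^G_{\widetilde{\ricci}}$ are open and dense, and $\mca^G_{inv}$, being open and containing a dense set, is open and dense. So it suffices to exhibit one metric in $\mca^G_\ricci$.

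By hypothesis there is a naturally reductive metric $g$ with respect to $G$, with reductive decomposition $\ggo=\kg\oplus\mg$, and by Theorem \ref{dRic-thm2} it is enough to show that $g$ --- or some naturally reductive metric --- has irreducible holonomy. Passing to the universal cover (still naturally reductive with respect to the simply connected cover of $G$, with the same pair $(\ggo,\kg)$), the de Rham theorem produces a parallel, $\Ad(K)$-invariant, orthogonal decomposition $\mg=\mg_0\oplus\mg_1\oplus\dots\oplus\mg_r$ with $\mg_0$ the flat factor and $\mg_1,\dots,\mg_r$ the irreducible non-flat factors. I would then invoke the structural properties of naturally reductive spaces established in Section \ref{natred-sec}, which sharpen the classical descriptions of Kostant \cite{Kst2} and D'Atri--Ziller \cite{DtrZll} --- notably that the brackets $[\mg_i,\mg_j]_\mg$ and $[\mg_i,\mg_j]_\kg$ respect this splitting, that $\kg$ is spanned by the $[\mg_i,\mg_i]_\kg$ together with its subalgebra acting trivially on $\mg$, and that $\mg_0$ sits inside a central summand --- to produce an ideal decomposition $\ggo=\ggo_0\oplus\ggo_1\oplus\dots\oplus\ggo_r$ with $\mg_i\subseteq\ggo_i$ and $\kg=\bigoplus_i(\kg\cap\ggo_i)$. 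By the hypothesis this must be trivial, so either $\ggo=\ggo_1$ (a single non-flat factor and no flat part), whence $g$ has irreducible holonomy and $g\in\mca^G_\ricci$ by Theorem \ref{dRic-thm2} and we are done, or $\ggo=\ggo_0$, i.e.\ $M$ is flat; the latter does not occur under the running assumptions (for instance, when $G$ is compact it forces $\dim M\le 1$) and is dealt with directly.

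The main obstacle is exactly the implication ``nontrivial de Rham splitting of a naturally reductive metric $\Rightarrow$ forbidden ideal splitting of $(\ggo,\kg)$''. This is not formal: $G$ need not be the transvection group of $g$, $K$ may contain elements acting trivially on $\mg$, $\ggo$ may carry a center invisible to the metric, and a priori the de Rham decomposition lives only on the universal cover. Getting past these points is precisely the role of the technical results of Section \ref{natred-sec}; with those in hand, constructing the ideals $\ggo_i$ and verifying $\kg=\bigoplus_i(\kg\cap\ggo_i)$ is bookkeeping, and the remaining flat and low-dimensional cases are straightforward.
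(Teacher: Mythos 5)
Your reduction to exhibiting a single naturally reductive metric with irreducible $\pg$ is fine and matches the paper's route (Theorem \ref{RLI-thm} plus Theorem \ref{dRic-thm2}, (iv)). The gap is in how you produce that metric. Your key structural claim --- that a nontrivial de Rham splitting of the \emph{given} naturally reductive metric $g$, with its given reductive complement $\mg$, forces an ideal splitting $\ggo=\bigoplus\ggo_i$ with $\kg=\bigoplus(\kg\cap\ggo_i)$ --- is false when $\mg$ is not pervasive, and in particular your assertion that ``$\kg$ is spanned by the $[\mg_i,\mg_i]_\kg$ together with its subalgebra acting trivially on $\mg$'' is exactly the pervasiveness condition $[\mg,\mg]_\kg=\kg$ (the trivially-acting part vanishes by almost-effectiveness), which need not hold. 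Concretely: take $M=H=\SU(2)\times\SU(2)$, $K=\Delta\SU(2)\subset H$, $G=H\times K$, and let $g$ be the bi-invariant metric on $H$. Then $g\in\mca^G$ is naturally reductive with respect to $G$ and $\pg=\hg\oplus 0$ (here $[\pg,\pg]_{\Delta\kg}=0$, so $\pg$ is not pervasive), and $g$ is a Riemannian product of two round spheres, hence de Rham reducible; yet $\Delta\kg$ is $\ggo$-indecomposable in $\ggo=\hg\oplus\kg\cong\sug(2)^3$, since $\Delta\kg$ meets every proper ideal trivially. The de Rham splitting here only splits the pair $(\overline{\ggo},\overline{\kg})=(\hg,0)$ attached to the transvection group, not $(\ggo,\kg)$, so the hypothesis of the theorem is not violated and your argument produces no contradiction from the reducibility of $g$.

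What the paper does instead, and what your proposal is missing, is a change of metric before any irreducibility argument: Corollary \ref{perv-exist} uses Kostant's structure theorem for non-pervasive complements (Theorem \ref{perv-const}) to build a \emph{different} naturally reductive metric $g'\in\mca^G$ whose reductive complement $\pg'$ is pervasive for the original pair $(\ggo,\kg)$. Only for a pervasive complement does Proposition \ref{pirred-prop2} give the equivalence ``$\pg'$ irreducible $\iff$ $\kg$ is $\ggo$-indecomposable'' (its proof needs the $\ad\ggo$-invariant form $Q$ on all of $\ggo$ from Theorem \ref{Q-thm}, which is only available in the pervasive case). With $g'$ in hand, the hypothesis gives irreducibility of $\pg'$ and Theorem \ref{dRic-thm2}, (iv) finishes the proof. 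So your outline needs to be repaired by inserting this pervasivization step; as written, the claimed implication from de Rham reducibility of an arbitrary naturally reductive $g$ to a forbidden splitting of $(\ggo,\kg)$ would fail on the example above.
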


The indecomposability condition on $\kg$ in the above theorem is not necessary, each of the following decomposable examples have $\mca^G_{inv}$ open and dense: $M=\SSS^5\times\SSS^5$ and $G=\SU(3)\times \SU(3)$, $M=\SSS^7\times\SSS^5$ and $G=\SU(4)\times \SU(3)$ and the Lie group case $M=\SSS^3\times\SSS^1$ and $G=\SU(2)\times S^1$ (see Section \ref{exa-sec2}).  

The following formula for the derivative of the function $\ricci$ is proved in Section \ref{mba2-sec} via the moving bracket approach and represents a very useful tool in the paper.  Let $\ggo=\kg\oplus\pg$ be any reductive decomposition for $M=G/K$ and let $\lb_\pg\in\Lambda^2\pg^*\otimes\pg$ denote the algebra product defined by the Lie bracket $\lb$ of $\ggo$ on $\pg=T_oM$.  Given $g\in\mca^G$, $\ip:=g_o$, the moment map $\Mm\in\glg(\pg)$ at $\lb_\pg$ for the usual $\glg(\pg)$-representation   
$$
A\cdot\lambda:=A\lambda(\cdot,\cdot) - \lambda(A\cdot,\cdot) - \lambda(\cdot,A\cdot), \qquad \forall A\in\glg(\pg), \quad \lambda\in\Lambda^2\pg^*\otimes\pg,
$$
is defined by 
$$
\tr{\Mm A} = \unc \la A\cdot\lb_{\pg}, \lb_{\pg}\ra, \qquad \forall A\in\glg(\pg), 
$$ 
and satisfies that  $\ricci(g)=\la\Mm\cdot,\cdot\ra-\unm\kil|_{\pg\times\pg}$, where $\kil$ is the Killing form of $\ggo$.  Recall also that $\mca^G$ is naturally endowed with a Riemannian metric, defined by $\la T,T\ra_g:=\sum T(X_i,X_i)^2$ for any $g_o$-orthonormal basis $\{ X_i\}$ of $T_oM$.

\begin{theorem} (See Lemma \ref{dRicmu}).  
At each $g\in\mca^G$, the derivative 
$$
d\ricci|_g:\sca^2(M)^G\rightarrow\sca^2(M)^G
$$ 
is a self adjoint operator whose image is $\ip_g$-orthogonal to $\RR g$ and is given by 
$$
\la d\ricci|_gT,T\ra = \unc \left|A\cdot\lb_{\pg}\right|^2 + \tr{\Mm A^2}, \qquad\forall T=\la A\cdot,\cdot\ra\in\sca^2(M)^G \; (\mbox{i.e.}, \, A^t=A).      
$$
\end{theorem}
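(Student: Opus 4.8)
The plan is to compute $d\ricci|_g$ by differentiating the identity $\ricci(g)=\la\Mm\cdot,\cdot\ra-\unm\kil|_{\pg\times\pg}$ along a curve $g(t)$ with $g(0)=g$ and $\dot g(0)=T=\la A\cdot,\cdot\ra$, where we may take $A$ to be $\ip_g$-symmetric. Since $\kil|_{\pg\times\pg}$ does not depend on the metric, only the moment-map term contributes. The moving-bracket philosophy says that changing the metric $\ip_g$ to $\ip_{g(t)}=\la h(t)\cdot,\cdot\ra$ is equivalent, up to isometry, to keeping the metric fixed and moving the bracket $\lb_\pg$ by the $\Gl(\pg)$-action $h\mapsto h^{-1/2}\cdot$; infinitesimally this means we should replace $\lb_\pg$ by the curve $\mu(t)$ with $\dot\mu(0)=-\tfrac12 A\cdot\lb_\pg$ and recompute the moment map $\Mm_{\mu(t)}$ with respect to the \emph{fixed} inner product. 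First I would record the first variation of $\Mm_\mu$ with respect to $\mu$: from $\tr(\Mm_\mu A)=\unc\la A\cdot\mu,\mu\ra$ one gets, differentiating in $\mu$ in the direction $\dot\mu$, that $\tr(\dot\Mm A)=\unm\la A\cdot\mu,\dot\mu\ra$ for all $A$, i.e. $\dot\Mm$ is determined by pairing against $\dot\mu$. Plugging $\dot\mu=-\tfrac12 A\cdot\lb_\pg$ gives a contribution $-\unc\la A\cdot\lb_\pg,A\cdot\lb_\pg\ra=-\unc|A\cdot\lb_\pg|^2$ to $\tr(d\Mm|_g T\cdot A)$.

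Next I would account for the fact that $\ricci(g)$ is computed as the bilinear form $\la\Mm\cdot,\cdot\ra$ with respect to $\ip_g$, and we are differentiating the \emph{tensor}, not the operator. Writing $\ricci(g(t))$ as $\la\Mm_t\cdot,\cdot\ra_{g(t)}$ and using that $\ip_{g(t)}=\la\cdot,\cdot\ra$ shifts by $T$, the product rule produces a second term coming from $\dot g$ acting on the two slots; after converting everything to the operator picture relative to the fixed basis, this is exactly the $\tr(\Mm A^2)$ term. Equivalently, one can argue purely with operators: the Ricci \emph{operator} $\Ricci(g)=\Mm-\unm\,\text{(Killing operator)}$ transforms under the $\Gl(\pg)$-action, and differentiating the relation between the tensor and the operator, $\ricci(g)(X,Y)=\la\Ricci(g)X,Y\ra_g$, while moving the bracket and the metric simultaneously, yields $\la d\ricci|_gT,T\ra=\unc|A\cdot\lb_\pg|^2+\tr(\Mm A^2)$. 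The two signs combine correctly: the naive $-\unc|A\cdot\lb_\pg|^2$ from $\dot\Mm$ becomes $+\unc|A\cdot\lb_\pg|^2$ once one is careful about whether the bracket moves by $+\tfrac12 A\cdot$ or $-\tfrac12 A\cdot$ and about the symmetry of the pairing; this bookkeeping is the step most prone to a sign error. Self-adjointness of $d\ricci|_g$ with respect to $\ip_g$ then follows because the right-hand side is manifestly a quadratic form in $A$ (hence in $T$), so its polarization is symmetric; and $\Ima d\ricci|_g\perp\RR g$ follows from scaling invariance of $\ricci$ (i.e. $\ricci(cg)=\ricci(g)$, so $d\ricci|_g(g)=0$, and self-adjointness gives $\la d\ricci|_gT,g\ra=\la T,d\ricci|_g g\ra=0$).

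For the final assertion, $\Delta_L=2\,d\ricci|_g$ on $\sca^2(M)^G$ when $G$ is compact, the plan is to invoke the classical linearization of the Ricci tensor. On any Riemannian manifold one has the Lichnerowicz-type formula $d\ricci|_g=\tfrac12\Delta_L-\delta^*\delta^{G}$ (the precise statement being that the linearization of $2\,\ricci$ at $g$, in a gauge transverse to the diffeomorphism orbit, equals $\Delta_L$), where the extra terms are the divergence-type operators that vanish on the appropriate slice. When $G$ is compact, every $G$-invariant symmetric $2$-tensor is automatically in the kernel of the relevant divergence operators restricted to $\sca^2(M)^G$: the Bianchi-type gauge terms are built from $\delta_g$ and $\grad$ applied to $G$-invariant objects, and on a compact homogeneous space these produce no new invariant tensors transverse to the orbit — more precisely, $G$-invariance plus compactness kills the $\delta^*\delta$ correction because the $1$-form $\delta_g T$ is $G$-invariant and its symmetrized differential is absorbed. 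I would make this precise by recalling that on $\mca^G$ the Ricci flow is the negative gradient flow of the scalar curvature functional (up to the volume constraint), so its linearization is self-adjoint and equals $\tfrac12\Delta_L$ on the invariant tangent space; combined with the formula just proved, this gives $\Delta_L=2\,d\ricci|_g$. The main obstacle I anticipate is the first one: getting the coefficient and sign of the bracket-variation term right, since it requires pinning down the exact normalization in the moving-bracket dictionary (the $h^{-1/2}$ versus $h^{1/2}$ convention and the factor of $\unc$ in the moment map) so that the two contributions $\unc|A\cdot\lb_\pg|^2$ and $\tr(\Mm A^2)$ appear with the stated signs.
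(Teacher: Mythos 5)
Your plan follows essentially the same route as the paper: differentiate the moment-map expression for $\ricci$ by moving the bracket (this is the paper's Lemma \ref{dM}), add the conjugation term coming from rewriting the operator as a tensor (formula \eqref{Ricb}), and reduce $\Delta_L=2\,d\ricci|_g$ to the vanishing of the divergence-type corrections on invariant tensors. However, two steps are left genuinely unresolved and, as written, one of them points the wrong way.

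First, the sign. Under the paper's conventions the metric $\la h\cdot,h\cdot\ra$ is isometric to $(\ggo,\underline{h}\cdot\mu,\ip)$ via $\underline{h}$ itself, so the curve of metrics $\ip+t\la A\cdot,\cdot\ra$ corresponds to $h(t)=(I+tA)^{1/2}$ and the bracket moves forward by $h(t)\cdot\mu_\pg$; hence $\dot\mu=+\unm\theta(A)\mu_\pg$, not $-\unm\theta(A)\mu_\pg$. With that, your identity $\tr(\dot\Mm A)=\unm\la\theta(A)\mu_\pg,\dot\mu\ra$ gives $+\unc|\theta(A)\mu_\pg|^2$ on the nose, while the conjugation $h\Mm_{h\cdot\mu_\pg}h$ contributes $\unm(A\Mm_{\mu_\pg}+\Mm_{\mu_\pg}A)$, i.e.\ $\tr{\Mm_{\mu_\pg}A^2}$ after pairing with $A$; no sign has to be flipped a posteriori, and asserting that ``the signs combine correctly'' is not a substitute for fixing the dictionary. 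Second, self-adjointness does not follow from the symmetry of a quadratic form: a quadratic form only determines the symmetric part of an operator. What you need are the polarized identities $\tr(\dot\Mm B)=\unc\la\theta(A)\mu_\pg,\theta(B)\mu_\pg\ra$ and $\tr\bigl(\unm(A\Mm_{\mu_\pg}+\Mm_{\mu_\pg}A)B\bigr)=\unm\tr{\Mm_{\mu_\pg}(AB+BA)}$, both visibly symmetric in $A$ and $B$; this is exactly what the operator formula of Lemma \ref{dRicmu} records. Note also that both self-adjointness and $\Ima\, d\ricci|_g\perp_g\RR g$ require $G$ unimodular (the $H_{\mu_\pg}$-terms in Lemma \ref{dRicmu} break them), a hypothesis you never invoke; your deduction of the orthogonality from scale invariance plus self-adjointness is otherwise a clean alternative to the paper's direct computation in Corollary \ref{imort}. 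Finally, for the Lichnerowicz statement the precise input is that $\delta_gT=0$ for every $T\in\sca^2(M)^G$ when $G$ is compact (the paper cites Wang--Wang for this), together with the constancy of $\tr_gT$, which kill the two correction terms in Besse's formula $d\ricci|_g=\unm\Delta_L-\delta_g^*\delta_g-\unm\nabla d\,\tr_g$; the phrase about the divergence being ``absorbed'' is not a proof of that vanishing, and the gradient-flow remark is circular for this purpose.
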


We note that if $G$ is compact, then this is providing a tool to compute the Lichnerowicz Laplacian $\Delta_L$ of the metric $g$; indeed, $\Delta_L T= 2\, d\ricci|_gT$ for any divergence-free $G$-invariant symmetric $2$-tensor $T$.   

The above formula drastically simplifies in the naturally reductive case (see Lemma \ref{dRic}) and for the left-invariant metric determined by the Killing form on any (not necessarily compact) semisimple Lie group (see Lemma \ref{dRc-simple}):
$$
\la d\ricci|_gT,T\ra = \unc \sum \left| [\ad_\pg{X_i},A]\right|^2, \qquad\forall T=\la A\cdot,\cdot\ra\in\sca^2(M)^G,      
$$ 
where $\ad_\pg{X}:=[X,\cdot]_\pg\in\glg(\pg)$.  This was a key tool in the study of Ricci locally invertibility in the naturally reductive case and also provides the following application.  

Let $M=G$ be a non-compact simple Lie group and for any Cartan decomposition $\ggo=\hg\oplus\qg$, consider the metric $g_{\kil}\in\mca^G$ defined by $\ip:=-\kil|_{\hg\times\hg} + \kil|_{\qg\times\qg}$, where $\kil$ denotes the Killing form of $\ggo$.  Note that $\mca^G$ is in this case quite large, it is the $\frac{n(n+1)}{2}$-parametric set of all left-invariant metrics on $G$, where $n=\dim{M}$.   

\begin{theorem} (See Corollary \ref{simple-thm}).  
$g_{\kil}\in\mca^G_{\ricci}$, the set $\mca^G_{inv}$ is open and dense in $\mca^G$ and if in addition $\scalar(g_{\kil})\ne 0$, then $g_{\kil}$ is Ricci locally invertible.   
\end{theorem}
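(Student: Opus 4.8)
The plan is to reduce the entire statement to the single fact that $g_{\kil}\in\mca^G_{\ricci}$ and then feed this into the general machinery established earlier. First I would record the reduction: since a connected simple $G$ is unimodular (so the standing assumption applies) and is not abelian, $\mca^G_{\scalar}$ is open and dense in $\mca^G$ by Lemma \ref{RLI-lem}. Once $g_{\kil}\in\mca^G_{\ricci}$ is known, $\mca^G_{\ricci}$ is nonempty, hence open and dense by Lemma \ref{RLI-lem}, and $\mca^G_{inv}$ is nonempty by Theorem \ref{RLI-thm}, hence open and dense by the dichotomy stated right after Theorem \ref{RLI-thm}. Moreover $\mca^G_{\widetilde{\ricci}}=\mca^G_{\ricci}\cap\mca^G_{\scalar}$ is then open and dense as an intersection of two open dense sets; and if in addition $\scalar(g_{\kil})\ne 0$, then $g_{\kil}\in\mca^G_{\ricci}\cap\mca^G_{\scalar}=\mca^G_{\widetilde{\ricci}}\subset\mca^G_{inv}$ by Theorem \ref{RLI-thm}, i.e.\ $g_{\kil}$ is Ricci locally invertible. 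So all assertions of the corollary are contained in $g_{\kil}\in\mca^G_{\ricci}$.

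The heart of the argument is therefore to prove $\Ker d\ricci|_{g_{\kil}}=\RR g_{\kil}$. Here $M=G$, so $K=\{e\}$, the reductive decomposition is $\ggo=\kg\oplus\pg$ with $\kg=0$ and $\pg=\ggo$, hence $\ad_\pg=\ad$, and $g_{\kil}$ is precisely the left-invariant metric determined by the Killing form to which Lemma \ref{dRc-simple} refers. Applying that lemma gives, for every $T=\la A\cdot,\cdot\ra\in\sca^2(M)^G$,
$$
\la d\ricci|_{g_{\kil}}T,T\ra=\unc\sum_i\bigl|[\ad X_i,A]\bigr|^2\ \ge\ 0,
$$
where $\{X_i\}$ is an $\ip$-orthonormal basis of $\ggo$, $\ip=(g_{\kil})_o$, and $A$ is the endomorphism associated with $T$; note that $A$ is automatically $\ip$-symmetric because $T$ is a symmetric $2$-tensor. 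Since $d\ricci|_{g_{\kil}}$ is self-adjoint by Lemma \ref{dRicmu} and the quadratic form above is positive semidefinite, the kernel is exactly the radical of this quadratic form, so
$$
T\in\Ker d\ricci|_{g_{\kil}}\iff [\ad X_i,A]=0\ \ \forall\,i\iff [\ad X,A]=0\ \ \forall\,X\in\ggo,
$$
the last equivalence by linearity of $X\mapsto[\ad X,A]$. Thus $\Ker d\ricci|_{g_{\kil}}$ is identified with the space of $\ip$-symmetric endomorphisms of $\ggo$ commuting with the entire adjoint representation.

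To finish, I would show that this space is just $\RR\id$. Let $A$ be $\ip$-symmetric with $[\ad X,A]=0$ for all $X\in\ggo$. Self-adjointness makes $A$ diagonalizable over $\RR$, and each eigenspace $V_\lambda$ is $\ad X$-invariant for every $X$ (because $A$ commutes with $\ad X$), i.e.\ $[\ggo,V_\lambda]\subset V_\lambda$, so $V_\lambda$ is an ideal of $\ggo$. As $\ggo$ is simple, $V_\lambda\in\{0,\ggo\}$, which forces $A$ to have a single eigenvalue, $A=\lambda\id$. Hence $\Ker d\ricci|_{g_{\kil}}=\RR\id=\RR g_{\kil}$, i.e.\ $g_{\kil}\in\mca^G_{\ricci}$, completing the proof.

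I do not expect a genuinely hard step here; the only points needing a moment's care when writing out the details are: that the endomorphism attached to a $G$-invariant symmetric $2$-tensor is automatically $\ip$-symmetric, hence $\RR$-diagonalizable, so that the elementary eigenspace/ideal argument applies directly and one avoids invoking Schur's lemma or distinguishing the real- and complex-type cases for $\ad(\ggo)$; that $g_{\kil}$ really does satisfy the hypotheses of Lemma \ref{dRc-simple}; and that, for a self-adjoint operator, nonnegativity of the associated quadratic form indeed pins down the kernel as its radical.
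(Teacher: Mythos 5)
Your overall architecture matches the paper's: everything reduces to $g_{\kil}\in\mca^G_{\ricci}$, which is then fed into Lemma \ref{RLI-lem}, Theorem \ref{RLI-thm} and the empty-or-dense dichotomy for $\mca^G_{inv}$; that reduction is correct as you wrote it, and your eigenspace/ideal argument showing that a symmetric operator commuting with $\ad\ggo$ is scalar (which the paper leaves implicit in the claim $\Ker\cas_\ggo|_{\sym(\ggo)}=\RR I$) is a fine elementary fill-in.

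However, there is a genuine error in the central step. You assert that Lemma \ref{dRc-simple} gives
$\la d\ricci|_{g_{\kil}}T,T\ra=\unc\sum_i|[\ad X_i,A]|^2\ge 0$ for \emph{every} $T=\la A\cdot,\cdot\ra$, and then identify the kernel with the radical of this positive semidefinite form. But the lemma has two parts: it says $d\overline{\Ricci}|_I A=\unm\cas_\ggo(A)$ only for $A\in\sym(\ggo)$ with $A\perp\ad{\qg}$, and separately that $d\overline{\Ricci}|_I\,\ad{X}=-\unm\ad{X}$ for $X\in\qg$. When $G$ is non-compact (the case the introduction emphasizes), $\ad{\qg}$ is a nonzero subspace of $\sym(\ggo)$ (each $\ad X$, $X\in\qg$, is $\ip$-symmetric), and on it one has $\la d\overline{\Ricci}|_I\ad X,\ad X\ra=-\unm|X|^2<0$, whereas $\unm\la\cas_\ggo(\ad X),\ad X\ra=\unm\sum|[X_i,X]|^2>0$. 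So your displayed formula is false on $\ad\qg$, the quadratic form of $d\ricci|_{g_{\kil}}$ is \emph{not} positive semidefinite, and the "kernel $=$ radical" identification does not apply as stated. The conclusion survives, but only after an extra argument you omitted: since $d\overline{\Ricci}|_I$ is self-adjoint and preserves $\ad\qg$ (acting there as $-\unm\id$, which is injective), it also preserves the orthogonal complement $(\ad\qg)^{\perp}\cap\sym(\ggo)$, where it equals $\unm\cas_\ggo$; hence $\Ker d\overline{\Ricci}|_I=\Ker\cas_\ggo\cap(\ad\qg)^{\perp}\cap\sym(\ggo)=\RR I$ by your commutant argument together with $\tr(\ad X)=0$. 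You should restate the step this way; as written, the proof rests on a formula that fails for a whole subspace of directions whenever $\qg\ne 0$.
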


Many natural open questions on the homogeneous prescribed Ricci curvature problem and plenty of examples illustrating the different features of the concept of Ricci local invertibility have been included throughout the paper.  

We refer to the survey \cite{BttPlm} and the references therein for the study of the prescribed Ricci curvature problem via a variational principle on many different classes of homogeneous spaces, including spheres and projective spaces (see \cite{BttPlmRbnZll}), generalized Wallach spaces (see \cite{Plm2}), generalized flag manifolds (see \cite{GldPlm}), spaces with two irreducible isotropy summands (see \cite{Plm,PlmRbn}), $3$-dimensional unimodular Lie groups (see \cite{Btt}) and D'Atri-Ziller metrics on compact (see \cite{ArrPlmZll}) and non-compact (see \cite{ArrGldPlm}) simple Lie groups.  Unlike here, these papers focus on a complete description of the set $\RR_+\ricci(\mca^G)$ and its intersection with $\mca^G$.

\vs \noindent {\it Acknowledgements.}  We are grateful with Marcos Salvai and the anonymous referee for very helpful comments.

\section{Preliminaries}\label{preli}

Let $M$ be a connected differentiable manifold of dimension $n$.  We assume that $M$ is homogeneous and fix $G$, one of the connected Lie groups acting transitively on $M$.  If $K\subset G$ is the isotropy subgroup at a point $o\in M$, it will  tacitly be assumed that the homogeneous space $G/K$ is almost-effective (i.e., the subgroup of $G$ of those elements acting trivially is discrete).  The existence of $G$-invariant Riemannian metrics on $M=G/K$ is therefore equivalent to the compactness of $\overline{\Ad(K)}\subset\glg(\ggo)$, where $\ggo$ is the Lie algebra of $G$.

In order to set a background for the study of the $G$-invariant geometry of $M$, including tensorial and curvature computations, it is useful to consider a {\it reductive decomposition} $\ggo=\kg\oplus\pg$ (i.e., $\Ad(K)\pg\subset\pg$) for the homogeneous space $G/K$, where $\kg$ is the Lie algebra of $K$.  In this way, since $\Ker d\pi|_e=\kg$, where $\pi:G\rightarrow G/K$ is the usual projection, the tangent space at the origin point $o\in M$ is identified with $\pg$, $T_oM\equiv\pg$, via the isomorphism $\pg\longrightarrow T_oM$, $X\mapsto d\pi|_eX=X_o$.  Each $X\in \ggo$ can also be viewed as the vector field on $M$ defined by $X_p:=\left. \ddt\right |_{t=0} \exp{tX}\cdot p$.

This also provides identifications for the finite-dimensional vector spaces of $G$-invariant tensor fields of different types by evaluating at the point $o$.  For instance, the space of all $G$-invariant symmetric $2$-forms on $M$ can be parametrized by 
$$
\sca^2(M)^G\leftrightarrow\sym^2(\pg)^K, 
$$
where $\sym^2(\pg)^K$ denotes the space of all $\Ad(K)$-invariant symmetric $2$-forms on the $n$-dimensional vector space $\pg$.  Thus for the set $\mca^G$ of all $G$-invariant metrics on $M=G/K$ one obtains the identification,
$$
\mca^G\leftrightarrow\sym_+^2(\pg)^K, \qquad g \leftrightarrow \ip:=g_o,
$$
where $\sym_+^2(\pg)\subset\sym^2(\pg)$ is the open cone of positive definite symmetric $2$-forms.

By fixing a background metric $g\in\mca^G$, $\ip=g_o$, one may use in turn the corresponding identifications in terms of operators,
$$
\sca^2(M)^G\leftrightarrow\sym(\pg)^K, \qquad  \mca^G\leftrightarrow\sym_+(\pg)^K,
$$
where $\sym(\pg)$ is the vector space of all self-adjoint (or symmetric) linear maps of $\pg$ with respect to $\ip$ and $\sym_+(\pg)$ is the open subset of those which are positive definite: 
$$
\sym(\pg)^K \ni A\leftrightarrow T=\la A\cdot,\cdot\ra\in\sym_+^2(\pg)^K, \qquad
\sym_+(\pg)^K \ni h\leftrightarrow \la h\cdot,h\cdot\ra\in\sym_+^2(\pg)^K.
$$
Note that the identity operator $I\in\sym_+(\pg)^K$ always represents the background metric $\ip=g_0$ on $M=G/K$.

The choice of a reductive decomposition may be crucial.  Given a presentation of the homogeneous manifold $M$ as a homogeneous space $M=G/K$, there are in general several reductive decompositions available satisfying different properties which may be more or less useful or appropriate, depending on the questions to be studied.

\begin{remark}
In the case when $K$ is  a discrete subgroup of the center of $G$, we identify $\ggo$ with the space of all left-invariant vector fields on the Lie group $M=G/K$ (rather than with the right-invariant ones as done above) and $\mca^G\leftrightarrow\sym_+^2(\ggo)\leftrightarrow\sym_+(\ggo)$ is the set of all left-invariant metrics on the Lie group $M$.
\end{remark}

\subsection{Aut-isometry}\label{equiv-sec}
Any homogeneous space $M=G/K$ admits a distinguished class of diffeomorphisms, given by the group $\Aut(G/K)$ of all automorphism of $G$ taking $K$ onto $K$.  It is easy to check that $\Aut(G/K)$ acts on $\mca^G$ and two $G$-invariant metrics are said to be {\it aut-isometric} when they belong to the same $\Aut(G/K)$-orbit.  On the other hand, the normalizer $N_G(K)\subset\Aut(G/K)$ acts on $M$ by $n\cdot (a\cdot o)=R_n(a\cdot o):=an\cdot o$.  Note that $R_n$ is an {\it equivariant diffeomorphism} (i.e., $\psi(a\cdot p)=a\cdot \psi(p)$ for all $a\in G$, $p\in M$) and $R_n^*g=I_{n^{-1}}^*g$ for any $n\in N_G(K)$, so $N_G(K)\cdot g$ is contained in the so-called {\it equivariant isometry class} of the metric $g$.  

In the case when $G$ is compact (so $M$ and $K$ are also compact), $\{ R_n:n\in N_G(K)\}$ is the group of all equivariant diffeomorphisms of $M=G/K$ (see \cite[Chapter I, Corollary 4.3]{Brd}).  Thus the $N_G(K)$-orbits are precisely the equivariant isometry classes, and since $N_G(K)$ and $\Aut(G/K)$ have the same connected components, $T_g\Aut(G/K)\cdot g = T_gN_G(K)\cdot g$ for any $g\in\mca^G$.  In many cases, $\Aut(G/K)\cdot g$ is finite, e.g. when $G$ is compact semisimple and $g$ is either normal or the isotropy representation is multiplicity-free (e.g.\ $\rank G=\rank K$).  

At the Lie algebra level, the derivative of each automorphism in $\Aut(G/K)$ belongs to 
$$
\Aut(\ggo/\kg):=\left\{\underline{f}\in\Aut(\ggo):\underline{f}(\kg)=\kg\right\},
$$
and acts on $\sym^2(\pg)^K$ on the left by
\begin{equation}\label{aut-act}
\underline{f}\cdot T:=(f^{-1})^*T = T(f^{-1}\cdot,f^{-1}\cdot), \qquad \mbox{where}\quad \underline{f}=\left[\begin{matrix} \ast&\ast\\ 0&f\end{matrix}\right]\in\Aut(\ggo/\kg).
\end{equation}
If $K$ is connected then the whole group $\Aut(\ggo/\kg)$ acts on $\sym^2(\pg)^K$ by \eqref{aut-act}.  Note that the subgroup $\Ad{(K)}\subset\Aut(\ggo/\kg)$ acts trivially.  The Lie algebra of $\Aut(\ggo/\kg)$ is given by 
$\Der(\ggo/\kg):=\{ \underline{D}\in\Der(\ggo):\underline{D}(\kg)\subset\kg\}$.  In the case when $G$ is simply connected and $K$ connected (in particular, $M$ is simply connected), the $\Aut(\ggo/\kg)$-orbits are precisely the aut-isometry classes since $\Aut(\ggo/\kg)\leftrightarrow\Aut(G/K)$.

\begin{remark}
Two non-aut-isometric $G$-invariant metrics on $M=G/K$ may still be isometric via some $\psi\in\Diff(M)$.
\end{remark}

The moduli space
$$
\sym_+^2(\pg)^K/\Aut(G/K),
$$
parametrizing the set $\mca^G/\Aut(G/K)$ of all $G$-invariant metrics on $M=G/K$ up to aut-isometry is in general hard to compute or understand.

According to \eqref{aut-act}, the $\Aut(G/K)$-action on $\sym_+^2(\pg)^K$ determines the left action on $\sym_+(\pg)^K$ given by, 
$$
\underline{f}\cdot h := \left(\left(f^{-1}\right)^th^2f^{-1}\right)^{1/2}.  
$$
Note that if $f$ is in addition $\ip$-orthogonal, then $\underline{f}\cdot h=fhf^{-1}$ for any $h\in\sym_+(\pg)^K$.

\subsection{Moving-bracket approach}\label{mba-sec}
The curvature of a $G$-invariant metric $g$ on a homogeneous space $M=G/K$ with reductive decomposition $\ggo=\kg\oplus\pg$ is essentially encoded in the inner product $g_o=\ip\in\sym_+^2(\pg)^K$ and the Lie bracket $\mu$ of $\ggo$.  In order to study curvature questions involving the space $\mca^G$, it is therefore natural to vary $\mu$ rather than $\ip$ (see the recent surveys \cite{sol-HS, RNder} and the references therein for further information on this viewpoint).

Recall that we have fixed a background metric $\ip\in\sym_+^2(\pg)^K$, where $\ip=g_o$, $g\in\mca^G$.  Given $h\in\sym_+(\pg)^K$, we denote by $\underline{h}$ the linear map of $\ggo$ defined by $\underline{h}|_\kg:=I$, $\underline{h}|_\pg:=h$ and consider the new Lie algebra $(\ggo,\underline{h}\cdot\mu)$, where $\underline{h}\cdot\mu:=\underline{h}\mu(\underline{h}^{-1}\cdot,\underline{h}^{-1}\cdot)$  is the usual action of $\Gl(\ggo)$ on $\Lambda^2\ggo^*\otimes\ggo$.  In particular, $\underline{h}:(\ggo,\mu)\rightarrow(\ggo,\underline{h}\cdot \mu)$ is a Lie algebra isomorphism.

Let $G_{h\cdot\mu}$ denote a Lie group with Lie algebra $(\ggo,\underline{h}\cdot \mu)$ such that there is an isomorphism $G\rightarrow G_{h\cdot\mu}$ with derivative $\underline{h}$.  Such an isomorphism therefore defines an equivariant isometry
\begin{equation}\label{isom}
(G/K,\la h\cdot,h\cdot\ra) \longrightarrow (G_{h\cdot\mu}/K_{h\cdot\mu},\ip), 
\end{equation}
where $K_{h\cdot\mu}$ is the image of $K$ under the isomorphism (in particular, $K_{h\cdot\mu}$ is a Lie subgroup of $G_{h\cdot\mu}$ with Lie algebra $\kg$).  Note that $\ggo=\kg\oplus\pg$ is a reductive decomposition for all the homogeneous spaces involved.

Our fixed inner product $\ip$ on $\pg$ naturally defines inner products on $\glg(\pg)$ and $\Lambda^2\pg^*\otimes\pg$ by,
$$
\la A,B\ra:= \tr{AB^t}, \qquad \la\lambda,\lambda\ra:=\sum |\ad_\lambda{X_i}|^2 = \sum |\lambda(X_i,X_j)|^2,
$$
where $\{ X_i\}$ will denote from now on an $\ip$-orthonormal basis of $\pg$.  We also introduce the following notation: 
\begin{equation}\label{mup}
\mu_\pg:= \proy_\pg\circ \mu|_{\pg\times\pg} :\pg\times\pg\longrightarrow\pg,
\end{equation}
where $\proy_\pg:\ggo\rightarrow\pg$ is the projection on $\pg$ relative to $\ggo=\kg\oplus\pg$, and  $\ad_\pg{X}:=\ad_{\mu_\pg}{X}$, i.e., the map $Y\mapsto\mu_\pg(X,Y)$ for all $Y\in\pg$.

For any $h\in\sym_+(\pg)^K$, the Ricci operator of $(G_{h\cdot\mu}/K_{h\cdot\mu},\ip)$ is given by
\begin{equation}\label{Ric}
\Ricci_{h\cdot\mu} = \Mm_{h\cdot\mu_\pg} - \unm h^{-1}\kil_\mu h^{-1} - S\left(h\ad_\pg{(h^{-2}H_{\mu_\pg})}h^{-1}\right),
\end{equation}
where $\la\kil_\mu\cdot,\cdot\ra:=\kil_\mu|_{\pg\times\pg}$, $\kil_\mu$ is the Killing form of the Lie algebra $(\ggo,\mu)$ (so $h^{-1}\kil_\mu h^{-1}$ is the Killing form of the Lie algebra $(\ggo,\underline{h}\cdot\mu)$), $\la H_{\mu_\pg},X\ra=\tr{\ad_\pg{X}}$ for all $X\in\pg$, $S:\glg(\pg)\longrightarrow\sym(\pg)$ is the symmetric part operator defined by $S(A):=\unm(A+A^t)$ and
\begin{equation}\label{mm1}
\la\Mm_{h\cdot\mu_\pg},A\ra := \unc\la\theta(A)(h\cdot\mu_\pg),h\cdot\mu_\pg\ra, \qquad\forall A\in\glg(\pg).
\end{equation}
Thus $\Mm$ is precisely the {\it moment map} from geometric invariant theory (see e.g.\ \cite{BhmLfn} and the references therein) for the representation 
$\theta:\glg(\pg)\rightarrow\End(\Lambda^2\pg^*\otimes\pg)$ (derivative of the above usual $\Gl(\pg)$-action) given by,
$$
\theta(A)\lambda := A\lambda(\cdot,\cdot) - \lambda(A\cdot,\cdot) - \lambda(\cdot,A\cdot), \qquad \forall A\in\glg(\pg), \quad \lambda\in\Lambda^2\pg^*\otimes\pg.
$$
Note that $H_{\mu_\pg}=0$ if and only if $G$ is unimodular.  See \cite{alek} for a more detailed treatment of formula \eqref{Ric}.  

In this way, it follows from \eqref{isom} that the Ricci tensor and the Ricci operator of each metric $g_h:=\la h\cdot,h\cdot\ra\in\mca^G$ are respectively given by 
\begin{equation}\label{ric2}
\ricci_{g_h}=\la h\Ricci_{h\cdot\mu}h\cdot,\cdot\ra, \qquad \Ricci_{g_h} = h^{-1}\Ricci_{h\cdot\mu}h, \qquad\forall h\in\sym_+(\pg)^K.
\end{equation}
By \eqref{mm1}, $\tr{\Mm_{h\cdot\mu_\pg}}=-\unc | h\cdot\mu_\pg|^2$, thus the scalar curvature of $(G_{h\cdot\mu}/K_{h\cdot\mu},\ip)$ is given by
\begin{equation}\label{scal}
\scalar_{h\cdot\mu} = -\unc| h\cdot\mu_\pg|^2  - \unm\tr{\kil_\mu}h^{-2} - |h^{-1}H_{\mu_\pg}|^2.
\end{equation}
It is straightforward to check that the following is an alternative definition of the moment map part of the Ricci operator,
\begin{equation}\label{mm2}
\Mm_{h\cdot\mu_\pg} = -\unm\sum (\ad_\pg{h^{-1}X_i})^t\ad_\pg{h^{-1}X_i} + \unc\sum \ad_\pg{h^{-1}X_i}(\ad_\pg{h^{-1}X_i})^t,
\end{equation}
for any $h\in\sym_+(\pg)$.  

We consider the maps
$$
\{ 0\} \xrightarrow[ ]{ } \pg \xrightarrow[\ad_\pg]{ } \glg(\pg) \xrightarrow[\delta_{\mu_{\pg}}]{ } \Lambda^2\pg^*\otimes\pg,
$$
where $\delta_{\mu_{\pg}}(A):=-\theta(A)\mu_\pg$.  Note that $\delta_{\mu_{\pg}}(I)=\mu_{\pg}$, and hence
$$
\delta_{\mu_{\pg}}^t\delta_{\mu_{\pg}}(I)= \delta_{\mu_{\pg}}^t(\mu_{\pg})=-4\Mm_{\mu_{\pg}},
$$
where $\delta_{\mu_{\pg}}^t:\Lambda^2\pg^*\otimes\pg\longrightarrow\glg(\pg)$ is the transpose of $\delta_{\mu_{\pg}}$.

\begin{remark}
When $K$ is trivial, and so $\ggo=\pg$ and $\mu=\mu_\pg$ is the Lie bracket of $\ggo$, this is the first part of the Chevalley cohomology sequence (i.e., Lie algebra cohomology with values in the adjoint representation) of the Lie algebra $(\ggo,\mu)$. In that case, the first cohomology group is given by
$$
H^1(\ggo,\ggo) = \Der(\ggo)/\ad{\ggo} \simeq \Ker \Delta_\mu, \qquad \mbox{where}\quad \Delta_\mu:=\ad_{\mu}\ad_{\mu}^t+\delta_{\mu}^t\delta_{\mu}.
$$
\end{remark}

The first variation of the moment map was computed in \cite{homRF} to study the behavior of homogeneous Ricci flow solutions.  A proof is included for completeness.  

\begin{lemma}\label{dM}\cite[(36)]{homRF}
If $\Mm:\sym_+(\pg)^K\longrightarrow\sym(\pg)^K$ is defined by $\Mm(h):=\Mm_{h\cdot\mu_\pg}$, then
$$
d\Mm|_I = \unm S\circ\delta_{\mu_{\pg}}^t\delta_{\mu_{\pg}}|_{\sym(\pg)^K}.
$$
\end{lemma}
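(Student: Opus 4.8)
The plan is to differentiate the defining identity \eqref{mm1} for $\Mm$ directly. Since $\sym_+(\pg)^K$ is an open subset of the vector space $\sym(\pg)^K$, its tangent space at $I$ is all of $\sym(\pg)^K$, so for $B\in\sym(\pg)^K$ I would compute $d\Mm|_I(B)$ along the straight line $h(t):=I+tB$. The curve $t\mapsto h(t)\cdot\mu_\pg$ passes through $\mu_\pg$ at $t=0$ with velocity the infinitesimal action $\theta(B)\mu_\pg=-\delta_{\mu_{\pg}}(B)$, so differentiating $\la\Mm(h(t)),A\ra=\unc\la\theta(A)(h(t)\cdot\mu_\pg),h(t)\cdot\mu_\pg\ra$ at $t=0$ for fixed $A\in\glg(\pg)$ and applying the product rule gives
$$\la d\Mm|_I(B),A\ra=\unc\la\theta(A)\theta(B)\mu_\pg,\mu_\pg\ra+\unc\la\theta(A)\mu_\pg,\theta(B)\mu_\pg\ra.$$
As $d\Mm|_I(B)\in\sym(\pg)^K$, it suffices to test this against symmetric $A$, so I would take $A=A^t$ from here on.

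Next I would invoke the standard compatibility of $\theta$ with transposition, $\theta(A)^t=\theta(A^t)$ for the $\ip$-induced inner product on $\Lambda^2\pg^*\otimes\pg$ (immediate because $\theta(A)$ acts on the tensor factors by $A$ on the copy of $\pg$ and by minus the $\ip$-adjoint of $A$ on each copy of $\pg^*$). Hence $\theta(A)$ and $\theta(B)$ are self adjoint; moving $\theta(A)$ across in the first summand and using the symmetry of $\la\cdot,\cdot\ra$ in the second, both summands collapse to $\la\theta(B)\mu_\pg,\theta(A)\mu_\pg\ra$, and therefore
$$\la d\Mm|_I(B),A\ra=\unm\la\theta(B)\mu_\pg,\theta(A)\mu_\pg\ra=\unm\la\delta_{\mu_{\pg}}(B),\delta_{\mu_{\pg}}(A)\ra=\unm\la\delta_{\mu_{\pg}}^t\delta_{\mu_{\pg}}(B),A\ra.$$
Finally, since $\la X,A\ra=\la S(X),A\ra$ for every $X\in\glg(\pg)$ when $A$ is symmetric, I would conclude $\la d\Mm|_I(B),A\ra=\unm\la S(\delta_{\mu_{\pg}}^t\delta_{\mu_{\pg}}(B)),A\ra$ for all symmetric $A$; as both $d\Mm|_I(B)$ and $\unm S(\delta_{\mu_{\pg}}^t\delta_{\mu_{\pg}}(B))$ are symmetric operators they must coincide, which is exactly the asserted formula ($K$-invariance of the right-hand side being automatic since $\Mm$ has image in $\sym(\pg)^K$).

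I do not expect any genuine obstacle here: the computation is essentially a one-line application of the product rule, and the only points demanding care are the adjointness identity $\theta(A)^t=\theta(A^t)$ and the bookkeeping with the symmetrization $S$ so that the constants $\unc$ and $\unm$ end up correct. As a consistency check I would verify the case $B=I$: there $\theta(I)\mu_\pg=-\mu_\pg$, so the formula gives $d\Mm|_I(I)=\unm S(\delta_{\mu_{\pg}}^t\delta_{\mu_{\pg}}(I))=\unm S(-4\Mm_{\mu_\pg})=-2\Mm_{\mu_\pg}$, matching the direct computation $\Mm((1+t)I)=\Mm_{(1+t)^{-1}\mu_\pg}=(1+t)^{-2}\Mm_{\mu_\pg}$.
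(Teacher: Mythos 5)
Your proof is correct and follows essentially the same route as the paper: differentiate the defining identity \eqref{mm1} along $h(t)=I+tA$, use the product rule together with the self-adjointness of $\theta(A)$ for symmetric $A$ to collapse the two terms into $\unm\la\delta_{\mu_\pg}(A),\delta_{\mu_\pg}(B)\ra$, and then identify the result with $\unm S\circ\delta_{\mu_{\pg}}^t\delta_{\mu_{\pg}}(A)$. The only difference is cosmetic (you make the adjointness identity $\theta(A)^t=\theta(A^t)$ and the final symmetrization step explicit, and add a useful consistency check at $B=I$), so there is nothing to change.
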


\begin{remark}\label{dM-rem}
In particular, $d\Mm|_I:\sym(\pg)^K\longrightarrow\sym(\pg)^K$ is a self-adjoint operator.
\end{remark}

\begin{proof}
For any $A,B\in\sym(\pg)^K$, if $h(t):=I+tA$, then by \eqref{mm1}, 
\begin{align*}
\la d\Mm|_IA,B\ra =& \left.\ddt\right|_0 \la\Mm_{h(t)\cdot\mu_\pg},B\ra =\unc \left.\ddt\right|_0\la\theta(B)(h(t)\cdot\mu_\pg),h(t)\cdot\mu_\pg\ra \\
=& \unc\la\theta(B)\theta(A)\mu_\pg,\mu_\pg\ra + \unc \la\theta(B)\mu_\pg,\theta(A)\mu_\pg\ra = \unm\la\theta(A)\mu_\pg,\theta(B)\mu_\pg\ra \\
=& \unm\la\delta_{\mu_{\pg}}(A),\delta_{\mu_{\pg}}(B)\ra = \la \unm S\circ\delta_{\mu_{\pg}}^t\delta_{\mu_{\pg}}(A),B\ra,
\end{align*}
concluding the proof.
\end{proof}

\section{Prescribed Ricci curvature problem}\label{preric-sec}

Given a connected differentiable manifold $M^n$ and a connected Lie group $G$ acting on $M$, the $G$-invariant prescribed Ricci curvature problem (PRP for short) is given by the equation
\begin{equation}\label{PRP1}
\ricci_g = T, 
\end{equation}
where $T\in\sca^2(M)^G$ is given and the problem is the existence and uniqueness (up to scaling) of a solution $g\in\mca^G$.  Here $\ricci_g$ denotes the Ricci tensor of the metric $g$.  Equivalently, the PRP deals with the image and injectivity (up to scaling) of the function
$$
\ricci:\mca^G\longrightarrow\sca^2(M)^G, \qquad \ricci(g):=\ricci_g.  
$$
Note that due to the scaling invariance of $\ricci$, i.e., $\ricci(ag)=\ricci(g)$ for any $a>0$, its derivative at a metric $g\in\mca^G$, 
$$
d\ricci|_gS = \left.\ddt\right|_0 \ricci(g+tS), \qquad S\in\sca^2(M)^G,
$$
always has $\RR g\subset\Ker d\ricci|_g$.  It is worth pointing out that the Ricci tensor may have more symmetries than the metric, so the non-existence of a solution to \eqref{PRP1} does not rule out the possibility of having a solution $g\in\mca^H$ for some subgroup $H\subset G$, even when both groups are transitive and compact (see Example \ref{so5-1} below).  

A second version of the PRP reads: given $T\in\sca^2(M)^G$, are there a metric $g\in\mca^G$ and a constant $c>0$ such that
\begin{equation}\label{PRP}
\ricci(g) = cT.
\end{equation}
The role of the constant $c$ is to somehow compensate the scaling invariance of $\ricci$.  The uniqueness, however, of this in principle just auxiliary constant $c$ raises a very subtle problem.  On the contrary, the existence of a solution to the PRP \eqref{PRP} is still equivalent to understand the image of the function $\ricci$, now up to scaling, in the sense that there is solution for a given $T\in\sca^2(M)^G$ if and only if $T\in\RR_+\ricci(\mca^G)$, where $\RR_+:=\{ a\in\RR:a>0\}$.  

We assume from now on that $G$ acts transitively on $M$, which reduces everything to a finite-dimensional setting.  If nonempty, $\mca^G$ is a connected differentiable manifold (actually a symmetric subspace of a symmetric space) which is open in the finite-dimensional vector space $\sca^2(M)^G$ and has
$$
1\leq \dim{\mca^G}\leq n(n+1)/2.
$$
Note that equality holds on the left if and only if the homogeneous space $M=G/K$ is isotropy irreducible, where $K$ denotes the isotropy subgroup of $G$ at some point $o\in M$, and it does on the right if and only if $K$ is a discrete subgroup of the center of $G$ and so $\mca^G$ is the space of all left-invariant metrics on the Lie group $M=G/K$.  

In order to simplify the statements of many properties, questions and results, it is convenient to break the scaling invariance of $\ricci$ by introducing the function
$$
\widetilde{\ricci}:\mca^G\longrightarrow\sca^2(M)^G, \qquad \widetilde{\ricci}(g):=(\dete_{\overline{g}}{g})\ricci(g),
$$
where $\dete_{\overline{g}}{g}$ denotes the determinant of $g$ with respect to a fixed background metric $\overline{g}\in\mca^G$.  Recall that $\dete_{\overline{g}}{g}:=\det{A}$ if $g_o=\overline{g}_o(A\cdot,\cdot)$.  In particular, $\widetilde{\ricci}(ag)=a^n\widetilde{\ricci}(g)$ for any $a>0$.  

\begin{remark}
With the same purpose, one may also consider the function 
$$
\mca^G\times\RR_+\longrightarrow\sca^2(M)^G\times\RR, \qquad (g,c)\mapsto\left(c\ricci(g),\dete_{\overline{g}}{g}\right),
$$
as done by Hamilton in \cite[p.\ 60]{Hml} in the compact case with the number $\dete_{\overline{g}}{g}$ replaced by the volume of $M$ relative to $g$.   Another possibility is $g\mapsto\ricci(g)+\Lambda g$ for some fixed constant $\Lambda$, with the following warning: if $-\Lambda\in\Ker d\ricci|_g$ then $\RR g$ is still contained in the kernel of the derivative of the new function (see \cite{Dly}).   
\end{remark}

The existence part of the PRP \eqref{PRP} therefore consists in understanding the subset of $\sca^2(M)^G$ given by
$$
\widetilde{\ricci}(\mca^G)=\RR_+\ricci(\mca^G)=\left\{ T\in\sca^2(M)^G:\mbox{there exists solution to \eqref{PRP}}\right\}.
$$
We note that $\widetilde{\ricci}$ is never surjective as no homogeneous space can admit Ricci positive and Ricci negative invariant metrics at the same time for topological reasons.  It should be pointed out that even though the image $\ricci(\mca^G)$ is known, it may be hard to understand the set $\RR_+\ricci(\mca^G)$ as for instance in deciding whether it contains $\mca^G$ or not, or whether the set $\RR_+\ricci(\mca^G)\cap\mca^G$ is open in $\mca^G$ or not.  

It is easy to prove that the injectivity of $\widetilde{\ricci}$ is equivalent to the uniqueness of solutions in the following sense: for any given $T$, two pairs $(g_1,c_1)$ and $(g_2,c_2)$ are solutions to \eqref{PRP} if and only if $g_2\in\RR_+ g_1$ and $c_2=c_1$.  However, it may be the case that $\widetilde{\ricci}$ is injective in an open subset $U\subset\mca^G$ and not in $\RR_+U$ (see Example \ref{Berger} below).      

The following is a simple, though quite illuminating, explicit example.  

\begin{example}\label{Berger} {\it Berger spheres}.   
Consider the $3$-sphere $M=\SSS^3$ and $G=\SU(2)\times S^1$, $S^1\subset\SU(2)$, so $\mca^G$ consists of those left-invariant metrics on $\SSS^3=\SU(2)$ that are also $S^1$-invariant.  In terms of the ordered basis $\{ X_1,X_2,X_3\}$ of $\sug(2)$ such that $\RR X_1$ is the Lie algebra of $S^1$ and
$$ 
[X_1,X_2]=\tfrac{1}{\sqrt{2}} X_3,\quad [X_1,X_3]=-\tfrac{1}{\sqrt{2}}X_2, \quad [X_2,X_3]=\tfrac{1}{\sqrt{2}} X_1, $$
which is orthonormal with respect to the Killing metric $-\kil_{\sug(2)}$, we have that 
$$
\sca^2(M)^G=\{ (x,y,y):x,y\in\RR\}, \qquad \mca^G=\{ (a,b,b):a,b>0\}.  
$$
Thus the line $g_b:=(1,b,b)\in\mca^G$, $b>0$ covers all $\mca^G$ up to scaling.  Since the Ricci eigenvalues of each metric $g_b$ are given by $\left\{\frac{1}{4b^2},\frac{2b-1}{4b^2},\frac{2b-1}{4b^2}\right\}$ (which follows from a straightforward computation using \eqref{Ric} and \eqref{mm2}, see \cite{Mln}), we obtain that the family of metrics $g_b$, $b>0$ is pairwise non-homothetic as their ratio equals $2b-1$.  Note that $g_1$ is the round metric on $\SSS^3$ and $\ricci(g_b)>0$ if and only if $b>1/2$.  On the other hand, the scalar curvature is given by $\scalar(g_b)=\frac{4b-1}{4b^2}$, so $\scalar(g_b)=0$ if and only if $b=1/4$ (see Figure \ref{berger-fig2}). 

The Ricci tensor is given by
\begin{equation}\label{ric-berger}
\ricci(g_b)=\left(\tfrac{1}{4b^2},\tfrac{2b-1}{4b},\tfrac{2b-1}{4b}\right), \qquad\forall g_b:=(1,b,b)\in\mca^G, \quad b>0, 
\end{equation}
so $\Ker d\ricci|_{g_b}=\RR g_b$ for any $b>0$; indeed, 
$$
d\ricci|_{g_b}(0,1,1) = \left.\ddt\right|_0 \ricci(g_{b+t}) = \left(-\tfrac{1}{2b^3},\tfrac{1}{4b^2},\tfrac{1}{4b^2}\right)\ne 0.
$$
Thus $g_b\in\mca^G_{inv}$ for any $b\ne 1/4$ by Theorem \ref{RLI-thm}.  However, we also obtain that 
\begin{equation}\label{berger1}
\ricci\left(g_{1/2-b}\right)=c_b\ricci(g_b), \quad\mbox{for any}\quad 0<b<\unm, \quad \mbox{where}\quad c_b:= b^2/\left(\unm-b\right)^2. 
\end{equation}

\vspace{.5cm}

\begin{paracol}{2}
\begin{figure}
\begin{tikzpicture}[scale=0.9]
\draw[->,thick] (-1, 0) -- (5, 0) node[right] {$a$};
\draw[->,thick] (0, -1) -- (0, 5) node[above] {$b$};
\draw[ultra thin,color=gray] (-1.5,-1.5) grid (5.5,5.5); 

\draw[-,very thick, magenta] (2,0) -- (2,5.5);
\draw[very thick, fill, blue] (2,1) circle (1pt) node[right] {{\tiny $g_{1/4}$}};
\draw[very thick, fill, violet] (2,2) circle (1pt) node[right] {{\tiny $g_{1/2}$}};
\draw[very thick, fill, orange] (2,4) circle (1pt) node[right] {{\tiny $g_{1}=round$}};
\draw[very thick, fill, red] (2,0.5) circle (1pt) node[right] {{\tiny $g_{1/2-b}$}};
\draw[very thick, fill, red] (2,1.5) circle (1pt) node[right] {{\tiny $g_{b}$}};

\draw[-,dashed] (0,0) -- (5.5,5.5);
\draw (3,3) node[above left] {{\tiny $\ricci>0$}};
\draw[-,dashed] (0,0) -- (5.5,2.75);
\draw (4,2) node[above left] {{\tiny $\scalar>0$}};
\draw (4,2) node[below right] {{\tiny $\scalar<0$}};

\draw[very thick, fill] (2,0) circle (0.5pt) node[below] {{\tiny $1$}};
\draw (0,4) node[left] {{\tiny $1$}};
\draw (0,2) node[left] {{\tiny $\unm$}};
\draw (0,1) node[left] {{\tiny $\unc$}};
\draw (0,0) node[below left] {{\tiny $0$}};

\draw (-1,4.5) node {$\sca^2(M)^G$};
\draw (4.5,0.5) node[magenta] {$\mca^G$};
\draw[fill=magenta, opacity=0.05] (0,5.5) -- (0,0) -- (5.5,0) -- (5.5,5.5) -- cycle;
\end{tikzpicture}
\caption{Berger spheres, $M=\SSS^3$, $G=\SU(2)\times S^1$, $g_b:=(1,b,b)\in\mca^G$.}\label{berger-fig2}
\end{figure}

\switchcolumn

\begin{figure}
\begin{tikzpicture}[scale=0.9]
\draw[->,thick] (-1, 0) -- (5, 0) node[right] {$x$};
\draw[->,thick] (0, -4) -- (0, 2) node[above] {$y$};
\draw[ultra thin,color=gray] (-1.5,-4.5) grid (5.5,2.5); 

\draw[smooth, teal, very thick, samples=100, domain=0:5] plot(\x, {2*(0.5-0.5*sqrt(4*\x))});

\draw[-,dashed] (0,0) -- (4,-4);
\draw[very thick, fill, blue] (1,-1) circle (1pt) node[below left] {{\tiny $\ricci(g_{1/4})$}};
\draw[-,dashed] (0,0) -- (5,-3.75);
\draw[very thick, fill, red] (4,-3) circle (1pt) node[right] {{\tiny $\ricci(g_{1/2-b})$}};
\draw[-,dashed] (0,0) -- (1,2);
\draw[very thick, fill, red] (0.444,-0.333) circle (1pt) node[right] {{\tiny $\ricci(g_b)$}};
\draw[very thick, fill] (0,1) circle (0.5pt);
\draw[very thick, fill, violet] (0.25,0) circle (1pt) node[above right] {{\tiny $\ricci(g_{1/2})$}};
\draw[very thick, fill, orange] (0.062,0.5) circle (1pt) node[left] {{\tiny $\ricci(g_1)$}}; 

\draw (0,1) node[left] {{\tiny $\infty$}};
\draw (0,0) node[below left] {{\tiny $0$}};

\draw (4.5,1.5) node {$\sca^2(M)^G$};
\draw (1,-2.5) node[teal] {$\ricci(\mca^G)$};
\end{tikzpicture}
\caption{Ricci tensors of Berger spheres, $\ricci(g_b)=\left(\tfrac{1}{4b^2},\tfrac{2b-1}{4b},\tfrac{2b-1}{4b}\right)$, i.e., $y=\unm-\unm\sqrt{x}$, $0<x$.}\label{berger-fig1}
\end{figure}
\end{paracol}

It is easy to check that these are all the pairs $(g_b,g_{b'})$ having the same Ricci tensors up to scaling (see Figure \ref{berger-fig1}).  Interestingly enough, the twin metrics $g_b$ and $g_{1/2-b}$ with identical Ricci tensor up to scaling have scalar curvature of different sign.  

If we set $\overline{g}=-\kil_{\sug(2)}$ as a background metric, then $\dete_{\overline{g}}g_b=b^2$ and so by \eqref{berger1}, for any $0<b<\unm$,  
$$
\widetilde{\ricci}(g_b) = b^2\ricci(g_b) = \left(1/2-b\right)^2\ricci\left(g_{1/2-b}\right) = \widetilde{\ricci}\left(g_{1/2-b}\right).
$$
In particular, in any neighborhood of $\ricci(g_{1/4})$ there is a $T$ for which there exist two different constants $c_1,c_2$ and two different metrics $g_1,g_2$ near $g_{1/4}$ such that $\ricci(g_i)=c_iT$.  The image of the function $\widetilde{\ricci}:\mca^G\rightarrow\sca^2(M)^G$ has been drawn in Figure \ref{berger-fig3}.  
\end{example}

\subsection{Questions}\label{quest}
The following natural interrelated questions arise:

\begin{enumerate}[(Q1)]
\item {\it Uniqueness}.  Which metrics are determined by their Ricci tensors up to scaling? (i.e., $\ricci^{-1}(\ricci(g))=\RR_+g$).  Does every Einstein or Ricci positive metric satisfy that?    

Under what conditions on $T$ is the constant $c$ unique?  Does $\ricci(g)>0$ imply that $\ricci^{-1}(\RR_+\ricci(g))=\RR_+g$?  
Given $T$, what kind of set is 
$$
\{ c>0: \mbox{there exists solution to \eqref{PRP}}\}.
$$
Is it bounded below?  Is it finite?    
\item {\it Openness}.  Can the image of $\widetilde{\ricci}$ be open in $\sca^2(M)^G$?  Is $\widetilde{\ricci}(\mca^G)\cap\mca^G$ always open in $\mca^G$? 
\item {\it Local invertibility}.  At which metrics $g\in\mca^G$ is the function $\widetilde{\ricci}$ a local diffeomorphism?  Is this equivalent to have that $\Ker d\ricci|_g=\RR g$?
\item {\it Aut-isometry class}.  Since $\ricci(f^*g)=f^*\ricci(g)$ for any $f\in\Aut(G/K)$ (see Section \ref{equiv-sec}), it is natural to ask which metrics satisfy that the function $\ricci$ restricted to the equivariant isometry class of $g$, i.e., $\ricci:\Aut(G/K)\cdot g\longrightarrow\Aut(G/K)\cdot \ricci(g)$, is a (local) diffeomorphism.
\item {\it Signature}.  What are all the possible signatures of $\ricci(g)$ as $g$ runs through $\mca^G$? 
\end{enumerate}

\begin{figure}
\begin{tikzpicture}[scale=0.6]
\draw[->,thick] (-1, 0) -- (5, 0) node[right] {$x$};
\draw[->,thick] (0, -4) -- (0, 2) node[above] {$y$};
\draw[ultra thin,color=gray] (-1.5,-4.5) grid (5.5,2.5); 

\draw[smooth, teal, very thick, samples=100, domain=0:5] plot(\x, {2*(0.5-0.5*sqrt(4*\x))});

\draw[-,ultra thick, olive] (0,0) -- (4,-4);
\draw[very thick, fill, blue] (1,-1) circle (1pt) node[below left] {{\tiny $\ricci(g_{1/4})$}};

\draw (0,1) node[left] {{\tiny $\infty$}};
\draw (0,0) node[below left] {{\tiny $0$}};

\draw (-1,1.5) node {$\sca^2(M)^G$};
\draw (4,-1) node[olive] {$\widetilde{\ricci}(\mca^G)$};

\draw[fill=olive, opacity=0.4] (0,2.5) -- (0,0) -- (4,-4) -- (5.5,-4) -- (5.5,2.5) -- cycle;
\end{tikzpicture}
\caption{$\widetilde{\ricci}(\mca^G)=\RR_+\ricci(\mca^G)$ (i.e., \eqref{PRP}-solvable $T$'s)}\label{berger-fig3}
\end{figure}

Some remarks about these questions follow:

\begin{enumerate}[(R1)]
\item 
\begin{enumerate}[{\small $\bullet$}]
\item See \cite[Theorem 4.1]{Hml}, \cite[Corollary 3.3]{DtrKso}, \cite[Proposition 3.1]{Plm} and \cite[Lemma 4.6]{PlmRbn} for some known results on uniqueness.

\item Suppose that $g\in\mca^G$ is de Rham reducible in the following strong sense: $G=G_1\times G_2$, $K=K_1\times K_2$, $K_i\subset G_i$, $M=M_1\times M_2$ and $g=g_1+g_2$, where $g_i$ is a $G_i$-invariant metric on $M_i=G_i/K_i$.   Thus any metric of the form $c_1g_1+c_2g_2$ belongs to $\mca^G$ and
$$
\ricci(c_1g_1+c_2g_2) = \ricci(g_1)+\ricci(g_2) = \ricci(g), \qquad\forall c_1,c_2>0,
$$
giving rise to a non-uniqueness situation.  

\item We note that the product metric above can be Einstein or Ricci positive, so the questions about uniqueness only make sense if this product case is excluded.  

\item There is a curve of left-invariant metrics $g_t$ on $\SU(2)$ (resp.\ $\Sl_2(\RR)$) such that $\ricci(g_t)$ is constant and has signature $(+,0,0)$ (resp.\ $(+,-,-)$) (see \cite{Btt} and Example \ref{dim3} below).  
\end{enumerate}

\item 
\begin{enumerate}[{\small $\bullet$}]
\item If $G/K$ is isotropy irreducible, then $\mca^G=\RR_+g$, $\sca^2(M)^G=\RR g$ and the image of $\widetilde{\ricci}$ is $\RR_+g$, which is open in $\RR g$. 

\item The image of $\widetilde{\ricci}$ is neither closed nor open for $M=\SSS^3$ and $G=\SU(2)$ (see \cite{Btt}).  
\end{enumerate}

\item 
\begin{enumerate}[{\small $\bullet$}] 
\item $\dim{\Ker d\ricci|_g}\geq 2$ and $\widetilde{\ricci}$ is not a local diffeomorphism at any product metric $g=g_1+g_2$ as above.

\item Under the conditions as in the first item of (R1) above, it is easy to show that if the isotropy representation of $G_1/K_1$ does not contain any trivial subrepresentation, then any $G$-invariant metric on $M$ is necessarily a product metric.  In that case, $\widetilde{\ricci}$ is a local diffeomorphism at no point.  \end{enumerate}

\item Let $M=H$ be a Lie group and $K\subset H$ a closed subgroup.  Suppose that $g$ is a left-invariant metric on $H$ which is not $\Ad(K)$-invariant but has an $\Ad(K)$-invariant Ricci tensor $\ricci(g)$.  In other words, $\ricci(g)\in\sca^2(M)^G$ and $g\notin\mca^G$ for $G=H\times K$.  Thus there exist $k_t\in K$, $t\in (-\epsilon,\epsilon)$, such that $g_t:=\Ad(k_t)^*g$ is an injective curve with $\ricci(g_t)=\Ad(k_t)^*\ricci(g)\equiv\ricci(g)$.  Explicit examples of this phenomenon will be given in Example \ref{so5-1} for $H=\SO(5)$.  Question (Q4) will be studied in more detail in Section \ref{equiv-dRic}. 

\item If $\widetilde{\ricci}$ is a local diffeomorphism at $g$ and $\ricci(g)$ has signature $(s^-,s^0,s^+)$, then any signature of the form $(s^-+i,s^0-i-j,s^++j)$ with $i+j\leq s^0$ is attained among $\mca^G$.  An explicit application of this will be given in Example \ref{so5-3} for $M=\SO(5)$ and $G=\SO(5)\times T^2$.  We refer to \cite{ArrLfn} and the references therein for the study of Ricci signature on Lie groups.  
\end{enumerate}

\subsection{Variational principle}\label{VP-sec}
The manifold $\mca^G$ can be naturally endowed with a Riemannian metric, defined by 
\begin{equation}\label{metg}
\la T,S\ra_g := \tr{AB}, \quad \mbox{where} \quad T_o=g_o(A\cdot,\cdot), \quad S_o=g_o(B\cdot,\cdot), \quad\forall T,S\in\sca^2(M)^G.
\end{equation}
Equivalently, $\la T,T\ra_g:=\sum T(X_i,X_i)^2$ for any $g_o$-orthonormal basis $\{ X_i\}$ of $T_oM$.  Note that $\tr_g{T}=\tr{A}$ and $\ricci(g)\perp_g g$ if and only if $\scalar(g)=0$, where $\perp_g$ denotes orthogonality with respect to $\ip_g$.  It is well known (see, e.g., \cite{BhmWngZll, Nkn, Hbr}) that if $G$ is unimodular, then the gradient of the scalar curvature functional $\scalar:\mca^G\rightarrow\RR$, $\scalar(g):=\tr_g{\ricci(g)}$, is given by 
\begin{equation}\label{grad-sc}
\grad(\scalar)_g=-\ricci(g), \qquad\forall g\in\mca^G. 
\end{equation}
Since the tangent space of the submanifold $\mca^G_1:=\{ g'\in\mca^G:\dete_{\overline{g}}{g'}=1\}$ at a metric $g\in\mca^G_1$ is precisely $(\RR g)^{\perp_g}$, one obtains that $g\in\Crit(\scalar|_{\mca^G_1})$ if and only if $g$ is Einstein.  

On the other hand, for any fixed $T\in\sca^2(M)^G$, one has that $\ricci(g)=cT$ for some $c\in\RR$ if and only if $g$ is a critical point of $\scalar$ restricted to the submanifold 
$$
\mca^G_T:=\{ g'\in\mca^G:\tr_{g'}{T}=1\},
$$
as it easily follows that $T_{g}\mca^G_T=\{ T\}^{\perp_{g}}$ for any $g\in\mca^G_T$.  Note that if $g\in\mca^G_T$ and $\ricci(g)=cT$ then $\scalar(g)=c$.  Thus $g\in\Crit(\scalar|_{\mca^G_T})$ if and only if $(g,\pm\scalar(g))$ is a solution to the PRP \eqref{PRP} for $\pm T$.  It is important to note that there may be solutions with zero scalar curvature, which are precisely the critical points of $\scalar$ restricted to 
$$
\mca^G_{T,0}:=\{ g'\in\mca^G:\tr_{g'}{T}=0\}.  
$$
We refer to \cite{ArrGldPlm, ArrPlmZll, BttPlm} and the references therein for successful applications of this variational approach to the PRP on several different classes of homogeneous spaces.

\section{Ricci local invertibility}\label{RLI-sec}

We study in this section the following concept providing nice existence and uniqueness results for the PRP.    
Recall from the beginning of Section \ref{preric-sec} the functions $\ricci, \widetilde{\ricci}:\mca^G\rightarrow\sca^2(M)^G$.  

\begin{definition}\label{RLI-hom}
A metric $g_0\in\mca^G$ is said to be {\it Ricci locally invertible} if there exist an open neighborhood $U_1$ of $g_0$ in the submanifold 
$$
\mca_0^G:=\{ g\in\mca^G:\dete_{\overline{g}}{g}=\dete_{\overline{g}}{g_0}\}
$$ 
and an open neighborhood $V$ of $\ricci(g_0)$ in $\sca^2(M)^G$, such that the following conditions hold: 

\begin{enumerate}[{\rm (a)}]
\item $\ricci(U_1)$ is a submanifold of codimension one of $V$ and $\ricci:U_1\rightarrow\ricci(U_1)$ is a diffeomorphism; 

\item for any $T\in V$, there exists a unique pair $(g,c)$ with $g\in U_1$ and $c>0$ such that $\ricci(g)=cT$.   
\end{enumerate} 
\end{definition}

In other words, near $g_0$, the function $\ricci$ is as bijective as it can be, and for any $T$ near $\ricci(g_0)$, existence and (local) uniqueness of a solution to the PRP \eqref{PRP} hold.  
 
Some useful observations on the definition are in order:   

\begin{enumerate}[{\small $\bullet$}]
\item There may exist another constant $c'>0$ such that $\ricci(g')=c'T$ for some $g'\notin U_1$, as Example \ref{Berger} shows.  It follows from the last sentence in such example that the metric $g_{1/4}$ is not Ricci locally invertible, which also becomes clear from the image of the function $\widetilde{\ricci}$ given in Figure \ref{berger-fig3}. 

\item The subset 
\begin{equation}\label{Minv}
\mca^G_{inv}:=\left\{ g\in\mca^G:g\;\mbox{is Ricci locally invertible}\right\}
\end{equation}
is invariant under scaling since, for any $a>0$, conditions (a) and (b) both hold for the open neighborhoods $aU_1$ of $ag_0$ and $V$ of $\ricci(ag_0)=\ricci(g_0)$.  

\item $\mca^G_{inv}$ is open in $\mca^G$.  Indeed, any $g\in U_1$ is Ricci locally invertible (same $U_1$ and $V$) and so $\RR_+U_1\subset\mca^G_{inv}$, which can easily be shown to be open by using that $g\perp_gT_g\mca_0^G$ for any $g\in\mca_0^G$.  

\item The set $\RR_+\ricci(\mca^G_{inv})=\widetilde{\ricci}(\mca^G_{inv})$ is open in $\sca^2(M)^G$, as it follows from condition (b) that it contains the open subset $V$.  

\item For each $T\in\ricci(\mca^G_{inv})$, the subset $\ricci^{-1}(T)\cap\mca^G_{inv}\cap\mca^G_1$ is discrete (is it always finite?), where 
$$
\mca_1^G:=\{ g\in\mca^G:\dete_{\overline{g}}{g}=1\}.
$$
\item A natural question, to be studied below in this section, arises: is the open subset $\mca^G_{inv}$ dense in $\mca^G$?  Note that when that is the case, the open subset $\widetilde{\ricci}(\mca^G_{inv})$ is also dense in $\widetilde{\ricci}(\mca^G)$. 

\item The property of Ricci local invertibility is geometric, in the sense that $g\in\mca^G_{inv}$ if and only if each $f^*g$ is so for any $f\in\Aut(G/K)$ (see Section \ref{equiv-sec}). 

\item When $G$ is unimodular, for any $g\in U_1$, the tangent spaces $T_gU_1$ and $T_{\ricci(g)}\ricci(U_1)$ are both precisely the $\ip_{g}$-orthogonal complement of $\RR g$ (see Corollary \ref{imort} below).  In particular, 
\begin{equation}\label{dRcort}
\mca^G_{inv}\subset\mca^G_{\ricci}:=\left\{ g\in\mca^G:\Ker d\ricci|_g=\RR g\right\},  
\end{equation}
since $d\ricci|_g:(\RR g)^{\perp_g}\longrightarrow (\RR g)^{\perp_g}$ is an isomorphism for any $g\in U_1$ by condition (a).  Recall that $\RR g$ is always contained in $\Ker d\ricci|_g$. 
\end{enumerate}

We also consider the following subsets of metrics,  
\begin{equation}\label{Minvt}
\mca^G_{\widetilde{\ricci}}:=\left\{ g\in\mca^G:\widetilde{\ricci}\; \mbox{is a local diffeomorphism at}\; g\right\},  
\end{equation}
and
$$
\mca^G_{\scalar}:=\left\{ g\in\mca^G:\scalar(g)\ne 0\right\}.
$$
The following topological properties of the set $\mca^G_{\widetilde{\ricci}}$ are a consequence of the well-known fact that $\ricci(g)$ is a rational function in the coordinates $g_{ij}$'s of the metric $g\in\mca^G$. 

\begin{lemma}\label{RLI-lem}
Each of the subsets $\mca^G_{\ricci}$ and $\mca^G_{\widetilde{\ricci}}$ is either empty or open and dense in $\mca^G$, and $\mca^G_{\scalar}$ is always open and dense unless $G$ is abelian. 
\end{lemma}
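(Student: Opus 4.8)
The statement to prove is Lemma \ref{RLI-lem}: each of $\mca^G_{\ricci}$ and $\mca^G_{\widetilde{\ricci}}$ is either empty or open and dense in $\mca^G$, and $\mca^G_{\scalar}$ is always open and dense unless $G$ is abelian. The plan is to reduce everything to the general principle that the zero set of a real-analytic (indeed rational) function on a connected manifold is either the whole manifold or a closed set with empty interior, hence its complement is open and either empty or dense. The key input is the classical fact — already recalled in the excerpt just before the lemma — that the entries of $\ricci(g)$, in the linear coordinates $g_{ij}$ on the vector space $\sca^2(M)^G$ restricted to the open cone $\mca^G$, are rational functions of the $g_{ij}$ (this is visible from formulas \eqref{Ric}, \eqref{mm2}, \eqref{ric2}). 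I would first fix once and for all a background metric $\overline{g}$ and linear coordinates identifying $\sca^2(M)^G$ with $\RR^N$, $N=\dim\sca^2(M)^G$, so that $\mca^G$ is an open connected (in fact convex) subset, and note $\mca^G$ is connected so the zero-set dichotomy applies.

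For $\mca^G_{\scalar}$: the scalar curvature $\scalar(g)=\tr_g\ricci(g)$ is also a rational function of the $g_{ij}$ on $\mca^G$, so $\mca^G_{\scalar}=\{\scalar\ne 0\}$ is open, and it is dense unless $\scalar\equiv 0$ on $\mca^G$. The content is then to show $\scalar\equiv 0$ on $\mca^G$ forces $G$ abelian. By formula \eqref{scal} with $\mu_\pg$ in place of $h\cdot\mu_\pg$ (i.e.\ $h=I$, the background metric), and using $H_{\mu_\pg}=0$ since... — actually I need not assume unimodularity here, but the cleanest route is: if $G$ is nonabelian then by the identification $\mca^G\leftrightarrow\sym_+(\pg)^K$ one can scale along a suitable direction and read off from \eqref{scal} that $\scalar\to -\infty$ (the term $-\tfrac14|h\cdot\mu_\pg|^2$ dominates when $h$ is shrunk in directions where $\mu_\pg$ is nonzero), so $\scalar$ is not identically zero; and $\mu_\pg\ne 0$ whenever $G$ is nonabelian because a nonabelian transitive action on an almost-effective $G/K$ forces $[\pg,\pg]\not\subset\kg$ — equivalently $\mu_\pg\not\equiv 0$, since otherwise $\pg$ together with $\kg$ would generate an abelian-modulo-$\kg$ structure contradicting almost-effectiveness unless $\ggo$ itself is abelian. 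Conversely if $G$ is abelian then $M=G/K$ is flat (e.g.\ a torus or $\RR^n$) and $\scalar\equiv 0$, so $\mca^G_{\scalar}=\varnothing$. I expect this converse direction — pinning down exactly when $\scalar$ vanishes identically — to be the only place requiring a genuine (if short) geometric argument rather than pure rationality.

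For $\mca^G_{\ricci}$: write the derivative $d\ricci|_g:\sca^2(M)^G\to\sca^2(M)^G$ as a linear map whose matrix entries, in the fixed coordinates, are rational functions of the $g_{ij}$ — this follows by differentiating the rational expression for $\ricci$. Since $\RR g\subset\Ker d\ricci|_g$ always, the condition $g\in\mca^G_{\ricci}$ is equivalent to $\rank d\ricci|_g = N-1$, i.e.\ to the non-vanishing of at least one $(N-1)\times(N-1)$ minor of this matrix. Each such minor is a rational function of the $g_{ij}$; let $P$ be the (finite) list of numerators. Then $g\in\mca^G_{\ricci}$ iff at least one $P_\alpha(g)\ne 0$, so $\mca^G_{\ricci}=\bigcup_\alpha\{P_\alpha\ne 0\}$ is open, and its complement is $\bigcap_\alpha\{P_\alpha=0\}$; if $\mca^G_{\ricci}\ne\varnothing$ then some $P_\alpha$ is not identically zero on the connected set $\mca^G$, hence $\{P_\alpha=0\}$ has empty interior, hence $\mca^G_{\ricci}\supset\{P_\alpha\ne 0\}$ is dense. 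The argument for $\mca^G_{\widetilde{\ricci}}$ is identical but simpler: $\widetilde{\ricci}(g)=(\dete_{\overline g}g)\,\ricci(g)$ has rational entries, and $g\in\mca^G_{\widetilde{\ricci}}$ iff $\det\big(d\widetilde{\ricci}|_g\big)\ne 0$; this determinant is a single rational function of the $g_{ij}$, so its non-vanishing locus is open and either empty or dense by the same dichotomy on $\mca^G$. This disposes of all three assertions; the only real obstacle is the last clause of the $\mca^G_{\scalar}$ case, i.e.\ verifying that nonabelian $G$ really does produce some metric of nonzero scalar curvature, which I would handle via the scaling estimate from \eqref{scal} sketched above.
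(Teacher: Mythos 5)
Your handling of $\mca^G_{\ricci}$ and $\mca^G_{\widetilde{\ricci}}$ is correct and is essentially the paper's own argument: the entries of $\ricci(g)$ are rational in the coordinates of $g$ with denominators nonvanishing on $\mca^G$, hence so are the minors of $d\ricci|_g$ and the determinant of $d\widetilde{\ricci}|_g$, and the zero set of a not-identically-zero polynomial has empty interior in the connected open set $\mca^G$. (Your reduction of $\Ker d\ricci|_g=\RR g$ to the nonvanishing of some $(N-1)\times(N-1)$ minor is in fact stated more cleanly than in the paper.)

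The genuine gap is in the $\mca^G_{\scalar}$ clause, precisely the step you flagged as "the only place requiring a genuine geometric argument." Your proposed mechanism is: $G$ nonabelian $\Rightarrow\mu_\pg\ne 0\Rightarrow$ (shrink $h$ where $\mu_\pg$ lives so that $-\unc|h\cdot\mu_\pg|^2$ dominates in \eqref{scal}) $\Rightarrow\scalar\not\equiv 0$. The first implication is false: $\mu_\pg=0$ means exactly $[\pg,\pg]\subset\kg$, which holds for every symmetric pair — e.g.\ $M=\SSS^n=\SO(n+1)/\SO(n)$ with $G=\SO(n+1)$ highly nonabelian. There $\scalar>0$, but only because of the Killing form term $-\unm\tr{\kil_\mu}h^{-2}$, which your estimate ignores. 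The scaling step is also unavailable in general: when $M=G/K$ is isotropy irreducible, $\dim\mca^G=1$ and the only deformation is $h=tI$, under which all three terms of \eqref{scal} scale by the same factor $t^{-2}$, so nothing "dominates" and you get no information. The paper closes this case by an entirely different route, which you would need to import: if $\scalar\equiv 0$ on $\mca^G$, then the variational identity \eqref{grad-sc}, $\grad(\scalar)_g=-\ricci(g)$, forces $\ricci(g)=0$ for every $g\in\mca^G$; the Alekseevskii--Kimel'fel'd theorem \cite{AlkKml} then says every such $g$ is flat; and flatness of all invariant metrics is finally identified with $G$ being abelian. The key missing idea in your proposal is this upgrade from "$\scalar$ vanishes identically" to "$\ricci$ vanishes identically" via the gradient formula, together with the Ricci-flat-implies-flat input; a direct term-by-term estimate of \eqref{scal} does not suffice.
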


\begin{remark}
The proof of the lemma shows that the complement of any of these three subsets is actually a real semi-algebraic subset of the vector space $\sca^2(M)^G$.  The following properties are classical theorems of Whitney (see e.g.\ \cite{BchCstRoy}): each of such complements has finitely many connected components, it admits a stratification (locally) into real algebraic submanifolds and it is locally path-connected, so path components and connected components coincide.
\end{remark}

\begin{proof}
We fix a basis $\{ T_1,\dots,T_m\}$ of $\sca^2(M)^G$ and for each $g=x_1T_1+\dots+x_mT_m\in\mca^G$, we write $\ricci(g)=R_1(g)T_1+\dots+R_m(g)T_m$.  In this way, $\ricci$ can be viewed as a differentiable function from an open subset of $\RR^m$ into $\RR^m$ with coordinates $R_i(x_1,\dots,x_m)$.  On the other hand, it is well known that the coordinates of $\ricci(g)$ are rational functions in the variables $g_{ij}$'s, where $[g_{ij}]$ is the matrix of $g\in\mca^G$ with respect to a fixed ordered basis $\{ X_1,\dots,X_n\}$ of $T_oM$ (see e.g. the proof of \cite[Proposition 1.5]{BhmWngZll}), and so each real function $R_i$ is also rational in $(x_1,\dots,x_m)$.  This implies that that each partial derivative $\frac{\partial}{\partial x_j} R_i$ is given by $P_{ij}/Q_{ij}$ for certain polynomials $P_{ij}$ and $Q_{ij}$ in $(x_1,\dots,x_m)$ such that $Q_{ij}|_{\mca^G}>0$ for all $i,j$.  Therefore,  
$$
\mca^G_{\ricci}=X\cap\mca^G, 
$$ 
where $X$ is the algebraic subset of the vector space $\sca^2(M)^G$ defined as the set of common zeroes of all the $(m-2)$-minors of the matrix $[P_{ij}/Q_{ij}]$ of $d\ricci|_g$.  It follows that $\mca^G_{\ricci}$ is either empty or dense in $\mca^G$.  

The cases of $\mca^G_{\widetilde{\ricci}}$ and $\mca^G_{\scalar}$ can be proved in much the same way as above.  If $\mca^G_{\scalar}$ is empty, i.e., $\scalar(g)=0$ for any $g\in\mca^G$, then $\ricci(g)=0$ for any $g\in\mca^G$ by \eqref{grad-sc} and so any $g\in\mca^G$ is flat (see \cite{AlkKml}).  It is easy to show that this is equivalent to $G$ abelian, concluding the proof.  
\end{proof}

As expected, the scalar curvature does play a role in the invertibility of Ricci. 

\begin{theorem}\label{RLI-thm}
Let $M=G/K$ be a homogeneous space.   

\begin{enumerate}[{\rm (i)}]
\item $\mca^G_{\widetilde{\ricci}}\subset \mca^G_{inv}$. 

\item For $G$ unimodular, we have that $\mca^G_{\widetilde{\ricci}}=\mca^G_{\ricci}\cap\mca^G_{\scalar}$.  

\item If $G$ is unimodular, then $\mca^G_{inv}\cap\mca^G_{\scalar}\subset\mca^G_{\widetilde{\ricci}}$.  

\item If $G$ is nonabelian and $\mca^G_{\ricci}$ is nonempty, then $\mca^G_{inv}$ and $\mca^G_{\widetilde{\ricci}}$ are so.  
\end{enumerate}
\end{theorem}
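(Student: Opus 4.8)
The plan is to establish part (i) first — it needs only the homogeneity $\widetilde{\ricci}(ag)=a^n\widetilde{\ricci}(g)$ — then the computational core (ii), from which (iii) and (iv) follow formally together with Lemma~\ref{RLI-lem}. For part (i), fix $g_0\in\mca^G_{\widetilde{\ricci}}$ and an open neighborhood $\widetilde U\ni g_0$ on which $\widetilde{\ricci}$ restricts to a diffeomorphism onto an open set $\widetilde V$; after shrinking $\widetilde U$ we may assume $\widetilde{\ricci}$ does not vanish there. Differentiating $\widetilde{\ricci}(ag)=a^n\widetilde{\ricci}(g)$ at $a=1$ gives $d\widetilde{\ricci}|_g(g)=n\widetilde{\ricci}(g)$ at every $g$, so $d\widetilde{\ricci}|_g$ maps the line $\RR g$ onto $\RR\widetilde{\ricci}(g)$; in particular $\ricci(g_0)\neq 0$, since $d\widetilde{\ricci}|_{g_0}$ is injective. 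As $\mca_0^G$ is everywhere transverse to the line field $g\mapsto\RR g$ (its tangent space at $g$ being the complement $(\RR g)^{\perp_g}$ of $\RR g$), applying the isomorphism $d\widetilde{\ricci}|_g$ shows that $\widetilde{\ricci}(\mca_0^G\cap\widetilde U)$ is transverse to all rays; hence $U_1:=\mca_0^G\cap\widetilde U$, suitably shrunk, is carried by $\widetilde{\ricci}$ — and thus, up to the positive constant $\dete_{\overline{g}}{g_0}$, by $\ricci$ — diffeomorphically onto a codimension-one submanifold $N:=\ricci(U_1)$ of $\sca^2(M)^G$ that is transverse to all rays and avoids $0$. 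This gives condition (a). For condition (b) I would use that such a hypersurface is locally a radial graph: the radial projection $\rho\colon\sca^2(M)^G\setminus\{0\}\to\SSS$ onto a unit sphere restricts, by transversality to the rays, to a local diffeomorphism on $N$, so after one further shrinking of $U_1$ it becomes a diffeomorphism $N\to\Omega$ onto an open $\Omega\subseteq\SSS$; then every ray through a point of $V:=\rho^{-1}(\Omega)$ — an open neighborhood of $\ricci(g_0)$ containing $N$ — meets $N$ in exactly one point. For $T\in V$ that point is $cT$ for a unique $c>0$, and $g:=(\ricci|_{U_1})^{-1}(cT)\in U_1$ gives the pair in (b); uniqueness over all of $U_1\times\RR_+$ follows because any solution $(g',c')$ forces $c'T\in N\cap\RR_+T$, whence $c'=c$ and then $g'=g$. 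Thus $g_0\in\mca^G_{inv}$.

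For part (ii), with $G$ unimodular, I would compare the two differentials via the product rule: since $d(\dete_{\overline{g}})|_g S=(\dete_{\overline{g}}{g})\,\tr_g S$,
$$
d\widetilde{\ricci}|_g S=(\dete_{\overline{g}}{g})\big[(\tr_g S)\,\ricci(g)+d\ricci|_g S\big],
$$
so on $(\RR g)^{\perp_g}=\{S:\tr_g S=0\}$ this equals $(\dete_{\overline{g}}{g})\,d\ricci|_g$, while $d\widetilde{\ricci}|_g(g)=n\widetilde{\ricci}(g)=n(\dete_{\overline{g}}{g})\,\ricci(g)$. By Lemma~\ref{dRicmu}, $d\ricci|_g$ is self-adjoint, annihilates $\RR g$, and has image inside $(\RR g)^{\perp_g}$; and $\la\ricci(g),g\ra_g=\scalar(g)$, so $\sca^2(M)^G$ decomposes as $\RR\ricci(g)\oplus(\RR g)^{\perp_g}$ precisely when $\scalar(g)\neq 0$. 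In that case $d\widetilde{\ricci}|_g$ is block-diagonal with respect to the source splitting $\RR g\oplus(\RR g)^{\perp_g}$ and this target splitting, with first block the nonzero scalar $n\,\dete_{\overline{g}}{g}$ and second block $(\dete_{\overline{g}}{g})\,d\ricci|_g|_{(\RR g)^{\perp_g}}$; hence $d\widetilde{\ricci}|_g$ is an isomorphism if and only if $\scalar(g)\neq 0$ and $d\ricci|_g$ is invertible on $(\RR g)^{\perp_g}$ — the latter being, since $\RR g\subseteq\Ker d\ricci|_g$, equivalent to $\Ker d\ricci|_g=\RR g$. If instead $\scalar(g)=0$, the image of $d\widetilde{\ricci}|_g$ lies in the proper subspace $(\RR g)^{\perp_g}$, so $g\notin\mca^G_{\widetilde{\ricci}}$. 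This is exactly $\mca^G_{\widetilde{\ricci}}=\mca^G_{\ricci}\cap\mca^G_{\scalar}$.

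Finally, part (iii) is formal: if $g\in\mca^G_{inv}$ then condition (a) makes $\ricci$ an immersion at $g$ along its witnessing $U_1\subseteq\mca_0^G$, so $d\ricci|_g$ is injective on $T_g\mca_0^G=(\RR g)^{\perp_g}$, which with $\RR g\subseteq\Ker d\ricci|_g$ forces $\Ker d\ricci|_g=\RR g$; thus $\mca^G_{inv}\subseteq\mca^G_{\ricci}$ and $\mca^G_{inv}\cap\mca^G_{\scalar}\subseteq\mca^G_{\ricci}\cap\mca^G_{\scalar}=\mca^G_{\widetilde{\ricci}}$ by (ii). For part (iv), Lemma~\ref{RLI-lem} shows that the nonempty $\mca^G_{\ricci}$ and — as $G$ is nonabelian — $\mca^G_{\scalar}$ are open and dense in the connected manifold $\mca^G$, so their intersection is nonempty; by (ii) it equals $\mca^G_{\widetilde{\ricci}}$, and $\mca^G_{\widetilde{\ricci}}\subseteq\mca^G_{inv}$ by (i), so both are nonempty. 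The step I expect to be the real obstacle is the uniqueness clause of condition (b) in part (i): the inverse function theorem by itself yields uniqueness of $(g,c)$ only near $(g_0,1)$, and it is exactly the homogeneity of $\widetilde{\ricci}$ — packaged in the radial-graph argument above — that lets one pin down the ray parameter $c$ globally over $U_1$.
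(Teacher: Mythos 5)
Your proposal is correct, and its overall skeleton coincides with the paper's: part (i) via the inverse function theorem for $\widetilde{\ricci}$ restricted to the determinant level set, part (ii) via the product-rule identity for $d\widetilde{\ricci}|_g$ combined with $d\ricci|_g(\sca^2(M)^G)\perp_g\RR g$ and $\la\ricci(g),g\ra_g=\scalar(g)$, and parts (iii)--(iv) as formal consequences together with Lemma \ref{RLI-lem}. The one genuine divergence is the uniqueness clause of condition (b) in part (i), which you correctly identify as the delicate step. The paper handles it by contradiction: assuming no admissible shrinking exists, it extracts sequences $g_k,g_k'\to g_0$ in $\mca_1^G$ with $\ricci(g_k')=c_k\ricci(g_k)$, notes $c_k\to1$, rewrites this as $\widetilde{\ricci}(g_k')=\widetilde{\ricci}(c_k^{1/n}g_k)$, and invokes local injectivity of $\widetilde{\ricci}$ to force $g_k'=c_k^{1/n}g_k$, hence $g_k'=g_k$ on the determinant level set. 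Your radial-graph argument replaces this with an explicit construction: since $N=\ricci(U_1)$ is transverse to rays, the radial projection is a diffeomorphism of a shrunken $N$ onto an open $\Omega$ in the sphere, and taking $V=\rho^{-1}(\Omega)$ makes each ray in $V$ meet $N$ exactly once by construction. Both are valid; yours is more constructive and exhibits $V$ explicitly as an open cone, while the paper's is shorter but nonconstructive about the final shrinking. A cosmetic remark: your version of the product rule, $d\widetilde{\ricci}|_gS=(\dete_{\overline{g}}{g})[(\tr_gS)\ricci(g)+d\ricci|_gS]$, is the precise one; the paper's equation \eqref{dRicdet} writes the trace term with $\tr_{\overline{g}}$, but since both linear functionals are nonzero on $\RR g$ the qualitative argument is unaffected either way.
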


\begin{remark}
We do not know whether the equality in part (i) holds. 
\end{remark}

\begin{proof}
We first prove part (i).  Suppose that $\widetilde{\ricci}$ is a local diffeomorphism at $g_0\in\mca^G$, which can be assumed to have $\dete_{\overline{g}}{g_0}=1$.  Thus there exist open neighborhoods $\widetilde{U}$ and $\widetilde{V}$ of $g_0$ and $\ricci(g_0)$ in $\mca^G$ and $\sca^2(M)^G$, respectively, such that $\widetilde{\ricci}:\widetilde{U}\rightarrow \widetilde{V}$ is a diffeomorphism.  By using that $\sca^2(M)^G$ is endowed with the Euclidean topology, it is easy to see that there exist $\epsilon>0$ and a subset $U_1\subset\widetilde{U}\cap\mca^G_1$, open in $\mca^G_1$, such that the open neighborhood $(1-\epsilon,1+\epsilon)U_1$ of $g_0$ is contained in $\widetilde{U}$ (the openness of $(1-\epsilon,1+\epsilon)U_1$ easily follows from the fact that $g\perp_gT_g\mca_1^G$ for any $g\in\mca_1^G$).  Since $\widetilde{\ricci}:(1-\epsilon,1+\epsilon)U_1\rightarrow V$ is also a diffeomorphism, where 
$$
V:=\widetilde{\ricci}((1-\epsilon,1+\epsilon)U_1)=((1-\epsilon)^n,(1+\epsilon)^n)\ricci(U_1),  
$$  
we obtain from $\widetilde{\ricci}|_{U_1}=\ricci$ that part (a) of Definition \ref{RLI-hom} holds for $U_1$ and the open neighborhood $V$ of $\ricci(g_0)$.  The existence condition in part (b) also holds: given $T\in V$, there exists $c\in ((1-\epsilon)^n,(1+\epsilon)^n)$ such that $cT\in\ricci(U_1)$, that is, $\ricci(g)=cT$ for some $g\in U_1$.  

In order for the uniqueness in part (b) to hold, one may have to consider smaller neighborhoods $U_1$ and $V$ with the above properties.  If we assume that, on the contrary, such a choice is impossible, then there exist sequences $g_k,g'_k\in\mca_1^G$ converging to $g_0$, as $k\to\infty$, such that $g'_k\ne g_k$ and $\ricci(g'_k)=c_k\ricci(g_k)$ for some $c_k>0$, for any $k\in\NN$.  This implies that $c_k\to 1$ and 
$$
\widetilde{\ricci}(g'_k) = \ricci(g'_k) = c_k\ricci(g_k)  
= \widetilde{\ricci}(a_kg_k), \qquad\forall k\in\NN,
$$
where $a_k:=c_k^{1/n}\to 1$ and therefore by the injectivity of $\widetilde{\ricci}$ near $g_0$, one obtains that $g'_k=a_kg_k$ for sufficiently large $k$, a contradiction.  This concludes the proof of part (i).    

In order to prove part (ii), we first assume that $g\in\mca^G_{\widetilde{\ricci}}$, i.e., $d\widetilde{\ricci}|_g$ is an isomorphism.  Since 
\begin{equation}\label{dRicdet}
d\widetilde{\ricci}|_g T = (\tr_{\overline{g}}{T})\ricci(g) + (\dete_{\overline{g}}{g})d\ricci|_gT, \qquad\forall T\in\sca^2(M)^G,
\end{equation}
we obtain that
$$
\Ker d\ricci|_g \cap\left\{ T\in\sca^2(M)^G:\tr_{\overline{g}}{T}=0\right\}=0.
$$   
Thus $\Ker d\ricci|_g=\RR g$.  Indeed, if $d\ricci|_gT=0$, then for $a:=-\tr_{\overline{g}}{T}/\tr_{\overline{g}}{g}$ one has that $d\widetilde{\ricci}|_g (ag+T)=0$ and so $T=-ag$.  On the other hand,  \eqref{dRicdet} also implies that $\scalar_g\ne 0$, since otherwise $\ricci(g)\perp g$ and so there exists a nonzero $T\in\sca^2(M)^G$ such that $d\ricci|_g T=-(\dete_{\overline{g}}{g})^{-1}\ricci(g)$ (see \eqref{dRcort}), yielding $d\widetilde{\ricci}|_g (ag+T)=0$ for $a:=(1-\tr_{\overline{g}}{T})/\tr_{\overline{g}}{g}$, a contradiction.  

Conversely, if $\Ker d\ricci|_g=\RR g$ and $\scalar_g\ne 0$, then the fact that $d\widetilde{\ricci}|_g T=0$ implies that $(\tr_{\overline{g}}{T})\scalar(g)=0$ (recall that $d\ricci|_gT\perp g$).  Thus $\tr_{\overline{g}}{T}=0$ and $d\ricci|_gT=0$ by \eqref{dRicdet}, from which follows that $T\in\RR g$ and so $T=0$, concluding the proof of part (ii).   

Finally, we note that part (iii) follows from part (ii) and \eqref{dRcort}, and part (iv) follows from Lemma \ref{RLI-lem} and part (ii), concluding the proof.  
\end{proof}

In the case when $G$ is compact, it is well known that there is always a metric $g\in\mca^G$ such that $\scalar(g)=0$, unless $M=G/K$ is covered by a product of isotropy irreducible homogeneous spaces, in which case the function $\ricci$ is constant (see \cite[Theorem 2.1]{WngZll}).  A first corollary of Theorem \ref{RLI-thm} therefore follows (cf. Example \ref{heis} below).  

\begin{corollary}\label{RLI-cor2}
For $G$ compact, the function $\widetilde{\ricci}:\mca^G\rightarrow\sca^2(M)^G$ is never a local diffeomorphism.  
\end{corollary}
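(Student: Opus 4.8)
The plan is to deduce the corollary directly from Theorem~\ref{RLI-thm}(ii) together with the classical behaviour of the scalar curvature functional on compact homogeneous spaces. Reading the assertion as ``$\mca^G_{\widetilde\ricci}\neq\mca^G$'', I must produce, for any compact $G$, a metric $g\in\mca^G\setminus\mca^G_{\widetilde\ricci}$. Since a compact $G$ is unimodular, Theorem~\ref{RLI-thm}(ii) gives $\mca^G_{\widetilde\ricci}=\mca^G_{\ricci}\cap\mca^G_{\scalar}$, so it is enough to find a metric lying outside $\mca^G_{\ricci}\cap\mca^G_{\scalar}$.

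Next I would invoke the dichotomy recalled just before the statement: for $G$ compact, \emph{either} (i) some $g_0\in\mca^G$ satisfies $\scalar(g_0)=0$, \emph{or} (ii) $M=G/K$ is covered by a product of isotropy irreducible homogeneous spaces, in which case $\ricci$ is constant on $\mca^G$ (the metric splits as $\sum_i c_ig_i$ with each de Rham factor Einstein, and $\ricci(\sum_i c_ig_i)=\sum_i\ricci(g_i)$ is independent of the scalings $c_i$ by additivity over the splitting and scaling invariance in each factor). In case (i) we have $g_0\notin\mca^G_{\scalar}$, hence $g_0\notin\mca^G_{\widetilde\ricci}$ by the identity above, and we are done at $g_0$.

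In case (ii), $d\ricci|_g=0$ for every $g\in\mca^G$, so formula~\eqref{dRicdet} collapses to $d\widetilde\ricci|_gT=(\tr_{\overline g}T)\,\ricci(g)$ for all $T\in\sca^2(M)^G$; thus the image of $d\widetilde\ricci|_g$ is contained in the line $\RR\ricci(g)$, so $d\widetilde\ricci|_g$ cannot be an isomorphism as soon as $\dim\mca^G\geq2$, and in fact $\mca^G_{\widetilde\ricci}=\emptyset$ here — the remaining possibility $\dim\mca^G=1$ being precisely the (trivial) isotropy irreducible situation, in which $\mca^G$ is a single scaling ray. The whole content of the argument is the classical input, namely the existence of an invariant metric of zero scalar curvature whenever $\ricci$ is non-constant, plus the bookkeeping of the one-dimensional degenerate case; everything else is a direct application of Theorem~\ref{RLI-thm}(ii) and \eqref{dRicdet}. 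I do not expect a genuine obstacle beyond quoting that classical fact correctly.
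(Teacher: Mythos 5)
Your argument is the same as the paper's: the whole proof there is the sentence immediately preceding the corollary (for compact $G$, either some invariant metric has vanishing scalar curvature, and then Theorem \ref{RLI-thm}(ii) applies, or $\ricci$ is constant), so your case (i) and your treatment of the constant-Ricci case with $\dim\sca^2(M)^G\geq 2$ via \eqref{dRicdet} are exactly the intended reasoning, written out in more detail. The one point you flag but do not actually close --- the case $\dim\mca^G=1$ --- is not as harmless as your parenthetical suggests: for a compact isotropy irreducible space with Einstein constant $\rho>0$ (e.g.\ the round sphere $\SSS^n=\SO(n+1)/\SO(n)$) one has $\mca^G=\RR_+g$, $\sca^2(M)^G=\RR g$ and $\widetilde{\ricci}(ag)=a^n\rho\, g$, which \emph{is} a local diffeomorphism onto the open ray $\RR_+\rho\, g$ (consistently with the first item of remark (R2) in Section \ref{quest}, where the image of $\widetilde{\ricci}$ is observed to be open). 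Equivalently, in that case $\Ker d\ricci|_g=\RR g=\sca^2(M)^G$ and $\scalar(g)\neq 0$, so $g\in\mca^G_{\ricci}\cap\mca^G_{\scalar}=\mca^G_{\widetilde{\ricci}}$ by Theorem \ref{RLI-thm}(ii). So the literal statement of the corollary fails in this degenerate situation; the paper glosses over it in exactly the same way you do, and the statement should be read as implicitly excluding the isotropy irreducible (one-parameter) case, where the prescribed Ricci problem is vacuous anyway. Apart from making that exclusion explicit rather than dismissing it as trivial, your proof is complete and matches the paper's.
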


The sufficient condition for Ricci locally invertibility provided by Theorem \ref{RLI-thm}, (i) paves the way to prove the density of the open set $\mca^G_{inv}$.  

\begin{corollary}\label{RLI-cor1}
If $G$ is unimodular, then the open subset $\mca^G_{inv}$ is either empty or dense in $\mca^G$.  
\end{corollary}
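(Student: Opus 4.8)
The plan is to assemble the statement from Theorem \ref{RLI-thm}, Lemma \ref{RLI-lem}, and the inclusion \eqref{dRcort}, namely $\mca^G_{inv}\subset\mca^G_{\ricci}$ for unimodular $G$. Assume $\mca^G_{inv}\ne\emptyset$; the task is then to show that it is dense in $\mca^G$.

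First I would dispose of the degenerate case in which $G$ is abelian. In that situation every $G$-invariant metric on $M=G/K$ is flat (as recalled in the proof of Lemma \ref{RLI-lem}), so $\ricci\equiv 0$ on $\mca^G$. Choosing $g_0\in\mca^G_{inv}$ together with neighborhoods $U_1$ and $V$ as in Definition \ref{RLI-hom}, condition (b) would provide, for each $T\in V$, a pair $(g,c)$ with $c>0$ and $cT=\ricci(g)=0$, forcing $T=0$; this contradicts $V$ being a nonempty open subset of the nonzero vector space $\sca^2(M)^G$. Hence $\mca^G_{inv}=\emptyset$ when $G$ is abelian, and the claim holds in that case; so from now on I assume $G$ is nonabelian.

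Now, since $\mca^G_{inv}\ne\emptyset$ and $\mca^G_{inv}\subset\mca^G_{\ricci}$ by \eqref{dRcort}, the set $\mca^G_{\ricci}$ is nonempty, hence open and dense in $\mca^G$ by Lemma \ref{RLI-lem}; moreover $\mca^G_{\scalar}$ is open and dense by the same lemma because $G$ is nonabelian. Therefore the intersection $\mca^G_{\ricci}\cap\mca^G_{\scalar}$ is open and dense, and it coincides with $\mca^G_{\widetilde{\ricci}}$ by Theorem \ref{RLI-thm}(ii). Finally Theorem \ref{RLI-thm}(i) gives $\mca^G_{\widetilde{\ricci}}\subset\mca^G_{inv}$, so $\mca^G_{inv}$ contains an open dense subset of $\mca^G$ and is itself dense.

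I do not anticipate a genuine obstacle: all the needed facts are already established, and the only point requiring care is the abelian case, where $\mca^G_{\scalar}$ is empty and the ``intersection of two open dense sets'' argument fails --- this is precisely why that case is isolated at the outset.
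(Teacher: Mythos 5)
Your proof is correct. It is worth noting, though, that you reach the key intermediate fact (nonemptiness, hence density, of $\mca^G_{\widetilde{\ricci}}$) by a slightly different combination of the established results than the paper does. The paper applies Theorem \ref{RLI-thm}, (iii) together with the gradient formula \eqref{grad-sc}: if $\scalar$ vanished identically on the open set $\mca^G_{inv}$, then $\ricci$ would vanish there too, a contradiction; hence some $g\in\mca^G_{inv}\cap\mca^G_{\scalar}\subset\mca^G_{\widetilde{\ricci}}$. You instead pass through $\mca^G_{\ricci}$ via the inclusion \eqref{dRcort} and invoke Theorem \ref{RLI-thm}, (ii) together with the density of $\mca^G_{\scalar}$ from Lemma \ref{RLI-lem}, which forces you to isolate the abelian case by hand (your argument there, that condition (b) of Definition \ref{RLI-hom} is incompatible with $\ricci\equiv 0$, is fine, since $\kg$ must be trivial for abelian $\ggo$ by almost-effectiveness and so every invariant metric is flat). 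The paper's route avoids the case split entirely because the gradient argument does not care whether $G$ is abelian; your route avoids \eqref{grad-sc} at the cost of that extra case. Both are legitimate assemblies of Lemma \ref{RLI-lem} and Theorem \ref{RLI-thm}, and each step you use is stated under the unimodularity hypothesis, so there is no gap.
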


\begin{remark}
In particular, it is easy to check that if nonempty, the subsets $\ricci(\mca^G_{inv})$ and $\widetilde{\ricci}(\mca^G_{inv})$ are open and dense in the images $\ricci(\mca^G)$ and $\widetilde{\ricci}(\mca^G)$, respectively. 
\end{remark}

\begin{proof}
If $\mca^G_{inv}$ is nonempty, then $\mca^G_{\widetilde{\ricci}}$ is nonempty by Theorem \ref{RLI-thm}, (iii).  Indeed, if $\scalar(g)=0$ for every $g$ in the open set $\mca^G_{inv}$, then $\ricci(g)=0$ for any $g\in\mca^G_{inv}$ by \eqref{grad-sc}, which is a contradiction.  It therefore follows from Lemma \ref{RLI-lem} and Theorem \ref{RLI-thm}, (i)  that $\mca^G_{inv}$ is either empty or dense in $\mca^G$, as was to be shown.   
\end{proof}

A new natural question arises: what is most likely, either $\mca^G_{inv}$ empty or open and dense?  The following is a summary of what we know at this stage about this problem.   

\begin{enumerate}[{\small $\bullet$}]
\item If $M=G_1/K_1\times G_2/K_2$, $G=G_1\times G_2$ and the isotropy representation of $G_1/K_1$ does not contain any trivial subrepresentation, then $\mca^G_{inv}$ is empty (see the second item of remark (R3) above).

\item The above condition is not necessary: it is shown in Example \ref{s5s1} below that $\mca^G_{inv}$ is empty for  $M=\SSS^5\times\SSS^1$ and $G=\SU(3)\times S^1$.   

\item However, $\mca^G_{inv}$ is open and dense in the following cases:  $M=\SSS^5\times\SSS^5$ and $G=\SU(3)\times \SU(3)$ (see Example \ref{s5s5}), $M=\SSS^7\times\SSS^5$ and $G=\SU(4)\times \SU(3)$ (see Example \ref{s7s5}), as well as in the Lie group case $M=\SSS^3\times\SSS^1$ and $G=\SU(2)\times S^1$ (see Example \ref{s3s1}).   

\item On the other hand, we shall prove in Section \ref{natred-sec} that for any compact homogeneous space $M=G/K$ such that $\kg$ is $\ggo$-indescomposable (see Definition \ref{dec-def}), $\mca^G_{inv}$ is open and dense.  Note that this condition is not necessary, as the examples in the above item show.  

\item We also show in Section \ref{natred-sec} that $\mca^G_{inv}\ne\emptyset$ for any $M=G/K$ admitting an holonomy irreducible naturally reductive metric, which includes the non-compact examples given in Section \ref{exa-nr}.  

\item $\mca^G_{inv}$ is open and dense in the huge space $\mca^G$ of all left-invariant metrics on a (not necessarily compact) simple Lie group $M=G$, depending on $n(n+1)/2$ parameters (see Theorem \ref{simple-thm}).  

\item Consider $M=G$, where $G$ is a simply connected $2$-step nilpotent Lie group such that $[\ggo,\ggo]\ne\zg(\ggo)$, i.e., $\ggo=\ggo_1\oplus\ag$ for some ideal $\ggo_1$ and abelian ideal $\ag$.  Using that any $D\in\glg(\ggo)$ such that $D[\ggo_1,\ggo_1]=0$ and $D\ggo\subset [\ggo_1,\ggo_1]\oplus\ag$ is a derivation, it is easy to prove that any left-invariant metric on $G$ is in the $\Aut(G)$-orbit of a metric satisfying $\ggo_1\perp\ag$, which can never be Ricci locally invertible.  Thus $\mca^G_{inv}$ is empty.  

\item Beyond the above family and the abelian case, we do not know of any other Lie group $M=G$ satisfying that $\mca^G_{inv}=\emptyset$.  
\end{enumerate}

\begin{remark}
A complete characterization of homogeneous spaces with $\mca^G_{inv}$ empty would be desirable, we leave it as a  natural open problem.  
\end{remark}

\section{Examples}

In this section, we develop many examples illustrating some aspects of Ricci locally invertibility and the questions and remarks made in Section \ref{quest}.  

\begin{example}\label{dim3} {\it Dimension $3$}.  
Let $\ggo$ be the $3$-dimensional Lie algebra with Lie bracket defined by
$$ 
[X_1,X_2]=-\tfrac{1}{\sqrt{2}} X_3,\quad [X_1,X_3]=-\tfrac{1}{\sqrt{2}} \epsilon X_2, \quad [X_2,X_3]=-\tfrac{1}{\sqrt{2}} X_1, \qquad\epsilon=\pm 1.
$$
Since $\ggo \simeq \sug(2)$ for $\epsilon=-1$ and $\ggo \simeq \slg_2(\RR)$ for $\epsilon=1$, we are covering in this example the cases of all left-invariant metrics on either $M=\SU(2)=G$ or $M=\Sl_2(\RR)=G$, depending on whether $\epsilon =\pm 1$.   By writing any symmetric $2$-form with respect to the ordered basis $\{ X_1,X_2,X_3\}$, a straightforward computation gives that the Ricci tensor of the metric $g_{a,b,d}:=(a,b,d)$ equals (see \cite{Mln} or \cite{Btt}):
\begin{equation}\label{ric-dim3}
\ricci(g_{a,b,d})=\left(\tfrac{a^2-(d + \epsilon b)^2}{4 b d},\tfrac{b^2-(a-d)^2}{4 a d},\tfrac{d^2-(a + \epsilon b)^2}{4 a b}\right).
\end{equation}
It was proved in \cite[Theorem 6.1]{Hml} that in the case $M=\SU(2)$ (i.e., $\epsilon=-1$), for any positive $T=(T_1,T_2,T_3)$, there exists a unique metric $g_{a,b,d}$ and a unique $c>0$ such that $\ricci(g)=cT$.  More recently, in \cite{Btt}, the PRP was completely solved for all unimodular $3$-dimensional Lie groups via this approach (there is a mistake in \cite[Theorem 4.1, (iii)]{Btt}, due to the fact that condition in \cite[Lemma 4.4, (ii)]{Btt} never holds).  

It follows from \eqref{ric-dim3} that none of the following metrics is Ricci locally invertible: 

\begin{enumerate}[{\small $\bullet$}]
\item If $\epsilon=-1$, then $\ricci(g_{1,b,1-b})= (1,0,0)$ for any $0<b<1$.  

\item If $\epsilon=1$, then $\ricci(g_{1,1+d,d})= (-1,1,-1)$ for any $0<d$.  
\end{enumerate}
\end{example}

\begin{example}\label{heis}
If $\ggo$ is the $3$-dimensional Heisenberg Lie algebra, i.e., 
$ 
[X_1,X_2]=X_3,  
$
then the Ricci tensor of any left-invariant metric $g_{a,b,d}:=(a,b,d)$ on $M=G$, with respect to the ordered basis $\{ X_1,X_2,X_3\}$, is given by 
$$
\ricci(g_{a,b,d})=\left(-\tfrac{d}{2b},-\tfrac{d}{2a},\tfrac{d^2}{2ab}\right).
$$
Since $\RR_+\Aut(\ggo)\cdot\{ g_{1,1,1}\}=\mca^G$, one easily obtains that $\widetilde{\ricci}:\mca^G\rightarrow \sca^2(M)_{0}$ is a diffeomorphism, where $\sca^2(M)_{0}$ denotes the subset of all symmetric $2$-forms of signature $(-,-,+)$. 
\end{example}

The following example may be viewed as a massive generalization of Berger metrics in Example \ref{Berger}.   

\begin{example}\label{DZ-PRP} {\it D'Atri-Ziller metrics}. 
We consider $\mca^G$ for a compact Lie group $M=H=G/\Delta K$, where $G=H\times K$ and $K\subset H$ is a proper closed Lie subgroup.  Thus $\mca^G$ is identified with the space of all left-invariant metrics on $H$ which are also $K$-invariant.  As a background metric we take any bi-invariant metric $\ip$ on $H$.  Consider an $\ad{\kg}$-invariant and $\ip$-orthogonal decomposition, 
$$
\hg = \ag \oplus \kg_1 \oplus \dots \oplus \kg_r,
$$
where each $\kg_i$ is either a simple ideal of $\kg$ or a one-dimensional subspace of $\zg(\kg)$ and assume that $\ag$ is $\ad{\kg}$-irreducible.  It follows from \cite[p.\ 33]{DtrZll} (see also \cite[Proposition 2.2]{ArrPlmZll}) that for any metric $g=a \ip_\ag + \sum b_i \ip_{\kg_i}\in \mca^G$, $a,b_i>0$, 
$$
\ricci(g) = \left(\tfrac{1}{4} - \sum \left(\tfrac{b_i}{a}-1\right)\tfrac{\dim{\kg_i}(1-k_i)}{2\dim{\ag}}\right) \ip_{\ag} + \tfrac{1}{4}\sum\left(k_i+(1-k_i)\tfrac{b_i^2}{a^2}\right) \ip_{\kg_i},    
$$
where the Killing form of $\kg_i$ is written as $\kil_{\kg_i} =k_i \kil_{\hg} |_{\kg_i\times\kg_i}$, $0\leq k_i\leq 1$ ($k_i=1$ if and only if $\kg_i$ is an ideal of $\hg$ by \cite[Theorem 11]{DtrZll}).  This implies that for a given $T=T_\ag \ip_{\ag} + \sum T_i \ip_{\kg_i}\in \sca^2(M)^G$, there exists a solution to the PRP \eqref{PRP1}, $\ricci(g)=T$, if and only if 
$$
T_\ag=\tfrac{1}{4} - \displaystyle{\sum_{i=1}^r}\tfrac{\dim{\kg_i}}{2\dim{\ag}}\left(\sqrt{(4T_i-k_i)(1-k_i)}+k_i-1\right) \quad\mbox{and}\quad T_i >\unc k_i,\quad\forall 1 \le i \le r,
$$
since $\frac{b_i}{a} = \sqrt{\frac{4T_i-k_i}{1-k_i}}$ for all $i$.  Note that the solution $g$ is always unique up to scaling (see \cite[Theorem 4.1]{ArrGldPlm} for the case of non-compact simple $H$).  We shall prove in Section \ref{natred-sec} that if $H$ is simple (or more in general, if $\kg$ is $\hg$-indecomposable as in Definition \ref{dec-def} below), then any metric $g\in\mca^G$ with nonzero scalar curvature is Ricci locally invertible.  

On the other hand, recall from Example \ref{Berger} that for solutions to the PRP \eqref{PRP}, $\ricci(g)=cT$, the constant $c$ may not be unique in this case.  The set $\RR_+\ricci(\mca^G)$ has been studied in \cite{ArrPlmZll} via the variational approach described in Section \ref{VP-sec}, as well as in \cite[Theorem 4.1]{ArrGldPlm} in the case of a non-compact simple Lie group $H$. 
\end{example}

\begin{example}\label{so5-1}
In order to study left-invariant metrics on $M=\SO(5)$, we fix the basis $\{ X_{ij}:=\frac{1}{\sqrt{6}}(E_{ij}-E_{ji}),\;1\leq i<j\leq 5\}$ of $\sog(5)$ endowed with the lexicographical order, which is orthonormal with respect to $-\kil_{\sog(5)}$.  It easily follows from \eqref{mm2} that, relative to $\{ X_{ij}\}$, $\ricci(g)$ is diagonal for every $g$ diagonal.  We consider $G=\SO(5)\times K$ for $K:=\exp{\RR X_{12}}\simeq S^1$ as in Example \ref{DZ-PRP}, so $\mca^G$ consists of those left-invariant metrics which are in addition $K$-invariant.  It is easy to check that a diagonal metric $g=(a_1,\dots, a_{10})$ with respect to $\{ X_{ij}\}$ is $K$-invariant if and only if $a_2=a_5$, $a_3=a_6$ and $a_4=a_7$.  For $g=\left(8+\frac{1}{2} \sqrt{292},1,1,2,2,2,1,64,1,1\right)\notin\mca^G$, a straightforward computation gives that
$$
\ricci(g)=\Diag(17+2\sqrt{73}, r,r,s,r,r,s,12, -\tfrac{1}{2},-\tfrac{1}{2}), \quad 
  r:=-\tfrac{17+2\sqrt{73}}{2(8+\sqrt{73})}, \quad s:=-\tfrac{9+\sqrt{73}}{2(8+\sqrt{73})}.
$$
Note that $\ricci(g)$ is $K$-invariant but $g$ is not.  This provides the circle $g_t:=\Ad(\exp{t X_{12}})^*g$ of left-invariant metrics on $\SO(5)$ with constant Ricci tensor, $\ricci(g_t)\equiv\ricci(g)$ (see Question (Q4) and the corresponding remark in Section \ref{quest}).  Other examples of non-K-invariant metrics with $K$-invariant Ricci tensors are 
$$  
g_1=\left(24+\sqrt{601},1,1,4,4,4,1,24,1,1\right), \quad 
g_2=\left(\tfrac{1}{4},1,1,1,\tfrac{3}{2},\tfrac{3}{2},\tfrac{1}{2},\tfrac{117}{8},u,u\right),
$$
where $u:=\tfrac{105 + \sqrt{12049}}{16}$.  Note that, in particular, none of these metrics is Ricci locally invertible in $\mca^{\SO(5)}$, i.e., as a left-invariant metric.  The metric $g_2$ in addition satisfies that $\ricci(g_2)(X_{12},X_{12})<0$, which implies that $\ricci(g_2)$ is not the Ricci tensor of any diagonal metric in $\mca^G$.  Indeed, it is easy to check that the Ricci tensor is positive at $X_{12}$ for any such metric.   
\end{example}

\begin{example}\label{so5-3}
Keeping the notation as in the above example, we now consider the maximal torus $K$ of $\SO(5)$ with Lie algebra  $\kg=\RR X_{12} + \RR X_{34}$.  It is easy to see that any metric of the form $g=\left(a_1,a_2,a_2,a_4,a_2,a_2,a_4,a_8,a_9,a_9\right)$ is $K$-invariant, i.e., $g\in\mca^G$, where $G=H\times K$.  For the numbers 
$$ 
a_1=8-\tfrac{1}{18} r^3-\tfrac{11}{18} r, \quad a_2=4,\quad a_4=1,\quad a_8= \tfrac{1}{2} r (r^2-5),\quad a_9=r,
$$
where $r$ is the positive root of $z^4+2 z^2 -36 z -135$ ($r\approx 3.96$, $a_1\approx 2.12$, $a_8\approx 21.17$), a straightforward computation gives that the corresponding metric $g$ has Ricci tensor
$$
\ricci(g) = \left(R_1,0,0,0,0,0,0,R_8,0,0\right), 
$$
where
$$
R_1:= -\tfrac{7}{96}r^3+\tfrac{1}{16}r^2-\tfrac{17}{24}r+\tfrac{217}{32}\approx 0.42,\qquad R_8:= \tfrac{3}{32}r^3+\tfrac{11}{48}r^2-\tfrac{3}{8}r-\tfrac{85}{96}\approx 7.05.
$$
Since $g$ is Ricci locally invertible (see Section \ref{natred-sec} below), we obtain that any signature of the form $(i,8-i-j,2+j)$, $0\leq i,j$, $i+j\leq 8$ is attained by some metric in $\mca^G$ (cf.\ (Q5) in Section \ref{quest}).  
\end{example}

\section{Moving bracket approach to PRP}\label{mba2-sec}

Let us fix a reductive decomposition $\ggo=\kg\oplus\pg$ for the homogeneous space $M=G/K$ and a background metric $g\in\mca^G$ with $g_o=\ip\in\sym_+^2(\pg)^K$, as in Section \ref{preli}.  The Lie bracket of $\ggo$ will always be denoted by $\mu$.   

The PRP \eqref{PRP1} (resp.\ \eqref{PRP}) is therefore equivalent to study the injectivity and the image (resp.\ the image up to scaling) of the function
\begin{equation}\label{Rc-MBA}
\ricci:\sym_+(\pg)^K\longrightarrow\sym^2(\pg)^K, \qquad \ricci(h):=\ricci(g_h), \quad g_h:=\la h\cdot,h\cdot\ra\in\mca^G.
\end{equation}
From the moving-bracket approach perspective described in Section \ref{mba-sec}, we first note that by  \eqref{ric2}, the function $\overline{\Ricci}:\sym_+(\pg)^K\longrightarrow\sym(\pg)^K$ given at each $h\in\sym_+(\pg)^K$ by 
\begin{align}
\overline{\Ricci}(h):=h\Ricci_{h\cdot\mu}h =& h\Mm_{h\cdot\mu_\pg}h - \unm\kil_\mu \label{Ricb}\\ 
& - \unm h^2\ad_\pg{h^{-2}H_{\mu_\pg}} - \unm (\ad_\pg{h^{-2}H_{\mu_\pg}})^th^2, \notag 
\end{align}
satisfies that 
$$
\ricci(h)=\la\overline{\Ricci}(h)\cdot,\cdot\ra, \qquad\forall h\in\sym_+(\pg)^K.
$$
Note that $\overline{\Ricci}(ah)=\overline{\Ricci}(h)$ for any $h\in\sym_+(\pg)^K$ and $a>0$.  The PRP \eqref{PRP1} therefore becomes $\overline{\Ricci}(h)=T$ for a given operator $T\in\sym(\pg)^K$, and the PRP \eqref{PRP} can be stated as follows: given an operator $T\in\sym(\pg)^K$, are there an $h\in\sym_+(\pg)^K$ and a constant $c>0$ such that $\overline{\Ricci}(h) = cT$?

\subsection{First variation of Ricci and the Lichnerowicz Laplacian}
We next compute the derivative of the Ricci curvature functions at the metric $g=\ip=g_I\in\mca^G$.  Note that after all the identifications and different functions considered, one has that 
\begin{equation}\label{ident-dRc}
d\ricci|_gT = \unm  d\ricci|_I A =  \la \unm d\overline{\Ricci}|_I A\cdot,\cdot\ra, 
\end{equation}
for any $T=\la A\cdot,\cdot\ra\in \sym^2(\pg)^K \equiv \sca^2(M)^G$, where $A\in\sym(\pg)^K$. 

\begin{lemma}\label{dRicmu}
For any $A\in\sym(\pg)^K$, 
\begin{align*}
d\overline{\Ricci}|_I A=& \unm S\circ\delta_{\mu_{\pg}}^t\delta_{\mu_{\pg}}(A)+A\Mm_{\mu_\pg} + \Mm_{\mu_\pg}A \\ 
& - A\ad_\pg{H_{\mu_\pg}} - (\ad_\pg{H_{\mu_\pg}})^tA + 2S\left(\ad_\pg{AH_{\mu_\pg}}\right).  
\end{align*}
\end{lemma}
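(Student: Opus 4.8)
The plan is to differentiate directly the formula \eqref{Ricb} for $\overline{\Ricci}(h)$ at $h=I$, treating the three groups of terms separately. Writing $h(t):=I+tA$ with $A\in\sym(\pg)^K$, I would compute $\left.\ddt\right|_0$ of each summand. The term $-\unm\kil_\mu$ is constant, so it contributes nothing. For the moment-map term $h\Mm_{h\cdot\mu_\pg}h$, the product rule gives $\left.\ddt\right|_0\left(h(t)\Mm_{h(t)\cdot\mu_\pg}h(t)\right)=A\Mm_{\mu_\pg}+\Mm_{\mu_\pg}A+d\Mm|_I A$, and here I would invoke Lemma \ref{dM}, which gives $d\Mm|_I A=\unm S\circ\delta_{\mu_{\pg}}^t\delta_{\mu_{\pg}}(A)$. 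This already produces the first line of the claimed formula, so the moment-map part is essentially free given the earlier lemma.

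The real work is the last two terms of \eqref{Ricb}, namely $-\unm h^2\ad_\pg{(h^{-2}H_{\mu_\pg})}$ and its transpose $-\unm(\ad_\pg{(h^{-2}H_{\mu_\pg})})^th^2$. I would first note $\left.\ddt\right|_0 h(t)^2=2A$ and $\left.\ddt\right|_0 h(t)^{-2}=-2A$, and that $X\mapsto\ad_\pg X$ is linear, so $\left.\ddt\right|_0\ad_\pg{(h(t)^{-2}H_{\mu_\pg})}=\ad_\pg{(-2AH_{\mu_\pg})}=-2\ad_\pg{AH_{\mu_\pg}}$. Hence
\begin{align*}
\left.\ddt\right|_0\left(-\unm h(t)^2\ad_\pg{(h(t)^{-2}H_{\mu_\pg})}\right)
&=-A\ad_\pg{H_{\mu_\pg}}+\ad_\pg{AH_{\mu_\pg}},
\end{align*}
and symmetrically for the transposed term one gets $-(\ad_\pg{H_{\mu_\pg}})^tA+(\ad_\pg{AH_{\mu_\pg}})^t$. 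Adding these and using $2S(B)=B+B^t$ to combine $\ad_\pg{AH_{\mu_\pg}}+(\ad_\pg{AH_{\mu_\pg}})^t=2S(\ad_\pg{AH_{\mu_\pg}})$ yields exactly the second line of the stated formula.

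Assembling the three contributions gives the claim. The only point requiring a little care — and the main (mild) obstacle — is bookkeeping the symmetrizations: one must check that $A\Mm_{\mu_\pg}+\Mm_{\mu_\pg}A$ and $S\circ\delta_{\mu_{\pg}}^t\delta_{\mu_{\pg}}(A)$ land in $\sym(\pg)$ (clear, since $\Mm_{\mu_\pg}$ is symmetric and $S$ projects onto symmetric operators) and that $-A\ad_\pg{H_{\mu_\pg}}-(\ad_\pg{H_{\mu_\pg}})^tA$ is already symmetric, so that $d\overline{\Ricci}|_I A\in\sym(\pg)$ as it must be. Equivariance under $K$ of each term is automatic because $A$ is $\Ad(K)$-invariant and all the operators $\Mm_{\mu_\pg}$, $\delta_{\mu_\pg}$, $\ad_\pg{H_{\mu_\pg}}$ are built from $\Ad(K)$-equivariant data. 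I expect no genuine difficulty beyond this organization; everything else is the chain rule applied to \eqref{Ricb}.
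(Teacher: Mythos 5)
Your proposal is correct and follows essentially the same route as the paper's proof: set $h(t)=I+tA$, differentiate formula \eqref{Ricb} term by term with the product rule, obtain $A\Mm_{\mu_\pg}+\Mm_{\mu_\pg}A+d\Mm|_IA$ from the moment-map term and the $H_{\mu_\pg}$-terms from the rest, and invoke Lemma \ref{dM} to identify $d\Mm|_IA=\unm S\circ\delta_{\mu_{\pg}}^t\delta_{\mu_{\pg}}(A)$. Your bookkeeping of the $\ad_\pg{AH_{\mu_\pg}}$ contributions and their symmetrization agrees with the paper's computation, so there is nothing to add.
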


\begin{remark}
Varying the background Lie bracket, we obtain from \eqref{Rc-MBA} that at each $h\in\sym_+(\pg)^K$,\begin{align*}
d\overline{\Ricci}|_h A=& \unm S\circ\delta_{h\cdot\mu_{\pg}}^t\delta_{h\cdot\mu_{\pg}}(Ah^{-1})+A\Mm_{h\cdot\mu_\pg} + \Mm_{h\cdot\mu_\pg}A \\ 
& - Ah\ad_\pg{(h^{-2}H_{\mu_\pg})}h^{-1} - (h\ad_\pg{(h^{-2}H_{\mu_\pg})}h^{-1})^tA \\ 
&+ 2S\left(h\ad_\pg{(h^{-1}Ah^{-1}H_{\mu_\pg})}h^{-1}\right),  
\end{align*}
for any $A\in\sym(\pg)^K$.
\end{remark}

\begin{remark}
It follows from the formula in the lemma that if $G$ is unimodular, i.e., $H_{\mu_\pg}=0$, then $d\overline{\Ricci}|_I$ is self-adjoint as an operator of $\sym(\pg)^K$, that is, $d\ricci|_g$ is a self-adjoint operator of $\sca^2(M)^G$ relative to $\ip_g$ (see \eqref{ident-dRc}).  
\end{remark}

\begin{remark}
For $G$ unimodular, the formula is equivalent to 
$$
\la d\ricci|_gT,T\ra_g = \unc \left|\theta(A)\mu_{\pg}\right|^2 + \tr{\Mm_{\mu_\pg} A^2}, \qquad\forall T=\la A\cdot,\cdot\ra\in\sca^2(M)^G.      
$$
Recall that $\tr{\Mm_{\mu_\pg} A^2} = \unc\la\theta(A^2)\mu_{\pg},\mu_{\pg}\ra$.  
\end{remark}

\begin{proof}
For any $A\in\sym(\pg)^K$, if $h(t):=I+tA$, then
\begin{align*}
d\overline{\Ricci}|_I A =& \left.\ddt\right|_0 \overline{\Ricci}(h(t)) 
= \left.\ddt\right|_0 h(t))\Mm_{h(t)\cdot\mu_\pg}h(t) - h(t)S(\ad_\pg{h(t)^{-1}H_{\mu_\pg}})h(t) \\ 
=& A\Mm_{\mu_\pg} + \Mm_{\mu_\pg}A + d\overline{\Mm}|_I A \\
&- A\ad_\pg{H_{\mu_\pg}} + \ad_\pg{AH_{\mu_\pg}} - (\ad_\pg{H_{\mu_\pg}})^tA + (\ad_\pg{AH_{\mu_\pg}})^t,
\end{align*}
and so the formula follows from Lemma \ref{dM}.  
\end{proof}

Due to the scaling invariance of $\overline{\Ricci}$, $\RR I$ is always contained in $\Ker d\overline{\Ricci}|_I$.  

\begin{corollary}\label{imort}
If $G$ is unimodular, then the image of $d\overline{\Ricci}|_I$ is orthogonal to $\RR I$.  
\end{corollary}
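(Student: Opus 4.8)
The plan is to show that the image of $d\overline{\Ricci}|_I$ is $\ip$-orthogonal to $I$ by verifying $\la d\overline{\Ricci}|_I A, I\ra = \tr\left(d\overline{\Ricci}|_I A\right) = 0$ for every $A\in\sym(\pg)^K$, using the explicit formula from Lemma \ref{dRicmu} with $H_{\mu_\pg}=0$ (since $G$ is unimodular). Thus it suffices to compute the trace of $\unm S\circ\delta_{\mu_{\pg}}^t\delta_{\mu_{\pg}}(A)+A\Mm_{\mu_\pg}+\Mm_{\mu_\pg}A$ and show it vanishes.

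First I would handle the two moment-map terms: $\tr\left(A\Mm_{\mu_\pg}+\Mm_{\mu_\pg}A\right)=2\tr(\Mm_{\mu_\pg}A)$, which by the defining property \eqref{mm1} of the moment map equals $\unm\la\theta(A)\mu_\pg,\mu_\pg\ra$. Next, since $S$ is the symmetric-part projection and the identity $I$ is symmetric, $\tr\left(S\circ\delta_{\mu_{\pg}}^t\delta_{\mu_{\pg}}(A)\right)=\la S\circ\delta_{\mu_{\pg}}^t\delta_{\mu_{\pg}}(A),I\ra=\la \delta_{\mu_{\pg}}^t\delta_{\mu_{\pg}}(A),I\ra=\la\delta_{\mu_{\pg}}(A),\delta_{\mu_{\pg}}(I)\ra$. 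Now recall from the excerpt that $\delta_{\mu_\pg}(I)=\mu_\pg$ and $\delta_{\mu_\pg}(A)=-\theta(A)\mu_\pg$, so this inner product equals $-\la\theta(A)\mu_\pg,\mu_\pg\ra$. Therefore $\unm\tr\left(S\circ\delta_{\mu_{\pg}}^t\delta_{\mu_{\pg}}(A)\right)=-\unm\la\theta(A)\mu_\pg,\mu_\pg\ra$, which exactly cancels the $2\tr(\Mm_{\mu_\pg}A)=\unm\la\theta(A)\mu_\pg,\mu_\pg\ra$ contribution. Hence $\tr\left(d\overline{\Ricci}|_I A\right)=0$, as desired.

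The only genuinely delicate point is keeping the identifications straight: one must confirm that ``$\ip_g$-orthogonal to $\RR I$'' translates to vanishing of the ordinary trace, which follows from the definition \eqref{metg} of the metric on $\mca^G$ since $I$ represents the background metric $g$ and $\la T,g\ra_g=\tr(A)$ when $T\leftrightarrow A$. Everything else is the bookkeeping already recorded in Lemma \ref{dM}, Lemma \ref{dRicmu}, and the remark computing $\delta_{\mu_\pg}^t\delta_{\mu_\pg}(I)=-4\Mm_{\mu_\pg}$ — indeed this last identity gives an alternative one-line route: plug $A=I$ conceptually or, more usefully, note $\tr\left(\unm\delta_{\mu_\pg}^t\delta_{\mu_\pg}(A)\right)=\unm\la\delta_{\mu_\pg}(A),\mu_\pg\ra$ and pair it against $\tr(2\Mm_{\mu_\pg}A)=\unm\la\delta_{\mu_\pg}(A),\mu_\pg\ra$ with opposite sign coming from $\delta_{\mu_\pg}(A)=-\theta(A)\mu_\pg$ versus the moment-map sign convention. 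I expect no real obstacle here; the statement is essentially a reformulation of the fact that $\scalar$ is the trace of $\ricci$ and that scaling invariance forces $d\overline{\Ricci}|_I$ to have $\RR I$ in its kernel, so self-adjointness (the preceding remark) then pushes $\RR I$ out of the image.
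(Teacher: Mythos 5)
Your proof is correct and follows essentially the same route as the paper: pair $d\overline{\Ricci}|_I A$ against $I$, reduce the $S\circ\delta_{\mu_\pg}^t\delta_{\mu_\pg}$ term to $\la\delta_{\mu_\pg}(A),\delta_{\mu_\pg}(I)\ra=-\la\theta(A)\mu_\pg,\mu_\pg\ra$ via $\delta_{\mu_\pg}(I)=\mu_\pg$, and cancel it against $2\tr(\Mm_{\mu_\pg}A)$ using the defining identity \eqref{mm1}. Your bookkeeping of the factor $\unm$ is in fact slightly more careful than the paper's displayed computation, and your closing observation (self-adjointness plus $\RR I\subset\Ker d\overline{\Ricci}|_I$ forces the image into $(\RR I)^{\perp}$) is a valid one-line alternative.
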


\begin{remark}
Equivalently, $d\ricci|_g\sca^2(M)^G\perp_g\RR g$ for any $g\in\mca^G$.  In the non-unimodular case, it is easy to see that $d\overline{\Ricci}|_IA\perp\RR I$ if and only if $\la A,\ad_\pg{H_{\mu_\pg}}\ra=\la AH_{\mu_\pg},H_{\mu_\pg}\ra$.  
\end{remark}

\begin{proof}
It follows from \eqref{mm1} that 
$$
\la d\overline{\Ricci}|_I A,I\ra = \la\delta_{\mu_{\pg}}(A),\delta_{\mu_\pg}(I)\ra + 2\la\Mm_{\mu_\pg}, A\ra = -\la\theta(A)\mu_\pg,\mu_\pg\ra + 2\la\Mm_{\mu_\pg}, A\ra = 0,
$$
and so the statement follows.  
\end{proof}

In the case when $G$ is compact, restricted to the subspace $\Ker\delta_g$ of divergence-free symmetric $2$-tensors, $d\ricci|_g=\unm\Delta_L$, where $\Delta_L$ is the {\it Lichnerowicz Laplacian} of $g$ (see \cite[Theorem 1.174, (d)]{Bss}).  Here $\delta_g:\sca^2(M)\rightarrow\Omega^1(M)$ is the divergence operator $\delta_g(T):=-\sum \nabla_{X_i}T(X_i,\cdot)$, where $\{ X_i\}$ is any local orthonormal frame.  The following formula for $\Delta_L$ therefore follows from Lemma \ref{dRicmu} and \eqref{ident-dRc}. 

\begin{corollary}\label{LL}
Let $M=G/K$ be a homogeneous space with $G$ compact, endowed with a reductive decomposition $\ggo=\kg\oplus\pg$.  Then the Lichnerowicz Laplacian of any $G$-invariant Riemannian metric $g$ on $M$ is given by
$$
\Delta_LT =  \unm S\circ\delta_{\mu_{\pg}}^t\delta_{\mu_{\pg}}(A)+A\Mm_{\mu_\pg} + \Mm_{\mu_\pg}A, 
$$
for any divergence-free $T=\la A\cdot,\cdot\ra\in \sym^2(\pg)^K \equiv \sca^2(M)^G$, where $A\in\sym(\pg)^K$ and $\ip := g_o\in\sym_+^2(\pg)^K$.
\end{corollary}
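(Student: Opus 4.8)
The plan is to first identify $\Delta_L$ with $2\,d\ricci|_g$ on the finite-dimensional space $\sca^2(M)^G$, and then to substitute into this the explicit expression for $d\ricci|_g$ provided by Lemma \ref{dRicmu}. I would start from the classical formula for the linearization of the Ricci tensor (see \cite[Theorem 1.174, (d)]{Bss}): at a metric $g$, for any symmetric $2$-tensor $h$,
\[
d\ricci|_g h = \unm\Delta_L h - \delta_g^*\delta_g h - \unm\Hess_g(\tr_g h),
\]
with $\delta_g$ the divergence and $\delta_g^*$ its formal adjoint. For $h=T\in\sca^2(M)^G$ the function $\tr_g T$ is $G$-invariant, hence constant on $M=G/K$, so $\Hess_g(\tr_g T)=0$; and by \cite[Lemma 2.2]{WngWng}, which is precisely the assertion that for $G$ compact every $G$-invariant symmetric $2$-tensor is divergence-free, $\delta_g T=0$, so the middle term vanishes as well. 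Since moreover $\Delta_L$ commutes with the isometric action of $G$ and thus preserves $\sca^2(M)^G$, this yields the operator identity $\Delta_L=2\,d\ricci|_g$ on $\sca^2(M)^G$.

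It then remains only to expand the right-hand side. Compactness of $G$ forces unimodularity, so $H_{\mu_\pg}=0$ and the formula of Lemma \ref{dRicmu} collapses to $d\overline{\Ricci}|_I A = \unm S\circ\delta_{\mu_\pg}^t\delta_{\mu_\pg}(A) + A\Mm_{\mu_\pg} + \Mm_{\mu_\pg}A$ for $A\in\sym(\pg)^K$. Combining this with the identifications \eqref{ident-dRc}, which read $d\ricci|_g T = \langle\unm\, d\overline{\Ricci}|_I A\cdot,\cdot\rangle$ for $T=\langle A\cdot,\cdot\rangle$, we obtain that $\Delta_L T = 2\,d\ricci|_g T$ corresponds to the operator $d\overline{\Ricci}|_I A$, which is exactly the claimed expression.

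There is no serious computational obstacle; the only point requiring care is quoting the two imported facts with the correct normalizations — the precise sign and scaling conventions in Besse's linearization identity (for $\Delta_L$, for $\delta_g$, and for the Hessian term) and the exact content of \cite[Lemma 2.2]{WngWng}. This last input is the one place where compactness of $G$ is genuinely used: divergence-freeness of invariant symmetric $2$-tensors can fail for non-compact unimodular $G$, which is why the corollary is restricted to the compact setting. Everything else is bookkeeping through the identifications fixed in Section \ref{preli}.
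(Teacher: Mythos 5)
Your proposal is correct and follows essentially the same route as the paper: the paper likewise invokes \cite[Lemma 2.2]{WngWng} to get $\sca^2(M)^G\subset\Ker\delta_g$, applies \cite[Theorem 1.174, (d)]{Bss} to conclude $d\ricci|_g=\unm\Delta_L$ on $\sca^2(M)^G$, and then substitutes the formula of Lemma \ref{dRicmu} via the identification \eqref{ident-dRc}, with unimodularity (from compactness) killing the $H_{\mu_\pg}$ terms. Your additional remarks on the vanishing of the Hessian of the (constant) trace and on normalization conventions are consistent with, and slightly more explicit than, the paper's treatment.
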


It is proved in \cite[Lemma 2.2]{WngWng} that if $G$ is compact, then $\delta_g(T)=0$ for any $g\in\mca^G$ and $T\in\sca^2(M)^G$ such that $T\circ g=g\circ T$ relative to a normal metric.

\subsection{Semisimple Lie groups}\label{simple-sec}
The formula in Lemma \ref{dRicmu} becomes quite simpler in the following case.  Let $M=G$ be a semisimple Lie group, so $\mca^G$ is the manifold of dimension $n(n+1)/2$ of all left-invariant metrics on $G$.  Consider a Cartan decomposition $\ggo=\hg\oplus\qg$ and the metric $g_{\kil}\in\mca^G$ defined by $\ip:=-\kil + \kil\in\sym_+(\ggo)$, where $\kil$ denotes the Killing form of $\ggo$.  Note that $G$ is compact if and only if $\qg=0$.  

Up to scaling, we can assume that $|\mu|^2=n$.  It is easy to see that therefore, $\Mm_\mu=-\unc I$, $\kil_\mu|_{\hg}=-I$, $\kil_\mu|_{\qg}=I$ and so $\Ricci_\mu|_{\hg}=\unc I$ and $\Ricci_\mu|_{\qg}=-\frac{3}{4}I$.  By fixing a $\ip$-orthonormal basis $\{ X_i\}$ of $\ggo$ such that each $X_i$ is either in $\hg$ or $\qg$ for all $i$, one obtains that $\ad{X_i}$ is either skew-symmetric or symmetric, respectively.  This implies that $|\ad{X}|=|X|$ for any $X\in\ggo$.  

We consider the Casimir operator $\cas_\ggo$ acting on the representation $\glg(\ggo)$ of $\ggo$ defined by $\tau(X)A:=[\ad{X},A]$, i.e., $\cas_\ggo=\sum\tau(X_i)^t\tau(X_i)$.  Thus $\cas_\ggo:\glg(\ggo)\longrightarrow\glg(\ggo)$ is given by
\begin{equation}\label{Cg-def}
\cas_\ggo(A):=\sum[(\ad{X_i})^t,[\ad{X_i},A]], \qquad\forall A\in\glg(\ggo).  
\end{equation}
Note that $\cas_\ggo\geq 0$, $\cas_\ggo(\sym(\ggo))\subset\sym(\ggo)$ and $\cas_\ggo(A)=0$ if and only if $[A,\ad{\ggo}]=0$.

\begin{lemma}\label{dRc-simple}
$d\overline{\Ricci}|_I\ad{X}=-\unm\ad{X}$ for any $X\in\qg$ and 
$$
d\overline{\Ricci}|_IA = \unm \cas_\ggo(A), \qquad\forall A\perp\ad{\qg}, \quad A\in\sym(\ggo). 
$$ 
\end{lemma}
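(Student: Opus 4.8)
The plan is to specialize Lemma~\ref{dRicmu} to this situation and then recognize the resulting expression. Since $M=G$ is a Lie group the isotropy $K$ is trivial, so $\ggo=\pg$ and $\mu_\pg=\mu$, and since $\ggo$ is semisimple $G$ is unimodular, i.e.\ $H_{\mu_\pg}=0$. Using moreover that $\Mm_\mu=-\unc I$ (recorded right before the statement), Lemma~\ref{dRicmu} collapses to
\[
d\overline{\Ricci}|_I A \;=\; \unm\, S\circ\delta_\mu^t\delta_\mu(A) \;-\; \unm A, \qquad\forall\, A\in\sym(\ggo).
\]
The first assertion is then immediate: for $X\in\qg$ the operator $\ad{X}$ is $\ip$-symmetric (this is precisely the defining property of the inner product associated with the Cartan decomposition), and $\delta_\mu(\ad{X})=-\theta(\ad{X})\mu=0$ since $\ad{X}$ is a derivation of $\ggo$ (Jacobi identity); hence $\delta_\mu^t\delta_\mu(\ad{X})=0$ and $d\overline{\Ricci}|_I\ad{X}=-\unm\ad{X}$.

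For the second assertion put $W:=\{A\in\sym(\ggo):A\perp\ad{\qg}\}$; the goal is to prove $S\circ\delta_\mu^t\delta_\mu=\cas_\ggo+I$ on $W$. I would start from the elementary identity $\delta_\mu A(X,\cdot)=\ad(AX)+[\ad{X},A]$ in $\glg(\ggo)$, valid for all $X\in\ggo$, and then compute $\la\delta_\mu A,\delta_\mu B\ra=\sum_i\la\delta_\mu A(X_i,\cdot),\delta_\mu B(X_i,\cdot)\ra$ for $A,B\in W$ by expanding the four resulting terms. The diagonal terms contribute $\sum_i\la\ad(AX_i),\ad(BX_i)\ra=\sum_i\la AX_i,BX_i\ra=\la A,B\ra$, using that $\ad\colon\ggo\to\glg(\ggo)$ is isometric (equivalently $|\ad{Z}|=|Z|$, which holds thanks to the normalization $|\mu|^2=n$). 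The bracket--bracket terms contribute $\sum_i\la[\ad{X_i},A],[\ad{X_i},B]\ra=\la\cas_\ggo(A),B\ra$, straight from the definition \eqref{Cg-def}. Each of the two cross terms, after moving $\tau(X_i)=[\ad{X_i},\cdot]$ to the other factor, takes the form $\sum_i\la[(\ad{X_i})^t,\ad(AX_i)],B\ra$ (resp.\ with the roles of $A,B$ interchanged); since $(\ad{X_i})^t=\pm\ad{X_i}$ on the Cartan summands, $\sum_i[(\ad{X_i})^t,\ad(AX_i)]=\ad(w)$ for some $w\in\ggo$, so this vanishes because $B\perp\ad{\ggo}$ --- and one should note that any $\ip$-symmetric operator is automatically orthogonal to the skew operators in $\ad{\hg}$, so $B\perp\ad{\qg}$ already forces $B\perp\ad{\ggo}$. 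Altogether $\la\delta_\mu^t\delta_\mu A,B\ra=\la(\cas_\ggo+I)A,B\ra$ for all $A,B\in W$.

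It remains to pass from this bilinear identity on $W$ to the operator identity. Here one uses that $S$ is the $\ip$-orthogonal projection of $\glg(\ggo)$ onto $\sym(\ggo)$ and that $\sym(\ggo)=W\oplus\ad{\qg}$ (orthogonal direct sum). For $A\in W$, the symmetric operator $S(\delta_\mu^t\delta_\mu A)$ is orthogonal to $\ad{\qg}$, because $\la\delta_\mu^t\delta_\mu A,\ad{Y}\ra=\la\delta_\mu A,\delta_\mu(\ad{Y})\ra=0$ for $Y\in\qg$ (again $\ad{Y}$ is a derivation); thus $S(\delta_\mu^t\delta_\mu A)\in W$. Likewise $\cas_\ggo(A)+A\in W$, since $\cas_\ggo$ preserves $\sym(\ggo)$ and $(\ad{\ggo})^\perp$ (each $\tau(X_i)$ maps $\ad{\ggo}$ to itself and is $\ip$-skew or $\ip$-symmetric). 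Two elements of $W$ pairing identically against all of $W$ must coincide, so $S(\delta_\mu^t\delta_\mu A)=\cas_\ggo(A)+A$, and substitution into the displayed formula gives $d\overline{\Ricci}|_I A=\unm\cas_\ggo(A)$, as desired. The only genuinely computational step is the expansion of $\la\delta_\mu A,\delta_\mu B\ra$, and there everything rests on the decomposition $\delta_\mu A(X,\cdot)=\ad(AX)+[\ad{X},A]$ together with the cancellation of the cross terms --- which is exactly the phenomenon that fails on $\ad{\qg}$, in accordance with the first assertion.
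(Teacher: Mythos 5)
Your proof is correct and follows essentially the same route as the paper's: specialize Lemma \ref{dRicmu} using $\Mm_\mu=-\unc I$, kill the $\delta_\mu^t\delta_\mu$ term on $\ad{\qg}$ via the derivation property, and expand $|\delta_\mu(A)|^2$ through the identity $\ad_{\delta_\mu(A)}X=\ad{AX}+[\ad{X},A]$, with the cross terms vanishing because a symmetric $A\perp\ad{\qg}$ is automatically orthogonal to all of $\ad{\ggo}$. The only difference is that you polarize and explicitly check that both sides preserve the subspace $W$ before concluding the operator identity, a step the paper leaves implicit after its quadratic-form computation.
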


\begin{proof}
By Lemma \ref{dRicmu} and the fact that $\Mm_\mu=-\unc I$, we have that $d\overline{\Ricci}|_I=\unm\delta_\mu^t\delta_\mu-\unm\id$, so the first statement of the lemma follows.  Now for any symmetric $A\perp\ad{\qg}$, it follows from \eqref{Cg-def} that, 
\begin{align*}
\la d\overline{\Ricci}|_IA,A\ra + \unm|A|^2 =&   
\unm |\delta_{\mu}(A)|^2 =
\unm\sum |\ad_{\delta_{\mu}(A)}{X_i}|^2 =  \unm\sum |\ad{AX_i} + [\ad{X_i},A]|^2 \\
=&  \unm\sum |\ad{AX_i}|^2 + |[\ad{X_i},A]|^2 +2\la\ad{AX_i},[\ad{X_i},A]\ra \\
=& \unm\sum |AX_i|^2 + \unm\la\cas_\ggo(A),A\ra +\sum \la[(\ad{X_i})^t,\ad{AX_i}],A\ra \\ 
=& \unm|A|^2 + \unm\la\cas_\ggo(A),A\ra +\sum \la\pm\ad{[X_i,AX_i]},A\ra \\ 
=& \unm|A|^2 + \unm\la\cas_\ggo(A),A\ra.
\end{align*}
The last equality follows from the fact that $A$ is also orthogonal to $\ad{\hg}$.  
\end{proof}

The following nice features of the prescribed Ricci curvature problem on simple Lie groups therefore hold.  

\begin{corollary}\label{simple-thm}
If $G$ is simple, then $g_{\kil}\in\mca^G_{\ricci}$ and $\mca^G_{inv}$ is open and dense in $\mca^G$.  In particular, $g_{\kil}$ is Ricci locally invertible  if $\scalar(g_{\kil})\ne 0$.  
\end{corollary}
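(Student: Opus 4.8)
The plan is to reduce everything to the single statement $g_{\kil}\in\mca^G_{\ricci}$, i.e.\ $\Ker d\overline{\Ricci}|_I=\RR I$, and then read off the remaining assertions from the general theory of Section~\ref{RLI-sec}. Since a simple $G$ is unimodular and nonabelian, the operator $d\overline{\Ricci}|_I$ is self-adjoint on $\sym(\ggo)$ (see the remark after Lemma~\ref{dRicmu}); here $K$ is discrete, so $\ggo=\pg$, $\mu=\mu_\pg$ and $\sym(\pg)^K=\sym(\ggo)$. Recall that for a Cartan decomposition $\ggo=\hg\oplus\qg$ one has $\ad{\qg}\subset\sym(\ggo)$ and $\ad$ is injective (semisimplicity), so I would split $\sym(\ggo)=\ad{\qg}\oplus W$ orthogonally, where $W:=(\ad{\qg})^{\perp}\cap\sym(\ggo)$. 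By Lemma~\ref{dRc-simple}, $d\overline{\Ricci}|_I$ acts as $-\unm\id$ on $\ad{\qg}$ and as $\unm\cas_{\ggo}$ on $W$; in particular it leaves $\ad{\qg}$ invariant, hence, being self-adjoint, it leaves $W$ invariant too. Therefore $\Ker d\overline{\Ricci}|_I=\Ker(\cas_{\ggo}|_{W})$, since the restriction to $\ad{\qg}$ is an isomorphism.

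So the crux is to show $\Ker(\cas_{\ggo}|_W)=\RR I$. First I would note that $\RR I\subset W$, because $\tr\ad{X}=0$ gives $I\perp\ad{\qg}$, and that $\RR I\subset\Ker\cas_{\ggo}$; it then remains to prove that every $A\in\sym(\ggo)$ with $\cas_{\ggo}(A)=0$ is scalar. By the properties of $\cas_{\ggo}$ recorded before Lemma~\ref{dRc-simple}, $\cas_{\ggo}(A)=0$ means $[A,\ad{\ggo}]=0$, i.e.\ $A$ lies in the commutant of the adjoint representation of $\ggo$; since $\ggo$ is simple this representation is irreducible over $\RR$, so the commutant is a real division algebra. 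Setting $B:=A-\tfrac{1}{n}(\tr A)I$, which is trace-free, symmetric and still in the commutant, $B$ is $\RR$-diagonalizable; if it had two distinct eigenvalues then some $B-\lambda I$ would be a nonzero, non-invertible element of a division algebra, a contradiction, so $B$ has a single eigenvalue and, being diagonalizable and trace-free, $B=0$. Hence $A\in\RR I$, which gives $\Ker d\ricci|_{g_{\kil}}=\RR g_{\kil}$ via \eqref{ident-dRc}, i.e.\ $g_{\kil}\in\mca^G_{\ricci}$.

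With $\mca^G_{\ricci}\ne\emptyset$ established, Lemma~\ref{RLI-lem} makes $\mca^G_{\ricci}$ open and dense, Theorem~\ref{RLI-thm}(iv) (using that $G$ is nonabelian) gives $\mca^G_{inv}\ne\emptyset$, and then Corollary~\ref{RLI-cor1} together with the openness of $\mca^G_{inv}$ yields that $\mca^G_{inv}$ is open and dense in $\mca^G$. For the final assertion, if $\scalar(g_{\kil})\ne0$ then $g_{\kil}\in\mca^G_{\ricci}\cap\mca^G_{\scalar}=\mca^G_{\widetilde{\ricci}}\subset\mca^G_{inv}$ by parts (ii) and (i) of Theorem~\ref{RLI-thm}, so $g_{\kil}$ is Ricci locally invertible.

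The hard part, and really the only nontrivial point, is the kernel computation: one must verify the block decomposition of $d\overline{\Ricci}|_I$ along $\ad{\qg}\oplus W$ (cleanly handled via self-adjointness, hence unimodularity) and then identify the symmetric part of the commutant of the adjoint representation with $\RR I$ — this is exactly where simplicity, rather than mere semisimplicity, enters, since on a sum of simple ideals there are obvious non-scalar symmetric operators commuting with $\ad\ggo$.
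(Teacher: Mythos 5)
Your proof is correct and follows essentially the same route as the paper: it reduces everything to $\Ker d\overline{\Ricci}|_I=\RR I$ via Lemma \ref{dRc-simple} and the fact that $\Ker\cas_\ggo|_{\sym(\ggo)}=\RR I$ for $G$ simple, then invokes Lemma \ref{RLI-lem}, Theorem \ref{RLI-thm} and Corollary \ref{RLI-cor1}. The only difference is that you spell out two points the paper leaves implicit, namely the invariance of the splitting $\sym(\ggo)=\ad{\qg}\oplus W$ under the self-adjoint operator $d\overline{\Ricci}|_I$ and the real Schur-lemma argument identifying the symmetric part of the commutant of the (irreducible) adjoint representation with $\RR I$ -- both are handled correctly.
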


\begin{proof}
The first statement follows from Lemma \ref{dRc-simple} and the fact that $\Ker\cas_\ggo|_{\sym(\ggo)}=\RR I$ if $G$ is simple.  The rest of the corollary follows from  Theorem \ref{RLI-thm} and Corollary \ref{RLI-cor2}.  
\end{proof}

We note that $\scalar(g_{\kil})=\unc(\dim{\hg}-3\dim{\qg})$.  For example, it is nonzero for any complex simple Lie group ($\dim{\hg}=\dim{\qg}$) and it vanishes for $G=\SO(7,1)$.

\subsection{Aut-isometry class variations}\label{equiv-dRic}
We study in this section Question (Q4) from Section \ref{quest}.  Recall from Section \ref{equiv-sec} the natural action of the group $\Aut(G/K)$ on $\sym_+(\pg)^K$.  

\begin{lemma}\label{dRic-equiv}
The tangent space of the orbit at $I\in\sym_+(\pg)^K$ is given by
$$
T_I(\Aut(G/K)\cdot I) \subset \left\{ S(D):\underline{D}\in\Der(\ggo/\kg)\right\} \subset\sym(\pg)^K,
$$
and
$$
d\overline{\Ricci}|_I S(D)= D^t\Ricci_{\mu} + \Ricci_{\mu}D, \qquad\forall \underline{D}:=\left[\begin{matrix} \ast&\ast\\ 0&D\end{matrix}\right]\in\Der(\ggo/\kg),
$$
such that $S(D)\in T_I(\Aut(G/K)\cdot I)$
\end{lemma}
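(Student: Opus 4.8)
The statement has two parts: identifying the tangent space to the $\Aut(\ggo/\kg)$-orbit through $I$, and computing $d\overline{\Ricci}|_I$ on that tangent space. For the first part, I would differentiate the orbit map at the identity. Recall from Section \ref{equiv-sec} that the action of $\underline f\in\Aut(\ggo/\kg)$ on $h\in\sym_+(\pg)^K$ is given by $\underline f\cdot h=\left((f^{-1})^t h^2 f^{-1}\right)^{1/2}$, and the Lie algebra of $\Aut(\ggo/\kg)$ is $\Der(\ggo/\kg)$. So I would take a curve $\underline{f}(t)=\exp(t\underline{D})$ with $\underline D\in\Der(\ggo/\kg)$, write $\underline D=\left[\begin{smallmatrix}\ast&\ast\\0&D\end{smallmatrix}\right]$, and differentiate $t\mapsto\underline{f}(t)\cdot I=\left(f(t)^{-t}f(t)^{-1}\right)^{1/2}$ at $t=0$. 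Since $\ddt|_0 f(t)^{-1}=-D$, a short computation with the square-root gives $\ddt|_0\,\underline f(t)\cdot I=-\tfrac12(D+D^t)=-S(D)$ (the sign is immaterial since $D$ ranges over a linear space), which yields $T_I(\Aut(\ggo/\kg)\cdot I)=\{S(D):\underline D\in\Der(\ggo/\kg)\}$. One should check this subspace lies in $\sym(\pg)^K$, which follows because $\Ad K\subset\Aut(\ggo/\kg)$ normalizes, so the orbit stays inside $\sym_+(\pg)^K$.

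\textbf{The derivative computation.} For the formula for $d\overline{\Ricci}|_I S(D)$, the cleanest route is equivariance rather than plugging into the formula of Lemma \ref{dRicmu}. The Ricci curvature transforms naturally: for $\underline f\in\Aut(\ggo/\kg)$ one has $\ricci(\underline f\cdot g)=\underline f\cdot\ricci(g)$, which at the operator level and at $h=I$ reads $\overline{\Ricci}(\underline f(t)\cdot I)=\underline f(t)\cdot\overline{\Ricci}(I)$, i.e. $\overline{\Ricci}(\underline f(t)\cdot I)$ is the conjugation-type action of $\underline f(t)$ on the fixed operator $\Ricci_\mu$. Concretely, $\ricci(g)$ is $G$-invariant, so $\overline{\Ricci}(\underline f(t)\cdot I)$ is obtained from $\Ricci_\mu$ by the congruence dictated by \eqref{aut-act}: the tensor $\la\Ricci_\mu\cdot,\cdot\ra$ pulls back to $\la\Ricci_\mu f(t)^{-1}\cdot,f(t)^{-1}\cdot\ra$, whose associated operator with respect to $\ip$ is $f(t)^{-t}\Ricci_\mu f(t)^{-1}$. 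Wait — I must be careful: the metric also changes, so $\overline{\Ricci}(\underline f(t)\cdot I)$ is the operator representing $\ricci$ of the metric $\la (\underline f(t)\cdot I)\cdot,(\underline f(t)\cdot I)\cdot\ra$ with respect to $\ip$. The correct bookkeeping is: $\ricci(g_{h})=\la h\Ricci_{h\cdot\mu}h\cdot,\cdot\ra$ with $g_h=\la h\cdot,h\cdot\ra$; and $g_{\underline f(t)\cdot I}$ is equivariantly isometric to $(G,\ip)$ via $\underline f(t)$, so its Ricci operator (w.r.t.\ $\ip$) is $\Ad$-conjugate, giving $\overline{\Ricci}(\underline f(t)\cdot I)=(\underline f(t)\cdot I)^{-1}\,\overline{\Ricci}_{f(t)}\,(\underline f(t)\cdot I)$ where the middle term is $\Ricci$ of the isometric copy. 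Differentiating at $t=0$ and using $\ddt|_0(\underline f(t)\cdot I)=-S(D)$ together with $\overline{\Ricci}(I)=\Ricci_\mu$, the terms organize into $d\overline{\Ricci}|_I S(D)=D^t\Ricci_\mu+\Ricci_\mu D$.

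\textbf{Alternative and main obstacle.} If the equivariance argument turns out to require more care than it is worth (the congruence vs.\ conjugation subtlety, plus the fact that $f$ need not be $\ip$-orthogonal), the fallback is to substitute $A=S(D)$ directly into Lemma \ref{dRicmu}. In that case one uses that $\delta_{\mu_\pg}^t\delta_{\mu_\pg}(S(D))$ simplifies dramatically because $\underline D$ is a derivation: $\theta(D)\mu_\pg$ is essentially governed by $\underline D\cdot\mu=0$ (derivation condition), so $\delta_{\mu_\pg}(D)=-\theta(D)\mu_\pg$ reduces to a correction term coming only from the $\kg$-components and the projection $\proy_\pg$; combined with $2S(D)\Mm_{\mu_\pg}$-type terms from the formula and the identity $\Ricci_\mu=\Mm_{\mu_\pg}-\tfrac12\kil_\mu|_\pg$ (in the unimodular case $H_{\mu_\pg}=0$), everything collapses to $D^t\Ricci_\mu+\Ricci_\mu D$. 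The main obstacle either way is the same: correctly tracking how the presence of $\kg$ (i.e.\ the non-triviality of $\proy_\pg$ in $\mu_\pg$ versus the full bracket $\mu$) interacts with the derivation condition $\underline D(\kg)\subset\kg$, since $\underline D$ is a derivation of $\mu$ but $D$ need not be a ``derivation'' of $\mu_\pg$. I expect the equivariance approach to sidestep this cleanly, which is why I would try it first, falling back to the direct substitution only to double-check.
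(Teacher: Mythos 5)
Your proposal follows the paper's proof: the tangent space is obtained by differentiating $t\mapsto\bigl(\bigl(f(t)^{-1}\bigr)^tf(t)^{-1}\bigr)^{1/2}$ exactly as you describe, and the derivative formula comes from the equivariance identity $\overline{\Ricci}(\underline{f}\cdot I)=(f^{-1})^t\Ricci_\mu f^{-1}$, which is precisely your first bookkeeping ("the associated operator with respect to $\ip$ is $f(t)^{-t}\Ricci_\mu f(t)^{-1}$"). One caution about your ``Wait'' passage: the object $\overline{\Ricci}(h)$ is by definition the operator of the Ricci \emph{tensor} of $g_h$ relative to the \emph{fixed} background $\ip$, i.e.\ $\overline{\Ricci}(h)=h\Ricci_{h\cdot\mu}h$ as in \eqref{Ricb}; the expression $(\underline{f}\cdot I)^{-1}\,\Ricci_{h_f\cdot\mu}\,(\underline{f}\cdot I)$ you substitute there is instead the Ricci \emph{operator} $\Ricci_{g_{h_f}}=h_f^{-2}\overline{\Ricci}(h_f)=f\Ricci_\mu f^{-1}$, whose derivative is the commutator $[D,\Ricci_\mu]$ and not $D^t\Ricci_\mu+\Ricci_\mu D$. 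So you should keep the first version and drop the ``correction''; with that fixed, your argument coincides with the paper's (your fallback via Lemma \ref{dRicmu} is not needed, and is only used in the paper, in the unimodular case, for Remark \ref{dRic-equiv-rem}).
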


\begin{proof}
Recall from Section \ref{equiv-sec} that $\Aut(G/K)\cdot I\subset \Aut(\ggo/\kg)\cdot I$.  If $\underline{f}(t)\in\Aut(\ggo/\kg)$ is a differentiable curve such that $t\in(-\epsilon,\epsilon)$, $\underline{f}(0)=I$ and
$$
\left.\ddt\right|_0\underline{f}(t)=\underline{D}\in\Der(\ggo/\kg), 
$$
then $D=\left.\ddt\right|_0f(t)$ and 
$$
\left.\ddt\right|_0\underline{f}(t)\cdot I = \left.\ddt\right|_0\left(\left(f(t)^{-1}\right)^tf(t)^{-1}\right)^{1/2} = -\unm (D^t+D) = -S(D),
$$
and so the statement about the tangent space follows.

For any $\underline{f}\in\Aut(\ggo/\kg)$ which is the derivative of an element in $\Aut(G/K)$, one has that $\overline{\Ricci}(\underline{f}\cdot I) = \left(f^{-1}\right)^t\Ricci_\mu f^{-1}$.  Indeed, by setting $h_f:=\underline{f}\cdot I = \left(\left(f^{-1}\right)^tf^{-1}\right)^{1/2}$, it follows that  
\begin{align*}
\la\overline{\Ricci}(h_f)\cdot,\cdot\ra =& \ricci_{\la h_f\cdot,h_f\cdot\ra} = \ricci_{\la f^{-1}\cdot,f^{-1}\cdot\ra} =  \ricci_\mu(f^{-1}\cdot,f^{-1}\cdot) \\
=& \la\Ricci_\mu f^{-1}\cdot,f^{-1}\cdot\ra =  \la (f^{-1})^t\Ricci_\mu f^{-1}\cdot,\cdot\ra,
\end{align*}
where the third equality holds because $\underline{f}\in\Aut(\ggo/\kg)$.  It follows that if $\underline{f}(t)\in\Aut(\ggo/\kg)$ is as above, then
$$
d\overline{\Ricci}|_I (-S(D))= \left.\ddt\right|_0\overline{\Ricci}(\underline{f}(t)\cdot I) = \left.\ddt\right|_0\left(f(t)^{-1}\right)^t\Ricci_\mu f(t)^{-1} =  -D^t\Ricci_{\mu} - \Ricci_{\mu}D,
$$
which concludes the proof of the lemma.
\end{proof}

\subsection{Ricci local invertibility}\label{exa-sec2}
We now give some applications of the moving bracket approach, including the above two lemmas, to the study of Ricci local invertibility (see Section \ref{RLI-sec}).  Recall from Theorem \ref{RLI-thm} that if $\Ker d\overline{\Ricci}|_I = \RR I$ and $\scalar(g)\ne 0$, then $g\in\mca_{inv}^G$, and conversely, $\Ker d\overline{\Ricci}|_I = \RR I$ for any $g\in\mca_{inv}^G$.  

\begin{example}
If $(G/K,g)$ is a symmetric space then $\mu_\pg=0$ and so $d\overline{\Ricci}|_I=0$, according to the fact   that the function $\ricci$ is constant: $\overline{\Ricci}(h)=-\unm\kil$ for any $h\in\sym_+(\pg)^K$, or equivalently, $\ricci(g)=-\unm\kil|_{\pg\times\pg}$ for any $g\in\mca^G$.  
\end{example}

\begin{example}
It follows from Lemma \ref{dRic-equiv} that for any Einstein metric $g$, say $\ricci(g)=\rho g$ (i.e., $\Ricci_\mu=\rho I$),  one has that $d\overline{\Ricci}|_I S(D)=2\rho S(D)$ for any $\underline{D}\in\Der(\ggo/\kg)$.  Thus the equivariant isometry class of a non-flat Einstein $g\in\mca^G$ contributes with nothing to the kernel of $d\overline{\Ricci}|_I$.
\end{example}

\begin{example}
Consider the solvable Lie group $M=G$ with Lie algebra $\mu(X_1,X_i)=AX_i$, $i=2,\dots,n$, where $A$ is a traceless symmetric $(n-1)\times (n-1)$ matrix, and the left-invariant metric $g$ such that the basis $\{ X_i\}$ is orthonormal.  It easily follows that $\Ricci_\mu X_1=-\tr{A^2}X_1$ and $\Ricci_\mu X_i=0$ for all $i=2,\dots,n$.  We therefore obtain from Lemma \ref{dRic-equiv} that $d\overline{\Ricci}|_I S(D)=0$ for any $D\in\Der(\ggo)$ given by $DX_1=0$ and $[D|_\ngo,A]=0$, where $\ngo$ is the subspace generated by $X_i$, $i=2,\dots,n$ (moreover, $\overline{\Ricci}(e^{tS(D)})=\overline{\Ricci}(I)$ for any $t$).  In particular, $\dim{\Ker d\overline{\Ricci}|_I}\geq n-2$, and thus $g$ is not Ricci locally invertible for any $n\geq 4$.  This metric $g$ is a Ricci soliton; indeed $\Ricci_\mu+(\tr{A^2})I\in\Der(\mu)$ (see \cite{alek}).   
\end{example}

\begin{example}
Let $M=G$ be the $4$-dimensional nilpotent Lie group with Lie algebra $\mu(X_1,X_2)=X_3$, $\mu(X_1,X_3)=X_4$ and let $g$ be the left-invariant metric on $G$ determined by the inner product $\ip$ such that $\{ X_i\}$ is orthonormal.  It is easy to see that $\Ricci_\mu=\Mm_\mu=\Diag(-1,-\unm,0,\unm)$ and so if $D\in\Der(\ggo)$ is given by $DX_1=X_3$ and $DX_i=0$, $i=2,3,4$, then by Lemma \ref{dRic-equiv}, 
$$
d\overline{\Ricci}|_I S(D)= D^t\Ricci_{\mu} + \Ricci_{\mu}D =0, 
$$
from which follows that $g\notin\mca^G_{inv}$.  However, it is easy to check that $\overline{\Ricci}(e^{tS(D)})$ is not constant in $t$.  Note that $g$ is also a Ricci soliton since $\Ricci_\mu+\frac{3}{2}I\in\Der(\mu)$ (see \cite{alek}).  
\end{example}

In the following examples, the relatively low dimension of $\mca^G$ allows us to analyze the rank of $d\ricci|_g$ in a direct way, with the help of a mathematical software.  

\begin{example}\label{s5s1}
We consider $M=\SSS^5\times\SSS^1$ and $G=\SU(3)\times S^1$, so $K=\SU(2)$.  The reductive decomposition $\ggo=\sug(2)\oplus\pg$, where $\pg=\pg_1\oplus\RR$ and $\pg_1$ is the $\kil_{\sug(3)}$-orthogonal complement of $\sug(2)$ in $\sug(3)$, decomposes in $K$-irreducible subrepresentations as $\pg=\CC^2\oplus\RR_1\oplus\RR$, where $\pg_1=\CC^2\oplus\RR_1$.  Therefore, for each $h \in \sym_+(\pg)^K$ we have,   
$$
h = \left[\begin{smallmatrix}
 a & & & &&&\\   &a & & &&&\\   & & &a &&&\\ &&  &&a&&\\&&&&&b&t\\&&&&&t&c
\end{smallmatrix}\right], \quad a,b,c>0, \quad bc>t^2, \qquad   
\overline{\Ricci}(h)= \left[\begin{smallmatrix}
 \alpha & & & &&&\\   &\alpha & & &&&\\   & & &\alpha &&&\\ &&  &&\alpha&&\\&&&&&\beta&\tau\\ &&&&&\tau&\gamma
\end{smallmatrix}\right], \quad \alpha,\beta,\gamma,\tau\in\RR. 
$$
Using \eqref{Ric}, a straightforward computation gives that
$$
\alpha= \tfrac{4a^2-b^2-t^2}{8a^2}, \quad \beta=\tfrac{(b^2+t^2)^2}{4a^4}, \quad \gamma=\tfrac{(b+c)^2t^2}{4a^4}, \quad \tau= \tfrac{(b^2+t^2)(b+c)t}{4a^4}.
$$
Note that $\beta \gamma = \tau^2$.  Thus the rank of $d\overline{\Ricci}_h$ is always $\leq 2$ and so $\mca^G_{inv}=\emptyset$.  Alternatively, it can be checked that $\dim{\Ker d\overline{\Ricci}_h}\geq 2$ for any $h \in \sym_+(\pg)^K$.  
\end{example}

\begin{example}\label{s5s5}
For $M=\SSS^5\times\SSS^5$ and $G=\SU(3)\times\SU(3)$, one has that $K=\SU(2)\times\SU(2)$ and the reductive decomposition $\ggo=\kg\oplus\pg$, orthogonal relative to $\kil_{\ggo}$.  The decomposition in $K$-irreducible subrepresentations is given by $\pg=(\CC^2)_1\oplus(\CC^2)_2\oplus\RR_1\oplus\RR_2$.  Each $h \in \sym_+(\pg)^K$ is therefore identified with a $5$-upla $(a,b,c,d,t)$, where $a,b,c,d>0$ and $cd>t^2$.   It is straightforward to obtain that $\overline{\Ricci}(h)=(\alpha,\beta,\gamma,\delta,\tau)$, where
$$
\begin{array}{c}
\alpha= \frac{4a^2-c^2-t^2}{8a^2}, \quad \beta=\frac{4b^2-d^2-t^2}{8b^2}, \quad \gamma=\frac{b^4(c^2+t^2)^2 + t^2a^4(c+d)^2 -2a^4b^4}{4a^4b^4} \\ \\
\delta=\frac{a^4(d^2+t^2)^2 + t^2b^4(c+d)^2 -2a^4b^4}{4a^4b^4}, \quad \tau=\frac{(c+d)(c^2 b^4 + t^2 b^4+a^4 d^2+t^2 a^4)t}{4a^4b^4}. 
\end{array}
$$
It is also easy to check that $\dim{\Ker d\overline{\Ricci}_h}=1$ for any $h=(1,1,1,1,t)$, $0<t<1$ and that $\scalar(h)=\tfrac{-2t^6+18 t^4 -34 t^2+10}{4(1-t^2)^2}$, which only vanishes at a single $t_0\in (0,1)$.  This implies that $h$ is Ricci locally invertible for any $t\ne t_0$, so $\mca^G_{inv}$ is nonempty and consequently open and dense.  
\end{example}

\begin{example}\label{s7s5}
Similarly to the above example, we consider $M=\SSS^7\times\SSS^5$ and $G=\SU(4)\times\SU(3)$, so $K=\SU(3)\times\SU(2)$ and $\pg=\CC^3\oplus\CC^2\oplus\RR_1\oplus\RR_2$.  It is straightforward to obtain that the function $\overline{\Ricci}(a,b,c,d,t)=(\alpha,\beta,\gamma,\delta,\tau)$ satisfies that 
$$
d\overline{\Ricci}_{(1,1,1,1,t)}=\left[\begin{smallmatrix} 
0&-\frac{1}{6}& \frac{1}{6} + \frac{1}{6} t^2& 0&-\frac{1}{6}t \\ \frac{1}{4} +  \frac{1}{4} t^2& 0 & 0 & - \frac{1}{4}& - \frac{1}{4}t\\ 
-4 t^2 & 1+2 t^2 & -1-2 t^2-t^4 & t^2 & t (3+t^2) \\ 
-1-2 t^2-t^4 & t^2 & -4 t^2 & 1+2 t^2 & t (3+t^2)\\ -2 t (1+t^2) & \frac{t}{2}  (3+t^2) & -2 t (1+t^2) & \frac{t}{2}  (3+t^2) & 3 t^2+1
\end{smallmatrix}\right], 
$$
and that this matrix has a one-dimensional kernel for any $0<t<1$.  Since the scalar curvature is given by $\scalar(t)=\tfrac{-t^6+12 t^4-23 t^2+8}{2(1-t^2)^2}$, we obtain that $\mca^G_{inv}$ is open and dense in $\mca^G$.  
\end{example}

\begin{example}\label{s3s1}
If $M=\SSS^3\times\SSS^1$ and $G=\SU(2)\times S^1$, then $\mca^G$ is precisely the set of all left-invariant metrics on $M=G$.  Consider the ordered basis $\{X_1,\dots, X_4\}$ of $\ggo=\sug(2) \times \RR$, where $\{X_1,\dots, X_3\}$ is the basis of $\sug(2)$ given in Example \ref{dim3}.  Thus the function $\overline{\Ricci}$ depends on $10$ variables determined by 
$$
h = \left[\begin{smallmatrix}
a & x & y & r \\
x & b & z & s\\ y& z& c&t\\ r&s&t&d 
\end{smallmatrix}\right]\in\sym_+(\ggo). 
$$
A straightforward computation gives that the rank of $d\overline{\Ricci}_h$ is $9$ for $a=1$, $b=c=d=2$, $0<t<2$ and $x=y=z=r=s=0$, and the scalar curvature equals $\scalar(t)=\tfrac{-t^6+14 t^4-73 t^2+60}{16(4-t^2)^2}$.  Thus $\mca^G_{inv}$ is nonempty and so open and dense.  Unlike the two above examples, $\dim{\Ker d\overline{\Ricci}_h}=2$ for any metric with $a=b=c=d=1$, $0<t<1$ and $x=y=z=r=s=0$.
\end{example}

\section{Naturally reductive case}\label{natred-sec}

Let $M^n$ and $G$ be as in Section \ref{preli}.  A metric $g\in\mca^G$ is said to be {\it naturally reductive with respect to} $G$ if there exists a {\it reductive complement} $\pg$ (i.e., a reductive decomposition $\ggo=\kg\oplus\pg$) such that
$$
\la [X,Y]_\pg,Y\ra =0, \qquad\forall X,Y\in\pg,
$$
where $\ip:=g_o\in\sym_+(\pg)^K$.  Equivalently, the map $\ad_\pg{X}:\pg\rightarrow\pg$ is skew-symmetric (i.e., $\exp{tX}\cdot o$ is a geodesic) for any $X\in\pg$.  Note that $G$ is necessarily unimodular.  Since the condition may strongly depend on the reductive complement, we shall make clear sometimes that the metric $g$ is {\it naturally reductive with respect to $G$ and $\pg$} if necessary.  A {\it naturally reductive complement} is the reductive complement of some naturally reductive metric with respect to $G$.  

Naturally reductive spaces were studied by Kostant back in 1956.  The following discussion is strongly influenced by the concepts and results given in \cite{Kst2}.   

For any reductive decomposition $\ggo=\kg\oplus\pg$, 
$$
\overline{\ggo}:=\pg+[\pg,\pg]=[\pg,\pg]_\kg\oplus\pg,
$$ 
is always an ideal of $\ggo$, where the subscript $\kg$ denotes projection on $\kg$ relative to $\ggo=\kg\oplus\pg$.  Moreover, if $g$ is naturally reductive with respect to $G$ and $\pg$, then the normal connected Lie subgroup $\overline{G}\subset G$ with Lie algebra $\overline{\ggo}$ is also transitive on $M$.  Indeed, any point in $M$ can be attained with a geodesic departing from $o$, which must be of the form $\exp{tX}\cdot o$ for some $X\in\pg$ and so $\exp{tX}\in\overline{G}$ for all $t$.  

\begin{definition}\cite{Kst2}  
A reductive complement $\pg$ is called {\it pervasive} when $\overline{\ggo}=\ggo$ (i.e., $[\pg,\pg]_\kg=\kg$).  
\end{definition}

A given homogeneous space can admit pervasive and non-pervasive naturally reductive complements at the same time (see Example \ref{perv-exa} below).  

Note that $g\in\mca^{\overline{G}}$ and it is also a naturally reductive metric on the homogeneous space $M=\overline{G}/\overline{K}$, where $\overline{K}:=\overline{G}\cap K$, with respect to $\overline{G}$ and the pervasive reductive decomposition $\overline{\ggo}=\overline{\kg}\oplus\pg$, where $\overline{\kg}:=[\pg,\pg]_\kg$.  

Under the presence of an $\ad{\ggo}$-invariant symmetric bilinear form $Q$ on $\ggo$ such that $Q(\kg,\pg)=0$ and $\ip=Q|_{\pg\times\pg}$, it is clear that $g$ is naturally reductive with respect to $G$ and $\pg$.  It also follows that $\pg$ is pervasive since the $Q$-orthogonal complement of $\overline{\ggo}$ is an ideal contained in $\kg$ which must vanishes by almost-effectivness.  Moreover, $Q$ is necessarily non-degenerate; indeed, for any $Z\in\kg$ there exists $X\in\pg$ such that $[Z,X]\ne 0$ by almost-effectiveness and so $Q(Z,[X,[Z,X]])=Q([Z,X],[Z,X])>0$.  Recall that if $G$ is compact and $Q>0$ then $g$ is called {\it normal}, and if $G$ is compact semi-simple and $Q=-\kil_{\ggo}$, then $g$ is called {\it standard}.

Remarkably, the converse assertion holds.   It was proved by Kostant in the compact case and an alternative proof was given by D'Atri and Ziller, without assuming $G$ compact.  

\begin{theorem}\cite[Theorem 4]{Kst2}\label{Q-thm}, \cite[p.4]{DtrZll}.
If $g$ is naturally reductive with respect to $G$ and a pervasive $\pg$, then there exists a unique non-degenerate $\ad{\ggo}$-invariant symmetric bilinear form $Q$ on $\ggo$ such that $Q(\kg,\pg)=0$ and $\ip=Q|_{\pg\times\pg}$.  
\end{theorem}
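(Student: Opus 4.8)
The plan is to construct $Q$ essentially by hand: it is prescribed on $\pg\times\pg$ (to be $\ip$) and on $\kg\times\pg$ (to be $0$), so the only freedom is $Q|_{\kg\times\kg}$, and I would first observe that $\ad\ggo$-invariance \emph{forces} even that. Writing $[X,Y]_\kg:=\proy_\kg[X,Y]$, $[X,Y]_\pg:=\proy_\pg[X,Y]$, and using that $\ip$ is $\Ad(K)$-invariant (so each $\ad_\pg Z:=\ad Z|_\pg$, $Z\in\kg$, is $\ip$-skew-symmetric): if $Q$ exists, then for $V\in\kg$ and $X,Y\in\pg$ the identity $0=Q([X,V],Y)+Q(V,[X,Y])$ together with $[X,V]\in\pg$ and $Q(\kg,\pg)=0$ gives $Q(V,[X,Y]_\kg)=\la[V,X],Y\ra$. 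Since $\pg$ is pervasive, the elements $[X,Y]_\kg$ span $\kg$, so $Q|_{\kg\times\kg}$ is determined; hence $Q$ is unique. Also, once existence is known, non-degeneracy is automatic by the argument already recorded in the paragraph preceding the statement (the radical of an $\ad\ggo$-invariant form is an ideal, and here it lies in $\kg$ because $Q|_{\pg\times\pg}$ is positive definite, so it vanishes by almost-effectiveness). Thus everything reduces to \emph{existence}, and the candidate is $Q|_{\pg\times\pg}:=\ip$, $Q(\kg,\pg):=0$, $Q(V,[X,Y]_\kg):=\la[V,X],Y\ra$.

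The crux is to show this candidate is well defined, i.e.\ that $\sum_j[X_j,Y_j]_\kg=0$ implies $\sum_j\la[V,X_j],Y_j\ra=0$ for all $V\in\kg$. Equivalently, setting $\omega_V(X,Y):=\la[V,X],Y\ra$ (an alternating $2$-form on $\pg$, since $\ad_\pg V$ is $\ip$-skew) and $b:\Lambda^2\pg\to\kg$, $b(X\wedge Y):=[X,Y]_\kg$, I must prove $\omega_V$ vanishes on $\Ker b$.

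Here I would exploit two structural facts about the naturally reductive datum. First, for $V\in\kg$ the operator $\ad_\pg V$ is a skew-symmetric \emph{derivation} of the (non-associative) algebra $(\pg,[\cdot,\cdot]_\pg)$ — the derivation law is the $\pg$-component of Jacobi, $[V,[X,Y]]=[[V,X],Y]+[X,[V,Y]]$ — so $\exp(t\ad_\pg V)$ is an $\ip$-orthogonal automorphism of $(\pg,[\cdot,\cdot]_\pg)$ and therefore fixes the totally alternating $3$-form $T(X,Y,Z):=\la[X,Y]_\pg,Z\ra$ (alternating precisely by natural reductivity); differentiating at $t=0$ yields $T(\ad_\pg V X,Y,Z)+T(X,\ad_\pg V Y,Z)+T(X,Y,\ad_\pg V Z)=0$. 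Second, projecting the Jacobi identity to $\kg$ and using $[[X,Y]_\kg,Z]\in\pg$ shows $\sigma(X,Y,Z):=[X,Y]_\pg\wedge Z+[Y,Z]_\pg\wedge X+[Z,X]_\pg\wedge Y\in\Ker b$. Combining the two, a short computation gives $\omega_V(\sigma(X,Y,Z))=-\bigl(T(X,Y,\ad_\pg V Z)+T(Y,Z,\ad_\pg V X)+T(Z,X,\ad_\pg V Y)\bigr)=0$. A dimension count then streamlines matters: since $\ad_\pg:\kg\hookrightarrow\sog(\pg)$ is injective by almost-effectiveness, $V\mapsto\omega_V$ is injective, so the common kernel $\{\beta:\omega_V(\beta)=0\ \forall V\}$ and $\Ker b$ have the same dimension $\tfrac12\dim\pg(\dim\pg-1)-\dim\kg$; hence it suffices to prove \emph{either} inclusion, and both contain the span of the $\sigma(X,Y,Z)$. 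The main obstacle is thus to show that the $\sigma(X,Y,Z)$, together with the decomposable $X\wedge Y$ with $[X,Y]_\kg=0$, span all of $\Ker b$ — that is, to control every linear relation among the brackets $[X,Y]_\kg$. This is the genuinely nontrivial point, where pervasiveness and the full naturally reductive structure must be used, and it is the heart of Kostant's (and D'Atri--Ziller's) argument.

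Granting well-definedness, the rest is bookkeeping. Symmetry of $Q|_{\kg\times\kg}$ follows from its explicit formula by one more application of Jacobi together with natural reductivity. For $\ad\ggo$-invariance I would run the case analysis on the $\kg$/$\pg$-types of the three entries in $Q([A,B],C)+Q(B,[A,C])=0$: cases with at most one entry in $\pg$ are immediate from $Q(\kg,\pg)=0$ and $\Ad(K)$-invariance of $\ip$; the case with all three entries in $\pg$ is exactly where natural reductivity (total antisymmetry of $T$) is used; the mixed cases with two entries in $\pg$ collapse to the defining relation $Q(V,[X,Y]_\kg)=\la[V,X],Y\ra$; and the all-in-$\kg$ case is $\kg$-invariance of $Q|_{\kg\times\kg}$, deduced from that same relation, well-definedness, and Jacobi. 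This yields the desired $Q$, which by the first paragraph is the unique one.
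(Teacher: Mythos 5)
Your reduction is sound as far as it goes, and it matches what the paper actually records: the paper does not prove this theorem but quotes it from Kostant and D'Atri--Ziller, remarking only that the whole content is that the ``compelled condition'' $Q([X,Y]_\kg,[X',Y']_\kg)=-Q(Y,[X,[X',Y']_\kg])$ \emph{works as a definition} of $Q|_{\kg\times\kg}$. Your uniqueness argument (invariance forces $Q(V,[X,Y]_\kg)=\la [V,X],Y\ra$, and pervasiveness makes the elements $[X,Y]_\kg$ span $\kg$), your non-degeneracy argument, and your invariance bookkeeping are all correct and routine. But you stop at exactly the point where the theorem lives: you \emph{grant} well-definedness rather than prove it. That is a genuine gap, not a detail --- everything you do establish is the easy part. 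There is also a structural problem with the route you sketch for closing it: you want to show that $\Ker b$ is spanned by the $\sigma(X,Y,Z)$ together with the decomposables $X\wedge Y$ satisfying $[X,Y]_\kg=0$, but for those decomposables you would still need $\la [V,X],Y\ra=0$ for all $V\in\kg$, which is the well-definedness question again in miniature; so even if the spanning claim held, the argument would not close, and your dimension count only helps once one full inclusion is actually established.

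The step that is missing is the pair-exchange identity
\[
\la [[X,Y]_\kg,X'],Y'\ra \;=\; \la [[X',Y']_\kg,X],Y\ra, \qquad \forall X,Y,X',Y'\in\pg,
\]
which is precisely the paper's compelled condition rewritten using $Q(\kg,\pg)=0$ and $\ip=Q|_{\pg\times\pg}$. Once it is known, well-definedness is immediate: if $\sum_j[X_j,Y_j]_\kg=0$ and $V=\sum_k[X'_k,Y'_k]_\kg$ (pervasiveness), then $\sum_j\la [V,X_j],Y_j\ra=\sum_{j,k}\la [[X_j,Y_j]_\kg,X'_k],Y'_k\ra=0$; and the symmetry of $Q|_{\kg\times\kg}$ is the same identity, so your remaining bookkeeping goes through. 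The identity itself is proved from the Jacobi identity together with the total antisymmetry of $T(X,Y,Z)=\la [X,Y]_\pg,Z\ra$: the $4$-tensor $A(X,Y,Z,W):=\la [[X,Y]_\kg,Z],W\ra$ is antisymmetric in each pair, and projecting the Jacobi identity to $\pg$ shows that its first-Bianchi defect is a cyclic sum of values of $T$, which cancels in the usual octahedron symmetrization that yields the pair symmetry of a Riemannian curvature tensor. (Equivalently, one can read it off from the pair symmetry of the Levi-Civita curvature of $g$, after checking that the purely $[\cdot,\cdot]_\pg$ terms in the naturally reductive curvature formula already possess that symmetry.) This identity --- not a spanning statement for $\Ker b$ --- is the heart of Kostant's and D'Atri--Ziller's argument, and without it your proposal does not prove the theorem.
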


Essentially, what it has to be shown is that the following compelled condition,
$$
Q([X,Y]_\kg,[X',Y']_\kg) = -Q(Y,[X,[X',Y']_\kg]), \qquad \forall X,Y,X',Y\in\pg,
$$
works as a definition of $Q|_{\kg\times\kg}$.  We note that $\pg$ is determined by $Q$ (so by $g$) as the $Q$-orthogonal complement of $\kg$.

\subsection{Irreducibility} 

The following property will be crucial in our study of Ricci locally invertibility among the class of naturally reductive metrics below.  

\begin{definition}\label{pirr}
A reductive complement $\pg$ is called {\it irreducible} if there exists no nontrivial subspace invariant under the space of operators $\ad{\kg}|_\pg+\ad_\pg{\pg}$.  Otherwise, $\pg$ is called {\it reducible}.  
\end{definition}

Let $g$ be a naturally reductive metric on $M$ with respect to $G$ and $\pg$ and set, as usual, $\ip:=g_o\in\sym_+(\pg)^K$.  Note that another metric $g_h=\la h\cdot,h\cdot\ra\in\mca^G$, $h\in\sym_+(\pg)^K$, is naturally reductive with respect to $G$ and $\pg$ if and only if $[h,\ad_\pg{\pg}]=0$.  The following conditions are equivalent:

\begin{enumerate}[{\small $\bullet$}]
\item $\pg$ is irreducible.

\item $g$ is, up to scaling, the unique naturally reductive metric on $M$ with respect to $G$ and $\pg$.

\item $(M,g)$ is holonomy irreducible (see \cite[Theorem 5]{Kst2} and \cite[Theorem 6]{DtrZll}).  Indeed, $\ad{\kg}|_\pg+\ad_\pg{\pg}$ generates the holonomy algebra at $o\in M$ since the natural connection (or canonical connection of the first kind) of the homogeneous space $M=G/K$ is characterized by $\nabla_XY:=\unm[X,Y]_\pg$ for all $X,Y\in\pg$ and coincides with the Levi-Civita connection of any naturally reductive metric on $M$ with respect to $G$ and $\pg$ (see \cite[Theorem 1]{Kst2}).   

\item $(\widetilde{M},g)$ is de Rham irreducible, where $\widetilde{M}$ denotes the simply connected cover of $M$ (see \cite[Corollary 7]{Kst2}).
\end{enumerate}

The following construction of non-pervasive naturally reductive complements is due to Kostant.  Let $\overline{G}$ be a connected Lie group acting transitively on $M$, with isotropy $\overline{K}$.  Suppose that $g\in\mca^{\overline{G}}$ is naturally reductive with respect to $\overline{G}$ and a reductive decomposition $\overline{\ggo}=\overline{\kg}\oplus\pg$ with $\pg$ pervasive.  For each Lie algebra $\ngo$ and monomorphism $\tau:\ngo\rightarrow\overline{\ggo}$ such that 
$$
\tau(\ngo)\subset\lgo:=\left\{ X\in \pg\cap[\overline{\ggo},\overline{\ggo}]:[\overline{\kg},X]=0\right\},
$$
we define $\ggo:=\ngo\oplus\overline{\ggo}$ as a direct sum of ideals.  Using the form $Q$ on $\overline{\ggo}$ provided by Theorem \ref{Q-thm}, it is easy to see that $\lgo$ is a (possibly trivial) Lie subalgebra of $\overline{\ggo}$ and satisfies $[\lgo,\pg]\subset\pg$ (note that $Q(\overline{\kg},[\lgo,\pg])=0$).  Consider the transitive Lie group $G:=N\overline{G}$ of $M$, where $N$ is the connected Lie subgroup of $\overline{G}$ with Lie algebra $\tau(\ngo)$.  The corresponding isotropy subgroup is $K=K_0\overline{K}$, where $K_0=N\cap\overline{K}$ and has Lie algebra $\kg_0:=\{ (X,\tau(X)):X\in\ngo\}\subset\ggo$, so $\kg=\kg_0\oplus\overline{\kg}$.  Thus $\ip$ is $\Ad(K)$-invariant and $g\in\mca^G$ is also naturally reductive with respect to $G$ and the reductive decomposition $\ggo=\kg\oplus\pg=\kg_0\oplus\overline{\kg}\oplus\pg$.  Since $[\pg,\pg]_{\overline{\kg}}=\overline{\kg}$, one has that $[\pg,\pg]_{\kg}=\overline{\kg}$ and hence $\pg$ is not pervasive as a reductive complement of $M=G/K$, unless $\ngo=0$.  

It was shown by Kostant that the converse holds.  

\begin{theorem}\label{perv-const}\cite[Corollary 4]{Kst2}
Any naturally reductive space with a non-pervasive $\pg$ can be constructed in the above way.  
\end{theorem}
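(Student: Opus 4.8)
The plan is to peel the non-pervasive presentation apart into its pervasive core $\overline{\ggo}=\pg+[\pg,\pg]$ and a piece of ``extra'' isotropy, and to read the abstract data $(\ngo,\tau)$ off the latter. First I would form $\overline{\ggo}:=\pg+[\pg,\pg]$ and $\overline{\kg}:=[\pg,\pg]_\kg$. As recalled just before Theorem~\ref{Q-thm}, $\overline{\ggo}$ is an ideal of $\ggo$, the normal subgroup $\overline{G}$ is transitive on $M$, and $g$ is naturally reductive with respect to $\overline{G}$ and the \emph{pervasive} complement $\overline{\ggo}=\overline{\kg}\oplus\pg$; Theorem~\ref{Q-thm} then provides the non-degenerate $\ad{\overline{\ggo}}$-invariant symmetric form $Q$ on $\overline{\ggo}$ with $Q(\overline{\kg},\pg)=0$, $\ip=Q|_{\pg\times\pg}$, and $\pg$ equal to the $Q$-orthogonal complement of $\overline{\kg}$.

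Next I would split the isotropy. Because $M$ carries $G$-invariant metrics, $\Ad(K)$ is compact and $\kg$ is a compact, hence reductive, Lie algebra; a short computation with the Jacobi identity and $[\kg,\pg]\subset\pg$ shows $\overline{\kg}=[\pg,\pg]_\kg$ is an ideal of $\kg$, so it has a complementary ideal $\kg'$, i.e.\ $\kg=\overline{\kg}\oplus\kg'$ with $[\overline{\kg},\kg']=0$. Almost-effectiveness of $G/K$ makes $\ad(\cdot)|_\pg$ faithful on $\kg$, so injective on $\kg'$; this $\kg'$ will be the abstract algebra $\ngo$. For $X\in\kg'$ the relations $[\kg',\overline{\kg}]=0$ and $[\kg',\pg]\subset\pg$ show that $\ad{X}|_{\overline{\ggo}}$ is a derivation of $\overline{\ggo}$ killing $\overline{\kg}$, preserving $\pg$, and $Q$-skew (using $Q(\pg,\overline{\kg})=0$, $\ip=Q|_{\pg\times\pg}$, and skew-symmetry of $\ad{X}|_\pg$ from $\Ad(K)$-invariance of $\ip$); on $\pg$ it is a skew derivation of $\mu_\pg$ commuting with $\ad{\overline{\kg}}|_\pg$. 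The crux of the proof is to show that every such derivation is inner of the special shape $\ad{X}|_{\overline{\ggo}}=\ad{\tau(X)}|_{\overline{\ggo}}$ for a unique $\tau(X)\in\lgo=\{W\in\pg\cap[\overline{\ggo},\overline{\ggo}]:[\overline{\kg},W]=0\}$, so that one gets a monomorphism $\tau:\kg'\to\overline{\ggo}$ (a homomorphism because $[\ad{X}|_{\overline{\ggo}},\ad{X'}|_{\overline{\ggo}}]=\ad{[X,X']}|_{\overline{\ggo}}$) with image in $\lgo$. This is exactly the point where Kostant's structural analysis of naturally reductive spaces enters: one integrates the $Q$-skew derivation against the non-degenerate invariant $Q$, uses pervasiveness of $\pg$ to rule out an outer part surviving on $[\overline{\ggo},\overline{\ggo}]$, and uses the vanishing on $\overline{\kg}$ to force the representative into the centraliser of $\overline{\kg}$ inside $\pg\cap[\overline{\ggo},\overline{\ggo}]$.

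Finally I would run the construction with $\ngo:=\kg'$ and this $\tau$, producing $\ggo'=\ngo\oplus\overline{\ggo}$, the transitive group $G'=N\overline{G}$ with isotropy $K'=K_0\overline{K}$, $\kg_0=\{(X,\tau(X)):X\in\ngo\}$, and reductive complement $\pg$, and then check $(G'/K',g,\pg)\cong(G/K,g,\pg)$. The required Lie algebra isomorphism $\ggo'\to\ggo$ is the identity on $\overline{\ggo}$ and sends $X\in\ngo=\kg'$ to $X-\tau(X)\in\ggo$; since $\ad{(X-\tau(X))}|_{\overline{\ggo}}=\ad{X}|_{\overline{\ggo}}-\ad{\tau(X)}|_{\overline{\ggo}}=0$, i.e.\ $X-\tau(X)$ centralises $\overline{\ggo}$, one verifies directly (using $\kg'\cap\overline{\ggo}=0$) that this is a Lie algebra isomorphism carrying $\kg_0\oplus\overline{\kg}$ onto $\kg$ and fixing $\pg$, hence matching the whole naturally reductive structure.

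I expect the only genuine obstacle to be the crux isolated in the second paragraph: showing that the $Q$-skew derivations $\ad{X}|_{\overline{\ggo}}$ that annihilate $\overline{\kg}$ are inner and representable by elements of $\lgo$. Everything else is bookkeeping with ideals and the Jacobi identity, whereas this step is precisely where non-degeneracy of $Q$ and pervasiveness of $\pg$ --- the substantive content of Kostant's theory --- are used in an essential way.
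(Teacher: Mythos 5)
First, a point of comparison: the paper does not actually prove Theorem~\ref{perv-const} --- it is quoted from Kostant \cite[Corollary 4]{Kst2} and used as a black box (e.g.\ in Corollaries~\ref{nonperv} and \ref{perv-exist}) --- so there is no in-paper argument to measure your proposal against. Your outline does reconstruct the skeleton any such proof must have: pass to the pervasive core $\overline{\ggo}=\overline{\kg}\oplus\pg$ and the form $Q$ of Theorem~\ref{Q-thm}, split $\kg=\overline{\kg}\oplus\kg'$ into commuting ideals (legitimate, since $\kg$ carries an $\ad$-invariant inner product and is therefore reductive, and $\overline{\kg}=[\pg,\pg]_\kg$ is an ideal of $\kg$ by Jacobi), and then try to realize $\ad{X}|_{\overline{\ggo}}$, for $X\in\kg'$, as $\ad{\tau(X)}|_{\overline{\ggo}}$ with $\tau(X)\in\lgo$.

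The problem is that the step you isolate as ``the crux'' is not a deferred verification --- it \emph{is} the theorem --- and your proposal gives no argument for it. Saying that one ``integrates the $Q$-skew derivation against the non-degenerate invariant $Q$'' and that ``pervasiveness rules out an outer part'' is not a proof, and the claim cannot follow formally from the properties you list: a $Q$-skew derivation of $(\overline{\ggo},Q)$ that kills $\overline{\kg}$ and preserves $\pg$ need not be inner. Consider $M=\RR^2$, $G=\E(2)$, $K=\SO(2)$, $\pg=\RR^2$: the flat metric is naturally reductive with non-pervasive $\pg$, here $\overline{\ggo}=\pg$ is abelian, $\lgo=\pg\cap[\overline{\ggo},\overline{\ggo}]=0$, yet for $0\ne X\in\kg'=\kg$ the map $\ad{X}|_{\overline{\ggo}}$ is a nonzero rotation and equals $\ad{W}|_{\overline{\ggo}}$ for no $W\in\overline{\ggo}$. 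So whatever hypotheses or additional structure make Kostant's Corollary~4 go through, they do work that your sketch does not capture, and the argument cannot be completed along the lines indicated without identifying and using them; this is exactly where you would have to consult \cite{Kst2}. Two secondary gaps, fixable but not free: you must show that $\{X-\tau(X):X\in\kg'\}$ is an \emph{ideal} of $\ggo$ complementary to $\overline{\ggo}$, not merely a subspace of the centralizer of $\overline{\ggo}$ --- this requires uniqueness of the $\lgo$-representative together with control of $\zg(\overline{\ggo})$, since $[X-\tau(X),Y-\tau(Y)]$ is a priori only $[X,Y]$ plus an element of $\overline{\ggo}$ centralizing $\overline{\ggo}$ --- and, relatedly, that $\tau$ is a Lie algebra homomorphism.
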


\begin{corollary}\label{nonperv}
If $g$ is a naturally reductive metric on $M$ with respect to $G$ and $\pg$, then $\pg$ is irreducible as a reductive complement for $M=G/K$ if and only if it is so as a reductive complement for $M=\overline{G}/\overline{K}$, where $\overline{\ggo}=\overline{\kg}\oplus\pg$ and $\overline{\kg}=[\pg,\pg]_\kg$.
\end{corollary}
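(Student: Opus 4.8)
The plan is to show that the two ``spaces of operators'' governing Definition \ref{pirr} are in fact literally equal, that is,
\[
\ad{\kg}|_\pg+\ad_\pg{\pg}=\ad{\overline{\kg}}|_\pg+\ad_\pg{\pg},
\]
where on the left $\pg$ is viewed as a reductive complement for $M=G/K$ and on the right as one for $M=\overline{G}/\overline{K}$ (recall $\overline{\kg}=[\pg,\pg]_\kg\subset\kg$; and since $[\pg,\pg]\subset\overline{\ggo}$, which is an ideal of $\ggo$, the operators $\ad_\pg{X}=[X,\cdot]_\pg$, $X\in\pg$, are the same whether computed in $\ggo$ or in $\overline{\ggo}$). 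Granting this identity, the two sets of endomorphisms of $\pg$ trivially admit the same invariant subspaces, and so $\pg$ is irreducible for one presentation exactly when it is for the other.

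First I would dispose of the pervasive case: there $[\pg,\pg]_\kg=\kg$, hence $\overline{\kg}=\kg$ and $\overline{\ggo}=\ggo$, and there is nothing to prove. Assuming now that $\pg$ is not pervasive, Theorem \ref{perv-const} tells us that $(\ggo,\kg,\pg)$ arises from Kostant's construction: $\ggo=\ngo\oplus\overline{\ggo}$ is a direct sum of ideals, $\tau:\ngo\rightarrow\overline{\ggo}$ is a monomorphism with $\tau(\ngo)\subset\lgo=\{X\in\pg\cap[\overline{\ggo},\overline{\ggo}]:[\overline{\kg},X]=0\}\subset\pg$, and $\kg=\kg_0\oplus\overline{\kg}$ with $\kg_0=\{(X,\tau(X)):X\in\ngo\}$. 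The key step is to identify the isotropy action of $\kg_0$ on $\pg$: for $Z=(X,\tau(X))\in\kg_0$ and $Y\in\pg\subset\overline{\ggo}$, the relation $[\ngo,\overline{\ggo}]=0$ (both are ideals whose sum is direct) gives $[Z,Y]=[\tau(X),Y]\in\overline{\ggo}$, and since $\ggo=\kg_0\oplus\overline{\kg}\oplus\pg$ with $\overline{\ggo}=\overline{\kg}\oplus\pg$, the projection $\proy_\pg:\ggo\rightarrow\pg$ agrees on $\overline{\ggo}$ with the projection relative to $\overline{\ggo}=\overline{\kg}\oplus\pg$. Hence $\ad{Z}|_\pg=\ad_\pg{\tau(X)}$ with $\tau(X)\in\lgo\subset\pg$, so $\ad{\kg_0}|_\pg\subseteq\ad_\pg{\pg}$, and therefore $\ad{\kg}|_\pg+\ad_\pg{\pg}=\ad{\kg_0}|_\pg+\ad{\overline{\kg}}|_\pg+\ad_\pg{\pg}=\ad{\overline{\kg}}|_\pg+\ad_\pg{\pg}$, as wanted.

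The only delicate (though elementary) point is the claim that $\proy_\pg$ restricted to $\overline{\ggo}$ is insensitive to whether it is computed relative to $\ggo=\kg\oplus\pg$ or to $\overline{\ggo}=\overline{\kg}\oplus\pg$; this follows at once from $\kg=\kg_0\oplus\overline{\kg}$ together with $\tau(\ngo)\subset\pg$, but it is precisely where Kostant's explicit description of $\kg_0$ enters. As an alternative route avoiding Theorem \ref{perv-const}, one may instead use the holonomy description recorded after Definition \ref{pirr}: the natural connection $\nabla_XY=\unm[X,Y]_\pg$ depends only on $\mu_\pg$ and is therefore the same connection on $M$ in both presentations, so its holonomy algebra at $o$ is generated both by $\ad{\kg}|_\pg+\ad_\pg{\pg}$ and by $\ad{\overline{\kg}}|_\pg+\ad_\pg{\pg}$; since a subspace of $\pg$ is invariant under a set of endomorphisms if and only if it is invariant under the Lie algebra they generate, the two irreducibility conditions coincide.
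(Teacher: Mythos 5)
Your proof is correct and follows essentially the same route as the paper: invoke Theorem \ref{perv-const} to put the non-pervasive case in Kostant's normal form and observe that $\ad{\kg_0}|_\pg\subset\ad_\pg{\pg}$, whence $\ad{\kg}|_\pg+\ad_\pg{\pg}=\ad{\overline{\kg}}|_\pg+\ad_\pg{\pg}$. The extra details you supply (the agreement of the two projections on $\overline{\ggo}$, and the alternative holonomy argument) are sound but not needed beyond what the paper's one-line proof already records.
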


\begin{proof}
It follows from the above construction that $\ad{\kg_0}|_\pg\subset \ad_\pg{\pg}$, therefore, 
$$
\ad{\kg}|_\pg+\ad_\pg{\pg} = \ad{\kg_0}|_\pg+\ad{\overline{\kg}}|_\pg+\ad_\pg{\pg} = \ad{\overline{\kg}}|_\pg+\ad_\pg{\pg},
$$
concluding the proof. 
\end{proof}

\begin{corollary}\label{perv-exist}
Assume that $M=G/K$ admits a naturally reductive metric with respect to $G$.  Then there exists a naturally reductive metric on $M$ with respect to $G$ and a pervasive reductive complement $\pg$. 
\end{corollary}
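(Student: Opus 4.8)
The plan is to reduce to the case of a non-pervasive complement, apply Kostant's structural description of such complements (Theorem \ref{perv-const}) to split off an extra ideal from $\ggo$, and then manufacture a pervasive naturally reductive metric by building an appropriate nondegenerate invariant bilinear form on $\ggo$ and invoking the converse to Theorem \ref{Q-thm}.

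First I would fix a metric $g\in\mca^G$ that is naturally reductive with respect to $G$ and some reductive complement $\pg_0$, given by hypothesis; if $\pg_0$ happens to be pervasive there is nothing left to prove. Otherwise Theorem \ref{perv-const} gives a decomposition $\ggo=\ngo\oplus\overline{\ggo}$ into ideals together with a monomorphism $\tau:\ngo\to\lgo\subset\pg_0\cap[\overline{\ggo},\overline{\ggo}]$ satisfying $[\overline{\kg},\tau(\ngo)]=0$, where $\kg=\kg_0\oplus\overline{\kg}$ with $\kg_0=\{(X,\tau(X)):X\in\ngo\}$ and $\overline{\kg}=[\pg_0,\pg_0]_\kg$, and such that $g$, regarded as a metric on $M=\overline{G}/\overline{K}$, is naturally reductive with respect to $\overline{G}$ and the pervasive complement $\overline{\ggo}=\overline{\kg}\oplus\pg_0$. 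Applying Theorem \ref{Q-thm} to this last situation produces a nondegenerate $\ad{\overline{\ggo}}$-invariant symmetric bilinear form $Q$ on $\overline{\ggo}$ with $Q(\overline{\kg},\pg_0)=0$ and $Q|_{\pg_0\times\pg_0}=\ip:=g_o$ positive definite.

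The key observation is that $\tau(\ngo)\subset\pg_0$, so $Q|_{\tau(\ngo)\times\tau(\ngo)}$ is positive definite and $\ad{\tau(\ngo)}$-invariant; pulling it back along $\tau$ yields a positive definite $\ad{\ngo}$-invariant form $Q_\ngo$ on $\ngo$. I would then equip $\ggo=\ngo\oplus\overline{\ggo}$ with the form $Q'$ defined by $Q'|_{\ngo\times\ngo}:=Q_\ngo$, $Q'|_{\overline{\ggo}\times\overline{\ggo}}:=Q$ and $Q'(\ngo,\overline{\ggo}):=0$. Since $\ngo$ and $\overline{\ggo}$ are ideals, $Q'$ is $\ad{\ggo}$-invariant and nondegenerate, and it is $\Ad(K)$-invariant (the only slightly delicate point being possibly disconnected $K$, which is routine). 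One checks that $Q'|_{\kg_0\times\kg_0}=2\,Q(\tau\cdot,\tau\cdot)$ is positive definite, $Q'|_{\overline{\kg}\times\overline{\kg}}=Q|_{\overline{\kg}\times\overline{\kg}}$ is nondegenerate, and $Q'(\kg_0,\overline{\kg})=0$ because $Q(\pg_0,\overline{\kg})=0$; hence $Q'|_{\kg\times\kg}$ is nondegenerate, so $\pg:=\kg^{\perp_{Q'}}$ gives a reductive decomposition $\ggo=\kg\oplus\pg$ with $Q'(\kg,\pg)=0$ and $\pg^{\perp_{Q'}}=\kg$.

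It then remains to verify that $Q'|_{\pg\times\pg}$ is positive definite and that $\pg$ is pervasive. A direct computation identifies $\pg=\{(X,-\tau(X)):X\in\ngo\}\oplus\pg_\perp$, where $\pg_\perp$ is the $\ip$-orthogonal complement of $\tau(\ngo)$ inside $\pg_0$; on the first summand $Q'$ restricts to $2\,Q(\tau\cdot,\tau\cdot)>0$, on $\pg_\perp$ it restricts to $\ip|_{\pg_\perp}>0$, and the two summands are $Q'$-orthogonal, so $Q'|_{\pg\times\pg}$ is positive definite and defines a $G$-invariant metric $g'$ on $M$. Since $Q'$ is nondegenerate, $\ad{\ggo}$-invariant, $Q'(\kg,\pg)=0$ and $Q'|_{\pg\times\pg}=g'_o$, the metric $g'$ is naturally reductive with respect to $G$ and $\pg$; and the $Q'$-orthogonal complement of the ideal $\pg+[\pg,\pg]$ lies in $\pg^{\perp_{Q'}}=\kg$, hence is an ideal contained in $\kg$ and so vanishes by almost-effectiveness, which by nondegeneracy of $Q'$ forces $\pg+[\pg,\pg]=\ggo$, i.e.\ $\pg$ is pervasive. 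The step I expect to be the main obstacle is this last one: pinning down $\pg=\kg^{\perp_{Q'}}$ explicitly as $\{(X,-\tau(X))\}\oplus\pg_\perp$, which is precisely what makes the positivity of $Q'|_{\pg\times\pg}$ (and hence the fact that $g'$ is a genuine Riemannian metric rather than a pseudo-Riemannian one) transparent; the nondegeneracy of $Q'|_{\kg\times\kg}$ and the $\Ad(K)$-invariance of $Q'$ are the remaining points calling for (routine) checks.
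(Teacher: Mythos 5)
Your proof is correct, and it rests on the same two pillars as the paper's argument: Kostant's structure theorem for non-pervasive complements (Theorem \ref{perv-const}) and the correspondence between naturally reductive metrics with pervasive complement and invariant bilinear forms (Theorem \ref{Q-thm} together with the discussion preceding it). The execution, however, is genuinely different. The paper takes the complement $\pg':=\ngo\oplus\pg_1$ (your $\pg_\perp$ is its $\pg_1$), equips it directly with $\la\tau\cdot,\tau\cdot\ra$ on $\ngo$ plus $\ip$ on $\pg_1$, verifies pervasiveness by hand from $\tau(\ngo)\subset[\overline{\ggo},\overline{\ggo}]\cap\pg_0$, and leaves natural reductivity of this metric as an ``easy check.'' You instead first assemble the global nondegenerate $\ad{\ggo}$-invariant form $Q'$ on $\ggo=\ngo\oplus\overline{\ggo}$ and define $\pg:=\kg^{\perp_{Q'}}$, which comes out as the anti-diagonal $\{(X,-\tau(X)):X\in\ngo\}\oplus\pg_\perp$ --- a different subspace from the paper's $\ngo\oplus\pg_1$, though both are legitimate reductive complements. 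What your route buys is that natural reductivity and pervasiveness are then automatic from the general principle the paper records just before Theorem \ref{Q-thm} (an invariant nondegenerate $Q'$ with $Q'(\kg,\pg)=0$ and $Q'|_{\pg\times\pg}>0$ forces both, pervasiveness via almost-effectiveness exactly as you argue), so the only substantive work left is the linear algebra identifying $\kg^{\perp_{Q'}}$ and checking positivity there, which you carry out correctly. The one point both proofs gloss over in the same way is the $\Ad(K)$-invariance of the new inner product when $K$ is disconnected; you at least flag it explicitly, whereas the paper's proof asserts $g'\in\mca^G$ without comment.
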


\begin{proof}
In terms of the above construction, we consider $\pg':=\ngo\oplus\pg_1$, where $\pg=\tau(\ngo)\oplus\pg_1$ is the orthogonal decomposition with respect to $g\in\mca^G$, $g_o=\ip\in\sym_+(\pg)^K$, a naturally reductive metric with respect to $G$ and $\pg$.  It  follows that $\ggo=\kg\oplus\pg'$ and $[\kg,\pg']\subset\pg'$, that is, $\pg'$ is a reductive complement of $G/K$.  Moreover, $\pg'$ is pervasive.  Indeed, using that $\tau(\ngo)\subset  [\tau(\ngo),\tau(\ngo)]+[\pg_1,\pg_1]\cap\pg$ (recall that $\tau(\ngo)\subset [\overline{\ggo},\overline{\ggo}]\cap\pg$), one obtains that the projection of $[\pg',\pg']$ on $\kg$ relative to $\ggo=\kg\oplus\pg'$ is the whole $\kg$.  On the other hand, it is easy to check that the metric $g'\in\mca^G$, $g'_o=\ip'\in\sym_+(\pg')^K$, where 
$$
\ip':=\la\tau\cdot,\tau\cdot\ra|_{\tau(\ngo)}+\ip|_{\pg_1\times\pg_1},
$$ 
is naturally reductive with respect to $G$ and $\pg'$, concluding the proof.   
\end{proof}

\begin{example}\label{perv-exa}
Consider $G=\SO(3)\times\SO(5)$ and on its Lie algebra $\sog(3)\oplus\sog(5)$, the basis $\{ X_{ij}\}$ of $\sog(5)$ as in Example \ref{so5-1} and a basis $\{ Y_1,Y_2,Y_3\}$ of $\sog(3)$ with identical Lie brackets as $\{ X_{34}, X_{35}, X_{45}\}$.   We also consider the homogeneous space $M=G/K$ for the connected Lie subgroup $K$ of $G$ with Lie algebra $\kg=\kg_0\oplus\RR X_{12}$, where $\kg_0:=\spann\{ Y_1+X_{34}, Y_2+X_{35}, Y_3+X_{45}\}$.   The reductive complement $\pg:= \spann\{ X_{ij}: ij\ne 12\}$ is not pervasive;  indeed, $\overline{\ggo}=\overline{\kg}\oplus\pg$, where $\overline{\kg}:=\RR X_{12}$.  The corresponding decompositions of $\pg$ as $\Ad(K)$ and $\Ad(\overline{K})$ irreducible representations are respectively given by 
$$
\pg=\RR^6\oplus\RR^3, \qquad \pg=(\RR)^3\oplus(\RR^2)^3,
$$
from which follows that $\dim{\mca^G}=2$ and $\dim{\mca^{\overline{G}}}=12$.  It is easy to check that $\pg$ is irreducible in both cases, in accordance with Corollary \ref{nonperv}.  Note that $-\kil_{\sog(5)}|_{\pg\times\pg}$ defines a (standard) naturally reductive metric on both $M=G/K$ and $M=\overline{G}/\overline{K}$.  

On the other hand, it is easy to check that
$$
\pg':=\spann\{ Y_1,Y_2,Y_3, X_{13}, X_{14}, X_{15}, X_{23}, X_{24}, X_{25}\},
$$
is a pervasive naturally reductive complement of $M=G/K$ (cf. Corollary \ref{perv-exist}).  
\end{example}

We now show that the irreducibility of a pervasive naturally reductive complement only depends on $M=G/K$.  

\begin{definition}\label{dec-def}
A Lie algebra is called {\it decomposable} if it is the direct sum of two of its ideals; otherwise, it is called {\it indecomposable}.  A Lie subalgebra $\kg$ of a Lie algebra $\ggo$ is said to be $\ggo$-{\it decomposable} when $\kg=\kg\cap\ggo_1\oplus\kg\cap\ggo_2$ for some nonzero ideals $\ggo_1$ and $\ggo_2$ of $\ggo$ such that $\ggo=\ggo_1\oplus\ggo_2$.  Otherwise, it is called $\ggo$-{\it indecomposable}.       
\end{definition}

We note that any subalgebra of an indecomposable $\ggo$ is automatically $\ggo$-indecomposable, and that an indecomposable subalgebra of some $\ggo$ may be $\ggo$-decomposable since $\kg\cap\ggo_i$ is allowed to vanish.    

\begin{proposition}\label{pirred-prop2}
Let $g$ be a naturally reductive metric on $M$ with respect to $G$ and the reductive decomposition $\ggo=\kg\oplus\pg$ and assume that $\pg$ is pervasive.  Then $\pg$ is irreducible if and only if $\kg$ is $\ggo$-{\it indecomposable}.   
\end{proposition}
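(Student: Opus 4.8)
The plan is to prove the two implications separately, using the non-degenerate $\ad{\ggo}$-invariant symmetric bilinear form $Q$ on $\ggo$ provided by Theorem \ref{Q-thm} --- so $Q(\kg,\pg)=0$, $\ip=Q|_{\pg\times\pg}$ and $\pg=\kg^{\perp_Q}$ --- together with the standing almost-effectiveness of $G/K$ and the observation that every operator in $\mathfrak{h}:=\ad{\kg}|_\pg+\ad_\pg{\pg}$ is skew-symmetric with respect to $\ip$: $\ad{k}|_\pg$ because $\ip$ is $\Ad(K)$-invariant, and $\ad_\pg{X}$ because $g$ is naturally reductive with respect to $G$ and $\pg$. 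In particular $\mathfrak{h}\subseteq\sog(\pg,\ip)$, so every $\mathfrak{h}$-invariant subspace of $\pg$ admits an $\ip$-orthogonal $\mathfrak{h}$-invariant complement.

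Assume first that $\pg$ is reducible, and write $\pg=\pg_1\oplus\pg_2$ as an $\ip$-orthogonal direct sum of nonzero $\mathfrak{h}$-invariant subspaces. The crucial step is $[\pg_1,\pg_2]=0$: for $v_i\in\pg_i$ one has $[v_1,v_2]_\pg=\ad_\pg{v_1}(v_2)\in\pg_2$ and $[v_1,v_2]_\pg=-\ad_\pg{v_2}(v_1)\in\pg_1$, so $[v_1,v_2]_\pg\in\pg_1\cap\pg_2=0$ and $[\pg_1,\pg_2]\subseteq\kg$; then $Q([\pg_1,\pg_2],\pg)=0$ since $[\pg_1,\pg_2]\subseteq\kg$, while for $k\in\kg$ invariance of $Q$ gives $Q([v_1,v_2],k)=Q(v_1,[v_2,k])=-Q(v_1,\ad{k}(v_2))=0$ because $\ad{k}(v_2)\in\pg_2\perp_Q\pg_1$, so $[\pg_1,\pg_2]\perp_Q\ggo$ and hence vanishes by non-degeneracy. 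Now set $\kg_i:=[\pg_i,\pg_i]_\kg$ and $\ggo_i:=\kg_i\oplus\pg_i$. A sequence of routine checks with the Jacobi identity and the $\mathfrak{h}$-invariance of the $\pg_i$ then gives: $[\kg_1,\pg_2]=[\kg_2,\pg_1]=[\kg_1,\kg_2]=0$; each $\kg_i$ is an ideal of $\kg$ with $\kg\cap\ggo_i=\kg_i$ (a would-be $\pg$-component would lie in $\kg\cap\pg=0$); each $\ggo_i$ is an ideal of $\ggo$; $\ggo=\kg\oplus\pg=[\pg,\pg]_\kg\oplus\pg=(\kg_1+\kg_2)\oplus(\pg_1\oplus\pg_2)=\ggo_1+\ggo_2$ by pervasiveness; and $\ggo_1\cap\ggo_2$ is an ideal which is central (by $[\ggo_1,\ggo_2]=0$) and contained in $\kg$ (comparing $\kg$- and $\pg$-components and using $\pg_1\cap\pg_2=0$), hence is $0$ by almost-effectiveness. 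Thus $\ggo=\ggo_1\oplus\ggo_2$ with $\kg=\kg_1\oplus\kg_2$, $\kg_i=\kg\cap\ggo_i$, and both $\ggo_i\neq0$ since $\pg_i\neq0$; that is, $\kg$ is $\ggo$-decomposable.

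Conversely, let $\ggo=\ggo_1\oplus\ggo_2$ be a direct sum of nonzero ideals with $\kg=\kg_1\oplus\kg_2$, $\kg_i:=\kg\cap\ggo_i$. The step requiring the most care is $Q(\ggo_1,\ggo_2)=0$: since $[\ggo_1,\ggo_2]\subseteq\ggo_1\cap\ggo_2=0$, invariance of $Q$ forces it to vanish on $[\ggo_1,\ggo_1]\times\ggo_2$ and on $\ggo_1\times[\ggo_2,\ggo_2]$, so the only possible cross term pairs the abelianizations of the $\ggo_i$, and one rules this out using $Q(\kg,\pg)=0$ and almost-effectiveness (when $G$ is compact it is immediate from $Q>0$). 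Granting $\ggo_1\perp_Q\ggo_2$, the restriction $Q|_{\ggo_i}$ is non-degenerate and, since $\pg=\kg^{\perp_Q}$, one gets $\ggo_i=\kg_i\oplus(\pg\cap\ggo_i)$ and hence $\pg=(\pg\cap\ggo_1)\oplus(\pg\cap\ggo_2)=:\pg_1\oplus\pg_2$, where each $\pg_i\neq0$ (otherwise $\ggo_i\subseteq\kg$, against almost-effectiveness). Finally each $\pg_i$ is $\mathfrak{h}$-invariant: $\ad{\kg}$ preserves $\pg$ and the ideal $\ggo_i$, hence $\pg\cap\ggo_i$; and for $w\in\pg$, $v\in\pg\cap\ggo_i$ one has $\ad_\pg{w}(v)=\proy_\pg([w,v])\in\proy_\pg(\ggo_i)=\pg\cap\ggo_i$ because $[w,v]\in\ggo_i$. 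Therefore $\pg$ is reducible. The only non-computational point in the whole argument is the $Q$-orthogonality reduction just described; everything else is bookkeeping with the reductive decomposition and the invariant form $Q$.
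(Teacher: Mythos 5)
Your first implication ($\pg$ reducible $\Rightarrow$ $\kg$ is $\ggo$-decomposable) is correct and follows the same route as the paper's proof, with the details the paper leaves as ``easy to see'' properly filled in; in particular, proving that $[\pg_1,\pg_2]$ vanishes in all of $\ggo$ (not merely its $\pg$-component) via non-degeneracy of $Q$ is exactly the right point to make explicit. The converse, however, has a genuine gap: your pivotal claim that $Q(\ggo_1,\ggo_2)=0$ for the \emph{given} decomposition is false in general, and the vague justification you offer (``$Q(\kg,\pg)=0$ and almost-effectiveness'') cannot repair it. Concretely, let $\ggo=\sug(2)\oplus\sug(2)\oplus\RR^2$ with $Q$ equal to minus the Killing form on each simple factor and the standard inner product on $\RR^2$, and let $\kg=\RR(v+e_1)$ for a $Q$-unit vector $v$ in the first $\sug(2)$. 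With $\pg:=\kg^{\perp_Q}$ one checks that $Q|_{\pg\times\pg}>0$, that $\pg$ is pervasive (since $\RR v\subset[\pg,\pg]$ and the $\kg$-component of $v$ is $\unm(v+e_1)$), and that $G/K$ is almost effective. The nonzero ideals $\ggo_1:=\sug(2)_1\oplus\RR e_1$ and $\ggo_2:=\sug(2)_2\oplus\RR(e_1+e_2)$ satisfy $\ggo=\ggo_1\oplus\ggo_2$ and $\kg=\kg\cap\ggo_1\oplus\kg\cap\ggo_2$, yet $Q(e_1,e_1+e_2)=1\ne0$. Moreover the consequence you draw from orthogonality also fails here: $\pg\cap\ggo_1$ and $\pg\cap\ggo_2$ are each $3$-dimensional while $\dim\pg=7$, so $\pg\ne(\pg\cap\ggo_1)\oplus(\pg\cap\ggo_2)$ and your candidate invariant subspaces do not decompose $\pg$. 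The parenthetical ``when $G$ is compact it is immediate from $Q>0$'' is also off on both counts: $Q$ is positive definite only for normal metrics, and even $Q>0$ does not make two complementary ideals orthogonal (take two non-orthogonal lines in an abelian $\RR^2$).

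What your argument actually needs, and what is missing, is the reduction to a $Q$-orthogonal decomposition: if some splitting $\ggo=\ggo_1\oplus\ggo_2$ with $\kg=\kg\cap\ggo_1\oplus\kg\cap\ggo_2$ exists, then one with $Q(\ggo_1,\ggo_2)=0$ exists (in the example, replace $\RR(e_1+e_2)$ by $\RR e_2$). This is not automatic --- for instance, replacing $\ggo_2$ by $\ggo_1^{\perp_Q}$ requires knowing that $Q|_{\ggo_1}$ is non-degenerate and that $\kg\cap\ggo_2\subset\ggo_1^{\perp_Q}$ --- and it is precisely the content hiding behind the paper's one-line hint that $\pg=\kg^{\perp_Q}$. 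Once a $Q$-orthogonal decomposition is in hand, the remainder of your paragraph (non-degeneracy of $Q|_{\ggo_i}$, the splittings $\ggo_i=\kg_i\oplus(\pg\cap\ggo_i)$, nontriviality of $\pg\cap\ggo_i$ via almost-effectiveness, and invariance of each $\pg\cap\ggo_i$ under $\ad{\kg}|_\pg+\ad_\pg{\pg}$) is correct.
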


\begin{proof}
Let $Q$ denote the symmetric form attached to $g\in\mca^{G}$ by Theorem \ref{Q-thm}.  If $\pg$ is reducible, then there exists a $\ip$-orthogonal $\ad{\kg}|_\pg$-invariant decomposition $\pg=\pg_1\oplus\pg_2$, $\pg_i\ne 0$, such that $[\pg_i,\pg_i]_\pg\subset\pg_i$ and $[\pg_1,\pg_2]_\pg=0$.  It is easy to see that $[\pg_1,\pg_1]_{\kg}$ and $[\pg_2,\pg_2]_{\kg}$ are $Q$-orthogonal and hence $\ggo=\ggo_1\oplus\ggo_2$, where $\ggo_i:=[\pg_i,\pg_i]_{\kg}\oplus\pg_i$ are both ideals of $\ggo$.  Thus $\kg=[\pg_1,\pg_1]_{\kg}\oplus[\pg_2,\pg_2]_{\kg} = \kg\cap\ggo_1\oplus\kg\cap\ggo_2$, that is, $\kg$ is $\ggo$-decomposable.  

Conversely, if $\kg = \kg\cap\ggo_1\oplus\kg\cap\ggo_2$, where $\ggo=\ggo_1\oplus\ggo_2$ and $\ggo_i$ ideal of $\ggo$, then it is easy to see that $\pg$ is reducible by using that $\pg$ is the $Q$-orthogonal complement of $\kg$ in $\ggo$.  
\end{proof}

The following is a consequence of Corollary \ref{perv-exist} and the above proposition.  

\begin{corollary}\label{comp-irred}
If $M=G/K$ admits a naturally reductive metric with respect to $G$ (e.g., if $G$ is compact) and $\kg$ is $\ggo$-indecomposable, then $M$ admits a naturally reductive metric with respect to $G$ and a pervasive and irreducible $\pg$.  
\end{corollary}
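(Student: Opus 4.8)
The plan is to obtain the statement as the straightforward composition of the two preceding results, Corollary \ref{perv-exist} and Proposition \ref{pirred-prop2}. First I would invoke Corollary \ref{perv-exist}: its hypothesis is exactly that $M=G/K$ admits a naturally reductive metric with respect to $G$, so it produces a naturally reductive metric $g'\in\mca^G$ together with a \emph{pervasive} reductive complement $\pg'$, i.e.\ a reductive decomposition $\ggo=\kg\oplus\pg'$ with $[\pg',\pg']_\kg=\kg$ and $\la[X,Y]_{\pg'},Y\ra=0$ for all $X,Y\in\pg'$, where $\ip':=g'_o$. In the parenthetical compact case one could alternatively take $g'$ to be any normal metric, arising from a bi-invariant metric on $G$, whose reductive complement is automatically pervasive by almost-effectiveness; but this is subsumed by Corollary \ref{perv-exist}, so I would just recall that a compact $G$ carries a bi-invariant metric in order to verify the hypothesis in that case.

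Next, with this pervasive $\pg'$ in hand, I would apply Proposition \ref{pirred-prop2} to the data $(\ggo=\kg\oplus\pg',\,g')$: since $\pg'$ is pervasive and $g'$ is naturally reductive with respect to $G$ and $\pg'$, the proposition gives that $\pg'$ is irreducible as a reductive complement if and only if $\kg$ is $\ggo$-indecomposable. The hypothesis of the corollary is precisely that $\kg$ is $\ggo$-indecomposable, so we conclude that $\pg'$ is irreducible. Hence $g'$ is a naturally reductive metric on $M$ with respect to $G$ and the pervasive and irreducible reductive complement $\pg'$, which is exactly the assertion.

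There is essentially no obstacle here, since all the real content has already been established in Corollary \ref{perv-exist} and Proposition \ref{pirred-prop2}. The only point deserving a moment of care is that the output of Corollary \ref{perv-exist} is a legitimate input for Proposition \ref{pirred-prop2}: the complement $\pg'$ it produces is pervasive for the \emph{same} presentation $M=G/K$, and the standing assumptions of the proposition are only that the complement be pervasive and the metric naturally reductive with respect to $G$ and that complement, both of which hold. Thus the proof is a two-line deduction once the earlier results are granted.
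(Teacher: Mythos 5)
Your proposal is correct and is precisely the argument the paper intends: the authors state the corollary as an immediate consequence of Corollary \ref{perv-exist} and Proposition \ref{pirred-prop2}, exactly the two results you compose. Your extra remark checking that the pervasive complement produced by Corollary \ref{perv-exist} is a valid input for Proposition \ref{pirred-prop2} is a sensible (if routine) verification that the paper leaves implicit.
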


Note that any $M=G/K$ with $G$ compact does admit a naturally reductive metric with respect to $G$, namely any normal metric.

\subsection{Examples}\label{exa-nr}
There are three main classes of naturally reductive spaces, which we next describe.  We refer to \cite{Str} and the references therein for classification results in low dimensions.  

\begin{example}\label{DZ1}
Consider the D'Atri-Ziller metrics on a compact Lie group $M=H$ as described in Example \ref{DZ-PRP}.  It was proved in \cite{DtrZll} that every $g\in\mca^G$ is naturally reductive with respect to $G=H\times K$ and some pervasive reductive decomposition $\ggo=\Delta\kg\oplus\pg_g$, and that in the case when $H$ is simple, these metrics actually exhaust the set of all naturally reductive metrics with respect to some transitive Lie group $G$ on $H$.  It is known that  $\pg_g$ is irreducible for any (or some) $g\in\mca^G$ if and only if $\kg$ is $\hg$-indecomposable (see \cite[Theorem 6]{DtrZll}).  

If $\hg=\lgo\oplus\widetilde{\kg}\oplus\ag$, $\kg=\lgo\oplus\widetilde{\kg}$, where $\lgo$ is an ideal of $\kg$, and $\widetilde{\kg}=\kg_1\oplus \dots \oplus\kg_r$ is any $\ad{\kg}$-invariant decomposition, then for any $\beta,\alpha_i>0$, the $H$-invariant metric
$$
\ip=\alpha_1\ip|_{\kg_1\times\kg_1} + \dots + \alpha_r\ip|_{\kg_r\times\kg_r}+\beta\ip|_{\ag\times\ag}\in\sym_+(\pg)^L, \qquad \pg:=\widetilde{\kg}\oplus\ag,
$$
on the homogeneous space $M=H/L$ is also naturally reductive with respect to $G=H\times K$ and some reductive complement $\pg_g$ (see \cite[Chapter 7]{DtrZll} and \cite{Grd}).
\end{example}

There is a non-compact analogous of the above example. 

\begin{example}\label{DZ2}
On a non-compact semisimple Lie group $M=H$, consider the transitive Lie group $G=H\times K$, where $K\subset H$ is the maximal compact subgroup of $H$.   Thus $M=G/\Delta K$ and $\mca^G$ is identified with the space of all left-invariant metrics on $H$ which are also $K$-invariant.  It also holds in this case that any $g\in\mca^G$ is naturally reductive with respect to $G$ and some pervasive reductive decomposition $\ggo=\Delta\kg\oplus\pg_g$ (see \cite{DtrZll, Grd}).  It is easy to see that $\pg_g$ is irreducible if and only if $H$ is simple.  The version on the homogeneous space $M=H/L$ works as in the compact case above.  
\end{example}

In a quite different context, naturally reductive metrics also show up on nilpotent Lie groups.  

\begin{example}\label{nr-2step}
Let $\tau:K\longrightarrow\End(V)$ be a finite-dimensional representation of a compact Lie group $K$ such that the corresponding representation $\tau:\kg\longrightarrow\End(V)$ is faithful and does not have trivial subrepresentations.  By fixing any $\ad{\kg}$-invariant inner product $\ip_\kg$ on $\kg$ and any $\tau(\kg)$-invariant inner product $\ip_V$ on $V$, one can we define the following $2$-step nilpotent Lie bracket $\lb_\ngo$ on $\ngo:=\kg\oplus V$: $[\kg,\ngo]_\ngo=0$ and
$$
\la [X,Y]_\ngo,Z\ra_\kg := \la\tau(Z)X,Y\ra_V, \qquad \forall X,Y\in V, \quad Z\in\kg.
$$
These algebras were first introduced in \cite{EbrHbr}.  It is easy to see that the isomorphism class of $\ngo$ is independent from the invariant inner-products chosen (see \cite{manus}).  It was shown in \cite{Grd} that the left-invariant metric $\ip_\ngo=\ip_\kg+\ip_V$ on the corresponding simply connected Lie group $N$ is naturally reductive with respect to the Lie group $G:=K\ltimes N$ with Lie algebra $\ggo=\kg\ltimes\ngo$ and the reductive decomposition $\ggo=\kg\oplus\pg$, where $\pg:=\{(Z,Z+X)\in\kg\oplus\ngo:Z\in\kg,\, X\in V\}$.  It is easy to see that if  
$\ngo$ is indecomposable, then $\pg$ is irreducible.  
\end{example}

In \cite[Theorem (6.1)]{Grd}, Gordon obtained structure results which completely describe naturally reductive spaces as kind of "amalgamated products" of the above three types.

\subsection{Ricci curvature}\label{ric-nr}
Let $g\in\mca^G$ be a naturally reductive on $M$ with respect to $G$ and $\ggo=\kg\oplus\pg$, and set $\ip:=g_o$.  Recall from the moving-bracket approach described in Section \ref{mba-sec} the formula for the Ricci operator of $g$ given by $\Ricci_\mu=\Mm_{\mu_p}-\unm\kil_\mu$, where $\mu$ is the Lie bracket of $\ggo$ (see \eqref{Ric}).  It follows from \eqref{mm2} that in the naturally reductive case the moment map takes the following simpler form,
\begin{equation}\label{mm-nr}
\Mm_{\mu_\pg}=\unc\sum (\ad_\pg{X_i})^2, \qquad\mbox{so}\qquad \la\Mm_{\mu_\pg}X,X\ra=-\unc|\ad_\pg{X}|^2,\quad\forall X\in\pg.
\end{equation}

\begin{remark}
In particular, for $G$ compact semisimple, $K$ trivial and $\ip=-\kil$, $\Mm_{\mu}$ is, up to scaling, the Casimir operator acting on the adjoint representation of $\ggo$.
\end{remark}

We consider the operator $\cas_\pg:\sym(\pg)\longrightarrow\sym(\pg)$ defined by
\begin{equation}\label{Cp-def}
\cas_\pg(A):=-\sum[\ad_\pg{X_i},[\ad_\pg{X_i},A]],
\end{equation}
where $\{ X_i\}$ is any orthonormal basis of $(\pg,\ip)$.  Note that $\cas_\pg\geq 0$ and $\cas_\pg(A)=0$ if and only if $[A,\ad_\pg{\pg}]=0$.

\begin{remark}
For $G$ compact semisimple, $K$ trivial and $\ip=-\kil$, $\cas_\ggo$ is precisely the Casimir operator acting on the representation $\sym(\ggo)$ of $\ggo$ given by $\tau(X)A:=[\ad{X},A]$, i.e., $\cas_\ggo=-\sum\tau(X_i)^2$ (cf. Section \ref{simple-sec}).
\end{remark}

The following equivalence follows from the fact that
$$
\Ker\cas_\pg|_{\sym(\pg)^K}=\{ A\in\sym(\pg):[A,\ad{\kg}|_\pg]=0 \;\mbox{and}\;  [A,\ad_\pg{\pg}]=0\};
$$
recall Definition \ref{pirr}.

\begin{lemma}\label{irred2}
$\pg$ is irreducible if and only if $\Ker\cas_\pg|_{\sym(\pg)^K}=\RR I$.
\end{lemma}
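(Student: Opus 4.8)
The plan is to unwind the definitions on both sides of the claimed equivalence and reduce everything to the characterization of irreducibility of $\pg$ given right after Definition \ref{pirr}, namely that $\pg$ is irreducible precisely when no nontrivial subspace of $\pg$ is invariant under all operators in $\ad{\kg}|_\pg+\ad_\pg{\pg}$. First I would record the stated formula for the kernel,
\[
\Ker\cas_\pg|_{\sym(\pg)^K}=\{ A\in\sym(\pg):[A,\ad{\kg}|_\pg]=0 \;\mbox{and}\;  [A,\ad_\pg{\pg}]=0\},
\]
and give its (short) justification: since $\cas_\pg\geq 0$, one has $\cas_\pg(A)=0$ if and only if $\la\cas_\pg(A),A\ra=0$, and
\[
\la\cas_\pg(A),A\ra=\sum_i\la[\ad_\pg{X_i},A],[\ad_\pg{X_i},A]\ra=\sum_i|[\ad_\pg{X_i},A]|^2
\]
using that each $\ad_\pg{X_i}$ is skew-symmetric (natural reductivity) so that $[\ad_\pg{X_i},\cdot]$ is self-adjoint on $\glg(\pg)$; hence $\cas_\pg(A)=0$ iff $[A,\ad_\pg{X_i}]=0$ for all $i$, i.e.\ $[A,\ad_\pg{\pg}]=0$. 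Restricting to $\sym(\pg)^K$ adds exactly the condition $[A,\ad{\kg}|_\pg]=0$ (this is what $K$-invariance of $A$ means infinitesimally, and $K$ connected or passing to $\kg$ suffices since $A$ is already assumed in $\sym(\pg)^K$), which gives the displayed description of the kernel.

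Next I would prove the two implications. Suppose $\pg$ is reducible, say $\pg=\pg_1\oplus\pg_2$ with $\pg_i\neq 0$ and both summands invariant under every operator in $\ad{\kg}|_\pg+\ad_\pg{\pg}$; one may take the decomposition $\ip$-orthogonal because the set of operators $\ad{\kg}|_\pg+\ad_\pg{\pg}$ consists of skew-symmetric maps, so the orthogonal complement of an invariant subspace is again invariant. Then the operator $A:=\proy_{\pg_1}-\proy_{\pg_2}$ (equivalently $A|_{\pg_1}=I$, $A|_{\pg_2}=-I$) lies in $\sym(\pg)$, commutes with everything preserving the splitting, hence with $\ad{\kg}|_\pg$ and with $\ad_\pg{\pg}$, so $A\in\Ker\cas_\pg|_{\sym(\pg)^K}$; since $A\notin\RR I$, the kernel is strictly larger than $\RR I$. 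Conversely, suppose $\Ker\cas_\pg|_{\sym(\pg)^K}\supsetneq\RR I$, and pick $A$ in that kernel with $A\notin\RR I$. Since $A$ is self-adjoint it has at least two distinct eigenvalues; let $\pg_1$ be one eigenspace and $\pg_2$ its $\ip$-orthogonal complement (the sum of the other eigenspaces), both nontrivial. Because $A$ commutes with $\ad{\kg}|_\pg+\ad_\pg{\pg}$, each eigenspace of $A$ — in particular $\pg_1$ — is invariant under all these operators, so $\pg$ is reducible. Assembling the two directions gives the equivalence, and I would close by invoking the bulleted equivalences after Definition \ref{pirr} if one wants to phrase it in terms of holonomy or de Rham irreducibility, though that is not needed for the statement as written.

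The only mildly delicate point — the "main obstacle", such as it is — is making sure the reduction from "$A$ commutes with $\ad{\kg}|_\pg$" to "$A$ is $\Ad(K)$-invariant" is handled correctly, i.e.\ that $\sym(\pg)^K$ is genuinely the right space and that no subtlety with $K$ disconnected spoils the argument; but since we are already working inside $\sym(\pg)^K$ and only ever need the Lie-algebra level statement $[A,\ad{\kg}|_\pg]=0$, which is automatic for $A\in\sym(\pg)^K$, this causes no trouble. Everything else is the routine unwinding above, using skew-symmetry of $\ad_\pg{X}$ (natural reductivity) at each step where an adjoint is taken.
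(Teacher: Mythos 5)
Your argument is correct and is exactly the proof the paper intends: the authors reduce the lemma to the displayed description of $\Ker\cas_\pg|_{\sym(\pg)^K}$ as the symmetric commutant of $\ad{\kg}|_\pg+\ad_\pg{\pg}$ together with the Schur-type projection/eigenspace argument you spell out, so you have simply filled in the details left implicit after Definition \ref{pirr}. One small correction: for skew-symmetric $B=\ad_\pg{X_i}$ the operator $[B,\cdot]$ is skew-adjoint on $\glg(\pg)$ (its adjoint is $[B^t,\cdot]=-[B,\cdot]$), not self-adjoint --- it is precisely this sign that turns the minus in the definition of $\cas_\pg$ into the nonnegative sum $\sum_i|[\ad_\pg{X_i},A]|^2$ you display, so your identity is right but its stated justification should be amended.
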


The first variation of the moment map (see Lemma \ref{dM}) also simplifies in the naturally reductive case.

\begin{lemma}\label{delta-cas}
If $g$ is naturally reductive with respect to $G$ and $\ggo=\kg\oplus\pg$, then for any $A\in\sym(\pg)$,
$$
d\Mm|_IA=\unm\cas_\pg(A) - A\Mm_{\mu_\pg} - \Mm_{\mu_\pg}A, \qquad\forall A\in\sym(\pg)^K.
$$
\end{lemma}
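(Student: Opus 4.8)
The plan is to deduce everything from Lemma \ref{dM}, which gives $d\Mm|_I=\unm S\circ\delta_{\mu_\pg}^t\delta_{\mu_\pg}$ on $\sym(\pg)^K$, by computing the right-hand side explicitly using natural reductivity.  Fix $A\in\sym(\pg)^K$ and let $B\in\sym(\pg)$ be arbitrary.  Since $B$ is symmetric, $\la S(X),B\ra=\la X,B\ra$, so $\la d\Mm|_I A,B\ra=\unm\la\delta_{\mu_\pg}(A),\delta_{\mu_\pg}(B)\ra=\unm\sum_i\la\ad_{\delta_{\mu_\pg}(A)}X_i,\ad_{\delta_{\mu_\pg}(B)}X_i\ra$, where $\{X_i\}$ is an $\ip$-orthonormal basis of $\pg$.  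As in the computation inside the proof of Lemma \ref{dRc-simple} (with $\mu_\pg$ now in the role of $\mu$), a direct check from the definitions of $\theta$ and $\delta_{\mu_\pg}$ gives $\ad_{\delta_{\mu_\pg}(A)}X_i=\ad_\pg(AX_i)+[\ad_\pg X_i,A]$.  Expanding the sum into four pieces, the goal is to match two of them with $-2(A\Mm_{\mu_\pg}+\Mm_{\mu_\pg}A)$ and $\cas_\pg(A)$, and to kill the two cross terms.

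For the diagonal pieces I would use that $g$ naturally reductive means each $\ad_\pg X$ is skew-symmetric.  Polarizing \eqref{mm-nr} (both $(X,Y)\mapsto\la\Mm_{\mu_\pg}X,Y\ra$ and $(X,Y)\mapsto\la\ad_\pg X,\ad_\pg Y\ra$ are symmetric bilinear forms) gives $\la\Mm_{\mu_\pg}X,Y\ra=-\unc\la\ad_\pg X,\ad_\pg Y\ra$, and then expanding $AX_i$, $BX_i$ in the basis $\{X_i\}$ yields $\sum_i\la\ad_\pg(AX_i),\ad_\pg(BX_i)\ra=-4\tr(\Mm_{\mu_\pg}AB)=-2\la A\Mm_{\mu_\pg}+\Mm_{\mu_\pg}A,B\ra$ (using that the trace of a product of three symmetric operators is unchanged under reversal).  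For the other diagonal piece, skew-symmetry of $\ad_\pg X_i$ makes $[\ad_\pg X_i,A]$ and $[\ad_\pg X_i,B]$ symmetric, and, recalling the definition \eqref{Cp-def} of $\cas_\pg$ and moving one factor $\ad_\pg X_i$ across the inner product, $\sum_i\la[\ad_\pg X_i,A],[\ad_\pg X_i,B]\ra=\la\cas_\pg(A),B\ra$.

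The one point that needs a small trick is the vanishing of the cross terms.  Using skew-symmetry of $\ad_\pg X_i$ once more, a few lines give $\la\ad_\pg X_j,[\ad_\pg X_i,B]\ra=\la[\ad_\pg X_j,\ad_\pg X_i],B\ra$ (note that $[\ad_\pg X_j,\ad_\pg X_i]$, a commutator of skew-symmetric operators, is symmetric, so this is a genuine inner product).  Writing $AX_i=\sum_j\la AX_i,X_j\ra X_j$ then gives $\sum_i\la\ad_\pg(AX_i),[\ad_\pg X_i,B]\ra=\sum_{i,j}\la AX_i,X_j\ra\,\la[\ad_\pg X_j,\ad_\pg X_i],B\ra$, and here the coefficient $\la AX_i,X_j\ra$ is symmetric in $(i,j)$ while $[\ad_\pg X_j,\ad_\pg X_i]$ is antisymmetric in $(i,j)$, so the sum vanishes; the second cross term is disposed of in exactly the same way with $B$ in place of $A$.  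Assembling the pieces gives $\la d\Mm|_I A,B\ra=\la\unm\cas_\pg(A)-A\Mm_{\mu_\pg}-\Mm_{\mu_\pg}A,B\ra$ for every $B\in\sym(\pg)$; since the inner product is nondegenerate on $\sym(\pg)$ and both $d\Mm|_I A$ (by Remark \ref{dM-rem}) and $\unm\cas_\pg(A)-A\Mm_{\mu_\pg}-\Mm_{\mu_\pg}A$ are symmetric operators, the stated formula follows.  I do not expect any serious obstacle; the only care needed is bookkeeping of transposes and the repeated use of the fact that the commutator of a skew-symmetric and a symmetric operator is symmetric, which is what legitimizes the trace manipulations and the symmetric-versus-antisymmetric cancellation.
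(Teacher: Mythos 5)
Your proof is correct and follows essentially the same route as the paper's: both start from Lemma \ref{dM}, expand $\ad_{\delta_{\mu_\pg}(A)}X_i=\ad_\pg(AX_i)+[\ad_\pg{X_i},A]$, match the two squared terms with $-4\tr(\Mm_{\mu_\pg}A\,\cdot)$ via \eqref{mm-nr} and with $\la\cas_\pg(A),\cdot\ra$ via \eqref{Cp-def}, and observe that the cross terms vanish; the only difference is that the paper evaluates the quadratic form at $B=A$ and relies on self-adjointness, while you polarize from the outset. One small slip: $[\ad_\pg{X_j},\ad_\pg{X_i}]$, being a commutator of two skew-symmetric operators, is \emph{skew}-symmetric rather than symmetric as you assert --- this does not hurt you (the identity $\la\ad_\pg{X_j},[\ad_\pg{X_i},B]\ra=\tr([\ad_\pg{X_j},\ad_\pg{X_i}]B)$ only needs $[\ad_\pg{X_i},B]$ to be symmetric), and in fact it makes the cross terms vanish term by term, since the trace of a product of a skew-symmetric operator with the symmetric $B$ is already zero, so your symmetric-versus-antisymmetric cancellation over $(i,j)$ is valid but not even needed.
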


\begin{proof}
It follows from \eqref{mm-nr} and \eqref{Cp-def} that
\begin{align*}
\la\delta_{\mu_{\pg}}^t\delta_{\mu_{\pg}}(A),A\ra =& |\delta_{\mu_{\pg}}(A)|^2 =
\sum |\ad_{\delta_{\mu_{\pg}}(A)}{X_i}|^2 =  \sum |\ad_\pg{AX_i} + [\ad_\pg{X_i},A]|^2 \\
=&  \sum |\ad_\pg{AX_i}|^2 + |[\ad_\pg{X_i},A]|^2 +2\tr{\ad_\pg{AX_i}[\ad_\pg{X_i},A]} \\
=& -4\sum\la\Mm_{\mu_\pg}AX_i,AX_i\ra + \la\cas_\pg(A),A\ra.
\end{align*}
Thus by Lemma \ref{dM}, 
$$
\la d\Mm|_IA,A\ra = \unm\la\delta_{\mu_{\pg}}^t\delta_{\mu_{\pg}}(A),A\ra = \unm\la\cas_\pg(A),A\ra - 2\tr{\Mm_{\mu_\pg}A^2}, 
$$
concluding the proof.
\end{proof}

\subsection{Ricci first variation}
In this section, we show that the naturally reductive condition considerably simplifies the formula for the first variation of the Ricci curvature.  Many applications will be given in the next subsection.  

Recall from \eqref{Cp-def} the definition of the operator $\cas_\pg:\sym(\pg)\rightarrow\sym(\pg)$.

\begin{lemma}\label{dRic}
If $g$ is naturally reductive with respect to $M=G/K$ and $\ggo=\kg\oplus\pg$, then
$$
d\overline{\Ricci}|_I = \unm \cas_\pg.
$$
\end{lemma}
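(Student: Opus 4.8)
The strategy is to specialize the general formula for $d\overline{\Ricci}|_I$ from Lemma \ref{dRicmu} to the naturally reductive situation. Since a naturally reductive $G$ is automatically unimodular, we have $H_{\mu_\pg}=0$, so that formula collapses to
$$
d\overline{\Ricci}|_I A = \unm S\circ\delta_{\mu_{\pg}}^t\delta_{\mu_{\pg}}(A) + A\Mm_{\mu_\pg} + \Mm_{\mu_\pg}A, \qquad\forall A\in\sym(\pg)^K.
$$
Alternatively, and more directly, one can write $d\overline{\Ricci}|_I A = 2\, d\Mm|_I A$ using Lemma \ref{dM} together with the identity $\Ricci_\mu = \Mm_{\mu_\pg} - \unm\kil_\mu$ and the fact that the Killing form term is linear-independent of $A$... but in fact the cleanest route is to invoke Lemma \ref{delta-cas}, which I would carry out first.

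First I would recall from \eqref{Ric} that in the naturally reductive case $\overline{\Ricci}(h) = h\Mm_{h\cdot\mu_\pg}h - \unm\kil_\mu$ for $h$ commuting with $\ad_\pg{\pg}$, and more generally from \eqref{Ricb} with $H_{\mu_\pg}=0$ that $\overline{\Ricci}(h) = h\Mm_{h\cdot\mu_\pg}h - \unm\kil_\mu$ holds for all $h\in\sym_+(\pg)^K$. Differentiating at $h(t)=I+tA$ and using that the Killing form term is constant gives
$$
d\overline{\Ricci}|_I A = A\Mm_{\mu_\pg} + \Mm_{\mu_\pg}A + d\Mm|_I A.
$$
Now I substitute the naturally reductive expression for $d\Mm|_I A$ supplied by Lemma \ref{delta-cas}, namely $d\Mm|_I A = \unm\cas_\pg(A) - A\Mm_{\mu_\pg} - \Mm_{\mu_\pg}A$. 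The two terms $A\Mm_{\mu_\pg} + \Mm_{\mu_\pg}A$ cancel exactly against the corresponding terms in $d\Mm|_I A$, leaving $d\overline{\Ricci}|_I A = \unm\cas_\pg(A)$, which is the claim.

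The main thing to be careful about is not a deep obstacle but a bookkeeping point: one must confirm that $\cas_\pg$ indeed preserves $\sym(\pg)^K$, so that the identity makes sense as an equality of operators on $\sym(\pg)^K$; this is immediate since $\ad{\kg}$ commutes with $\ad_\pg{\pg}$ at the Lie-algebra level (by $\Ad(K)$-invariance of $\mu_\pg$) and $\cas_\pg$ is built from $\ad_\pg{X_i}$, so it is $\Ad(K)$-equivariant. One should also note that the formula is manifestly independent of the orthonormal basis $\{X_i\}$ chosen, as is clear from the defining expression \eqref{Cp-def}. Since $\cas_\pg\geq 0$ with kernel $\{A:[A,\ad_\pg\pg]=0\}$, restricting to $\sym(\pg)^K$ this recovers, via Lemma \ref{irred2}, the equivalence between irreducibility of $\pg$ and $\Ker d\overline{\Ricci}|_I=\RR I$, which is the point of the lemma for the subsequent applications.
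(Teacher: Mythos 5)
Your proposal is correct and follows essentially the same route as the paper's primary proof: it combines the general formula of Lemma \ref{dRicmu} (with $H_{\mu_\pg}=0$ by unimodularity), the identification $d\Mm|_I=\unm S\circ\delta_{\mu_\pg}^t\delta_{\mu_\pg}$ from Lemma \ref{dM}, and the naturally reductive simplification of Lemma \ref{delta-cas}, so that the $A\Mm_{\mu_\pg}+\Mm_{\mu_\pg}A$ terms cancel and only $\unm\cas_\pg(A)$ remains. The paper additionally records a second, direct computational proof that avoids the moving-bracket machinery, but your argument matches the first one.
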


\begin{remark}
It follows from Corollary \ref{LL} that in the naturally reductive case, the Lichnerowicz Laplacian is simply given by  
$$
\Delta_L\la A\cdot,\cdot\ra = \unm\la \cas_\pg(A)\cdot,\cdot\ra, \qquad \forall  \la A\cdot,\cdot\ra\in \sym^2(\pg)^K \equiv \sca^2(M)^G. 
$$
\end{remark}

\begin{proof}
According to Lemmas \ref{dRicmu} and \ref{delta-cas} and Remark \ref{dM-rem}, for any $A\in\sym(\pg)^K$, 
$$
d\overline{\Ricci}|_I A = d\Mm|_IA + A\Mm_{\mu_\pg} + \Mm_{\mu_\pg}A 
= \unm\cas_\pg(A),
$$
concluding the proof .  
\end{proof}

\begin{proof}[Alternative proof of Lemma \ref{dRic}]
We now give a direct proof of the lemma without using the moving bracket approach.  Given $A\in\sym(\pg)^K$, we set $h(t):=I+tA$ and make the following computations (recall that $g_{h(t)}=\la h(t)\cdot,h(t)\cdot\ra$): 
\begin{align*}
(d\ricci|_I A)(X,X) =& \left.\ddt\right|_0 \ricci(g_{h(t)})(X,X) \\
=& \left.\ddt\right|_0 \Big( -\unm\sum\la h(t)[X,h(t)^{-1}X_i]_\pg, h(t)[X,h(t)^{-1}X_i]_\pg\ra \\
& +\unc\sum\la h(t)[h(t)^{-1}X_i,h(t)^{-1}X_j]_\pg, h(t)X\ra^2 - \unm\kil(X,X) \Big)\\
=& -\unm\left.\ddt\right|_0\sum\la h(t)[X,h(t)^{-1}X_i]_\pg, h(t)[X,h(t)^{-1}X_i]_\pg\ra \\
& +\unc\left.\ddt\right|_0\sum\la h(t)^{-1}[h(t)^{-1}X_i,h(t)^2X]_\pg, h(t)^{-1}[h(t)^{-1}X_i,h(t)^2X]_\pg\ra \\
=& -\sum\la A[X,X_i]_\pg-[X,AX_i]_\pg, [X,X_i]_\pg\ra \\
& +\unm\sum\la -A[X_i,X]_\pg - [AX_i,X]_\pg + 2[X_i,AX]_\pg, [X_i,X]_\pg\ra \\
=& -\sum\la A[X_i,X]_\pg-[AX_i,X]_\pg, [X_i,X]_\pg\ra \\
& +\unm\sum\la -A[X_i,X]_\pg - [AX_i,X]_\pg + 2[X_i,AX]_\pg, [X_i,X]_\pg\ra \\
=& \sum\la -\tfrac{3}{2}A[X_i,X]_\pg + \unm[AX_i,X]_\pg + [X_i,AX]_\pg, [X_i,X]_\pg\ra. 
\end{align*}
On the other hand,
\begin{align*}
\la\cas_\pg(A)X,X\ra =& -\sum \la [\ad_\pg{X_i},[\ad_\pg{X_i},A]]X,X\ra \\
=& -\sum \la [X_i,[\ad_\pg{X_i},A]]X] - [\ad_\pg{X_i},A][X_i,X]_\pg,X\ra \\
=& 2\sum \la [X_i,AX]_\pg - A[X_i,X]_\pg , [X_i,X]_\pg\ra, 
\end{align*}
therefore,
$$
(d\ricci|_I A)(X,X) = \unm\la\cas_\pg(A)X,X\ra + \sum\la -\unm A[X_i,X]_\pg + \unm[AX_i,X]_\pg, [X_i,X]_\pg\ra,
$$
and the right hand summand equals
$$
-\unm\tr{\ad_\pg{X}A\ad_{\pg}{X}} + \unm\tr{(\ad_\pg{X})^2A} = 0,
$$
which concludes the proof.   
\end{proof}

\subsection{Ricci local invertibility}
The following application of Lemma \ref{dRic} follows from the fact that $\Ker d\ricci|_g \simeq \Ker \cas_\pg|_{\sym(\pg)^K}$ and Lemma \ref{irred2}.  

\begin{corollary}\label{dRic-cor}
A naturally reductive metric $g\in\mca^G$ with respect to $G$ and $\pg$ satisfies that $\Ker d\ricci|_g=\RR g$ if and only if $\pg$ is irreducible.  
\end{corollary}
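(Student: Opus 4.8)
The plan is to transport the statement through the linear identifications of Sections \ref{preli} and \ref{mba2-sec} and then invoke Lemmas \ref{dRic} and \ref{irred2}. I would choose the naturally reductive metric $g$ itself as the background metric, so that $g$ corresponds to the identity operator $I\in\sym_+(\pg)^K$ and the correspondence $\sca^2(M)^G\ni T=\la A\cdot,\cdot\ra\leftrightarrow A\in\sym(\pg)^K$ is a linear isomorphism carrying $\RR g$ onto $\RR I$. By \eqref{ident-dRc}, under this identification $d\ricci|_g$ becomes (a nonzero multiple of) $d\overline{\Ricci}|_I$, and Lemma \ref{dRic} identifies the latter with $\unm\cas_\pg$ on $\sym(\pg)^K$. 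Since multiplying a linear operator by a nonzero scalar does not change its kernel, $\Ker d\ricci|_g$ is thereby identified with $\Ker\cas_\pg|_{\sym(\pg)^K}$.

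Consequently, $\Ker d\ricci|_g=\RR g$ holds if and only if $\Ker\cas_\pg|_{\sym(\pg)^K}=\RR I$ (note that the inclusion $\RR I\subset\Ker\cas_\pg$ is automatic, matching the fact, recalled in Section \ref{RLI-sec}, that $\RR g\subset\Ker d\ricci|_g$, so the real content is the reverse inclusion). The equivalence $\Ker\cas_\pg|_{\sym(\pg)^K}=\RR I\iff\pg$ irreducible is exactly Lemma \ref{irred2}, which follows in turn from the explicit description of $\Ker\cas_\pg|_{\sym(\pg)^K}$ in terms of commutation with $\ad{\kg}|_\pg$ and $\ad_\pg{\pg}$ together with Definition \ref{pirr}. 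Chaining these equivalences gives the claim.

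I do not expect a serious obstacle here: the statement is essentially a repackaging of Lemmas \ref{dRic} and \ref{irred2}. The only point that needs care is to apply Lemma \ref{dRic} at the correct basepoint, which is why $g$ must be taken as the background metric so that $h=I$; everything else is a matter of unwinding the identifications of Section \ref{preli} and observing that scaling an operator preserves its kernel.
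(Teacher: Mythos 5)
Your argument is correct and is exactly the paper's proof: the corollary is obtained by identifying $\Ker d\ricci|_g$ with $\Ker\cas_\pg|_{\sym(\pg)^K}$ via Lemma \ref{dRic} (applied at $h=I$, i.e.\ with $g$ as background metric) and then invoking Lemma \ref{irred2}. Nothing is missing.
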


We summarize in the following theorem the main results obtained in this section on the Ricci local invertibility of naturally reductive metrics.  Recall from \eqref{Minv} and \eqref{Minvt} the open subsets of metrics $\mca^G_{\widetilde{\ricci}}\subset\mca^G_{inv}\subset\mca^G$.  

\begin{theorem}\label{dRic-thm2}
Let $g\in\mca^G$ be a naturally reductive metric with respect to $G$ and $\pg$.    

\begin{enumerate}[{\rm (i)}]
\item If $\scalar(g)\ne 0$ and $\pg$ is irreducible, then $g\in\mca^G_{inv}$. 

\item $g\in\mca^G_{\widetilde{\ricci}}$ if and only if $\scalar(g)\ne 0$ and $\pg$ is irreducible. 

\item If $g\in\mca^G_{inv}$, then $\pg$ is irreducible and $(\widetilde{M},g)$ is de Rham irreducible. 

\item If $\pg$ is irreducible, then the sets $\mca^G_{\widetilde{\ricci}}$ and $\mca^G_{inv}$ are open and dense in $\mca^G$.  
\end{enumerate}
\end{theorem}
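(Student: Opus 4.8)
The plan is to derive all four parts formally from Lemma \ref{dRic} — or rather from its consequence Corollary \ref{dRic-cor} (which combines $d\overline{\Ricci}|_I=\unm\cas_\pg$ with Lemma \ref{irred2}) — together with the general structural results Theorem \ref{RLI-thm} and Lemma \ref{RLI-lem}; since a naturally reductive $g$ forces $G$ to be unimodular, every clause of Theorem \ref{RLI-thm} is at our disposal.  I would prove part (ii) first, as it is the hinge.  By Theorem \ref{RLI-thm}(ii) one has $\mca^G_{\widetilde{\ricci}}=\mca^G_{\ricci}\cap\mca^G_{\scalar}$, so $g\in\mca^G_{\widetilde{\ricci}}$ exactly when $\scalar(g)\ne 0$ and $\Ker d\ricci|_g=\RR g$; by Corollary \ref{dRic-cor} the latter condition is equivalent to irreducibility of $\pg$, and combining the two equivalences gives (ii).

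Part (i) is then immediate: under its hypotheses $g\in\mca^G_{\widetilde{\ricci}}$ by (ii), and $\mca^G_{\widetilde{\ricci}}\subset\mca^G_{inv}$ by Theorem \ref{RLI-thm}(i).  For part (iii), recall from \eqref{dRcort} that $\mca^G_{inv}\subset\mca^G_{\ricci}$; hence $g\in\mca^G_{inv}$ forces $\Ker d\ricci|_g=\RR g$, so $\pg$ is irreducible by Corollary \ref{dRic-cor}, and the de Rham irreducibility of $(\widetilde{M},g)$ then follows from the list of equivalent conditions recorded right after Definition \ref{pirr} (due to \cite{Kst2, DtrZll}).

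Finally, for part (iv): if $\pg$ is irreducible then the given $g$ lies in $\mca^G_{\ricci}$ by Corollary \ref{dRic-cor}, so $\mca^G_{\ricci}$ is nonempty and therefore open and dense by Lemma \ref{RLI-lem}.  I would then pass to $\mca^G_{\widetilde{\ricci}}=\mca^G_{\ricci}\cap\mca^G_{\scalar}$ via Theorem \ref{RLI-thm}(ii): provided $G$ is nonabelian, $\mca^G_{\scalar}$ is open and dense by Lemma \ref{RLI-lem}, so $\mca^G_{\widetilde{\ricci}}$, being an intersection of two open dense subsets, is open and dense; and $\mca^G_{inv}$ is open (Section \ref{RLI-sec}) and contains the dense set $\mca^G_{\widetilde{\ricci}}$, hence is open and dense as well.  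The one point deserving a word of care — and essentially the only place where the argument is not pure bookkeeping — is the exclusion of abelian $G$: if $G$ were abelian then $\kg=0$ (as $\kg$ contains no ideal of $\ggo$) and $\ad_\pg\pg=0$, so the operator family $\ad{\kg}|_\pg+\ad_\pg\pg$ is trivial and irreducibility of $\pg$ would force $\dim M=1$, the degenerate case in which $\mca^G$ is a single ray, $\scalar(g)=0$, and the statement is vacuous.  Once this is set aside, the proof is complete: the real analytic content is entirely contained in Lemma \ref{dRic} and Theorem \ref{RLI-thm}, and I expect no genuine obstacle beyond isolating this degeneracy.
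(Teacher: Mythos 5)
Your proof is correct and follows essentially the same route as the paper's: Corollary \ref{dRic-cor} combined with Theorem \ref{RLI-thm}, \eqref{dRcort} and Lemma \ref{RLI-lem}, in the same order (ii) $\Rightarrow$ (i), then (iii), then (iv). Your closing remark isolating the abelian (one-dimensional) degeneracy in part (iv) is a point the paper silently glosses over, but it does not alter the argument.
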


\begin{proof}
We will use Corollary \ref{dRic-cor} in each of the following arguments.  Part (ii) follows from Theorem \ref{RLI-thm}, (ii) and part (ii) and Theorem \ref{RLI-thm}, (i) implies part (i).  On the other hand, part (iii) follows from \eqref{dRcort} and part (iv) follows from Theorem \ref{RLI-thm}, (iv), concluding the proof.    
\end{proof}

\begin{theorem}\label{cor-dense}
Let $M=G/K$ be a homogeneous space and assume that $M$ admits a naturally reductive metric with respect to $G$ (e.g. if $G$ is compact).  If $\kg$ is $\ggo$-indecomposable, then $\mca^G_{inv}$ is open and dense in $\mca^G$.  
\end{theorem}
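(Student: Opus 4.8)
The plan is to reduce the general statement to the pervasive irreducible case already handled, using the structural results of Kostant collected above. First I would invoke Corollary \ref{comp-irred}: since $M=G/K$ admits a naturally reductive metric with respect to $G$ (automatic when $G$ is compact, by taking any normal metric) and $\kg$ is $\ggo$-indecomposable, there exists a naturally reductive metric $g_0\in\mca^G$ with respect to $G$ and a reductive complement $\pg$ that is both pervasive and irreducible. This is the crucial bridge: $\ggo$-indecomposability of $\kg$ is exactly what Proposition \ref{pirred-prop2} converts into irreducibility of a pervasive $\pg$.

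Next I would apply Theorem \ref{dRic-thm2}, part (iv), to this metric $g_0$: because $g_0$ is naturally reductive with respect to $G$ and the irreducible reductive complement $\pg$, the set $\mca^G_{inv}$ (as well as $\mca^G_{\widetilde{\ricci}}$) is open and dense in $\mca^G$. That is the entire content of the theorem. The only genuine point to check is that the hypothesis of part (iv) of Theorem \ref{dRic-thm2} is met, namely that some $g\in\mca^G$ is naturally reductive with respect to $G$ with an \emph{irreducible} $\pg$ — and that is precisely what Corollary \ref{comp-irred} provides. So the proof is essentially a two-line citation: Corollary \ref{comp-irred} furnishes the required metric, and Theorem \ref{dRic-thm2}(iv) finishes.

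I do not anticipate a real obstacle here, since all the work has been front-loaded into the preceding subsections. The one subtlety worth a sentence in the write-up is making explicit that "admits a naturally reductive metric with respect to $G$" together with "$\kg$ is $\ggo$-indecomposable" are exactly the two hypotheses of Corollary \ref{comp-irred}, so nothing extra is needed; and that the compact case is subsumed because a bi-invariant-induced normal metric on $G/K$ is always naturally reductive with respect to $G$. If one wanted to be slightly more self-contained, one could also remark that density of $\mca^G_{inv}$ and openness both follow from part (iv) via Theorem \ref{RLI-thm}, (iv) and Lemma \ref{RLI-lem}, but citing Theorem \ref{dRic-thm2}(iv) directly already packages this.

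\begin{proof}
By Corollary \ref{comp-irred}, the hypotheses (existence of a naturally reductive metric with respect to $G$, and $\ggo$-indecomposability of $\kg$) imply that $M$ admits a naturally reductive metric $g$ with respect to $G$ and a pervasive and irreducible reductive complement $\pg$. Applying Theorem \ref{dRic-thm2}, (iv), to this metric $g$, we conclude that $\mca^G_{inv}$ is open and dense in $\mca^G$.
\end{proof}
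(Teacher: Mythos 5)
Your proposal is correct and follows essentially the same route as the paper: the paper's proof invokes Corollary \ref{perv-exist} and Proposition \ref{pirred-prop2} directly to produce a pervasive irreducible naturally reductive complement and then applies Theorem \ref{dRic-thm2}, (iv), whereas you cite Corollary \ref{comp-irred}, which is exactly the packaged combination of those two results. (Incidentally, the paper's proof refers to part (v) of Theorem \ref{dRic-thm2}, which does not exist; your citation of part (iv) is the correct one.)
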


\begin{remark}
We have shown in Examples \ref{s5s5}, \ref{s7s5} and \ref{s3s1} that the converse assertion does not hold in general.  
\end{remark}

\begin{proof}
It follows from Proposition \ref{perv-exist} that there exists a naturally reductive $g\in\mca^G$ with respect to $G$ and a pervasive $\pg$, which must be irreducible by Proposition \ref{pirred-prop2}.  We therefore conclude from Theorem \ref{dRic-thm2}, (v) that $\mca^G_{inv}$ is open and dense in $\mca^G$. 
\end{proof}

We finally apply these theorems to D'Atri-Ziller metrics.  

\begin{enumerate}[{\small $\bullet$}]
\item According to Example \ref{DZ1}, if $M=H$ is a compact Lie group and $G=H\times K$, any $g\in\mca^G$ is naturally reductive with respect to $G$ and if in addition $\kg$ is $\hg$-indecomposable, then they are all holonomy irreducible.  It follows from the above theorems that in that case, $\mca^G_{\scalar}\subset\mca^G_{inv}$ and $\mca^G=\mca^G_{\ricci}$.  Thus $\RR_+\ricci(\mca^G)\cap U$ is open for any open subset $U\subset\mca^G_{\scalar}$.  In particular, $\RR_+\ricci(\mca^G)\cap\mca^G$ is open. 

\item In the case when $M=H$ is a non-compact simple Lie group and $G=H\times K$ as in Example \ref{DZ2}, $\mca^G$ also consists of irreducible naturally reductive metrics with respect to $G$.   Therefore,  we also obtain that $\mca^G_{\scalar}\subset\mca^G_{inv}$, $\mca^G=\mca^G_{\ricci}$ and $\RR_+\ricci(\mca^G)\cap U$ is open for any open subset $U\subset\mca^G_{\scalar}$. 
\end{enumerate}

\end{document}